\theoremstyle{plain}
\newtheorem{thm}{Theorem}[section]
\newtheorem{cor}[thm]{Corollary}
\newtheorem{lem}[thm]{Lemma}
\theoremstyle{definition}
\newtheorem{defn}[thm]{Definition}
\theoremstyle{plain}
\theoremstyle{problem}
\theoremstyle{plain}
\newtheorem{conj}{Conjecture}
\theoremstyle{plain}
\newtheorem{cla}{Claim}
\theoremstyle{plain}
\theoremstyle{plain}
\begin{document}
\begin{CJK}{GBK}{song}
\newcommand{\song}{\CJKfamily{song}}    
\newcommand{\fs}{\CJKfamily{fs}}        
\newcommand{\kai}{\CJKfamily{kai}}      
\newcommand{\hei}{\CJKfamily{hei}}      
\newcommand{\li}{\CJKfamily{li}}        
\renewcommand\figurename{Fig.}

\begin{center}
{{\huge A solution to Godsil's conjecture on the edge-connectivity of graphs in association schemes  }} \\[18pt]
{\Large Wensheng Sun$^{1}$,  Yujun Yang$^{2}$,    Shou-Jun Xu$^{1,*}$  \footnotetext{*Corresponding author\\ \noindent E-mail addresses: wensheng07002@163.com(W. Sun),  yangyj@yahoo.com(Y. Yang), shjxu@lzu.edu.cn(S.-J. Xu),    }}\\[6pt]
{ \footnotesize  $^{1}$ School of Mathematics and Statistics, Gansu Center for Applied Mathematics, Lanzhou University, Lanzhou, Gansu 730000 China\\
$^{2}$ School of Mathematics and Information Science,Yantai University, Yantai 264005 China}
\end{center}

\vspace{1mm}
\begin{abstract}
A graph $G$ is called equiarboreal if the number of spanning trees containing a given edge in $G$ is independent of the choice of edge. In [Combinatorica 1(2) (1981) 163--167], Godsil proved that any graph which is a colour class in an association scheme is equiarboreal, and further conjectured that the edge-connectivity of a connected graph which is a colour class in an association scheme equals its vertex degree. In this paper, we confirm this long-standing conjecture. More generally, we prove an even stronger result that the edge-connectivity of a connected regular equiarboreal graph equals its degree by combinatorial and electrical network approaches. As a consequence, we show that every connected regular equiarboreal graph on an even number of vertices has a perfect matching.

\noindent {\bf Keywords:} Equiarboreal graph; Edge-connectivity; Association scheme; Spanning tree; Resistance distance; Principle of substitution; \\
\vspace{1mm}
\noindent{\bf AMS Classification: } 05C05, 05C12, 05C40
\end{abstract}

\section{Introduction}
A graph $G$ is called \emph{equiarboreal} if the number of spanning trees containing a given edge in $G$ is independent of the choice of edge. The concept of equiarboreal graph was first introduced by Hell and Mendelsohn, who posed the question: ``Which graphs have the property that the number of spanning trees containing a given edge is independent of the edge?" (see \cite{cdg}).  According to the definition, it follows directly that all trees and edge-transitive graphs are equiarboreal graphs$^{1}$.\footnotetext{$^{1}$ Not every vertex-transitive graph is equiarboreal, for instance, the triangular prism is vertex-transitive but not equiarboreal.} In \cite{cdg}, Godsil proved that any graph which is a colour class in an association scheme (see Definition \ref{def1}) is equiarboreal. In \cite{pfr}, Fraisse and Hell pointed out that equiarboreal graphs can be regarded as graphic matroids in which the family of all bases covers each element the same number of times.  Later, Jakobson and Rivin \cite{dja} proved that a graph is equiarboreal if and only if its weighted spanning tree is maximal in its edge deformation space.  In \cite{jzh}, Zhou, Sun and Bu gave some resistance characterizations of equiarboreal weighted and unweighted graphs, and obtained some new infinite families of equiarboreal graphs. However, it is much more difficult to characterize equiarboreal graphs for general cases.

Let $G$ be a connected graph with vertex set $V(G)$ and edge set $E(G)$. We use $\tau(G)$ to denote the number of spanning trees of $G$. For $u, v \in V(G)$, the \emph{resistance distance} \cite{djk1} between $u$ and $v$, denoted by $\Omega_G(u,v)$, is defined as the net effective resistance between the corresponding nodes in the electrical network constructed from $G$ by replacing each edge of $G$ with a unit resistor. Equivalently, we have $\Omega_G(u,v)=\frac{\tau(G_{uv})}{\tau(G)}$  \cite{bbo,cth}, where $G_{uv}$ is a graph (or multigraph) obtained from $G$ by identifying $u$ and $v$. From this viewpoint,  for an edge $e=uv \in E(G)$, the resistance distance between $u$ and $v$ represents the probability that edge $e$ is in a random spanning tree provided that all spanning trees appear with equal probability. Hence, a connected graph $G$ is equiarboreal if and only if all the resistance distances between the endpoints of each edge in $G$ are equal.  In fact, the resistance distance is closely related to random walks and spanning trees in graphs, and has been extensively studied in mathematical, physical and chemical literature \cite{lcy,liy,jzh1,ywa,mei}.

Association schemes play a unifying role in algebraic combinatorics, appearing both in error-correcting coding theory \cite{pde} and in the study of combinatorial designs, algebraic graph theory and finite group theory \cite{eba,alg}.

\begin{defn}\label{def1}
(Association scheme) Let $X$ be a finite set and $\mathcal{R}=\{R_0,R_1,...,R_n\}$ be a partition of non-empty subsets of $X \times X$. Define the matrix \( A_i \in \mathbb{R}^{|X| \times |X|} \) ($0 \leq i \leq n$) by
\[
(A_i)_{xy} = \begin{cases}
1 & \text{whenever  } (x, y) \in R_i, \\
0 & \text{otherwise}.
\end{cases}
\]
Then \( (X, \mathcal{R}) \) is called an \emph{association scheme} with \( n \) classes if the following conditions hold:\\
(i) $A_0=I$, where $I$ is the identity matrix;\\
(ii) $\sum^{n}_{i=0}A_i =J$, where $J$ is the matrix with each entry equal to one;\\
(iii) $A_{i}=A_i^{T}$ for each $i \in \{0,1,...,n\}$, where superscript $T$ denotes transposition;\\
(iv) $A_iA_j=\sum^{n}_{k=0}p^{k}_{ij}A_k$ for all $i,j \in \{0,1,...,n\}$, where $p^{k}_{ij}$ (intersection numbers) are nonnegative integers.
\end{defn}
From the above, we see that the matrices \( A_0, A_1,..., A_n \) must be linearly independent, and they generate a commutative \((n+1)\)-dimensional algebra, \(\mathcal{A}= \{A_0,A_1,...,A_n\}\), of symmetric matrices with constant diagonal. This algebra is known as the \textit{Bose-Mesner algebra} \cite{rcb}. Furthermore, since each matrix $A_i$ ($1 \leq i \leq n$) is a symmetric $(0, 1)$-matrix with zero diagonal, it can be viewed as the adjacency matrix of a graph $G_i$. In \cite{cdg}, Godsil refers to such a graph as a \emph{colour class} in an association scheme. It is well-known that any graph which is a colour class in an association scheme is regular \cite{rcb}, and any distance-regular graph is a colour class in an association scheme \cite{aeb3}. Moreover, Godsil established the following relationship.
\begin{thm}\cite{cdg}\label{thm1.2}
Any graph which is a colour class in an association scheme is equiarboreal.
\end{thm}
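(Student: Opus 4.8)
The plan is to exploit the equivalence already recorded above: a connected graph is equiarboreal precisely when the resistance distance $\Omega_{G}(x,y)$ takes the same value on every edge $xy$. If a colour class $G_i$ happens to be disconnected it has no spanning tree at all, so the property holds vacuously; thus I would assume $G_i$ connected, write $k$ for its valency and $L=kI-A_i$ for its Laplacian, and invoke the standard closed form
\[
\Omega_{G}(x,y)=(e_x-e_y)^{T}L^{+}(e_x-e_y)=(L^{+})_{xx}+(L^{+})_{yy}-2(L^{+})_{xy},
\]
where $L^{+}$ is the Moore--Penrose pseudoinverse of $L$ and $e_x$ is the standard unit vector at the vertex $x$. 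Everything then reduces to understanding the entries of $L^{+}$.

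The decisive point is that $L^{+}$ again lies in the Bose--Mesner algebra $\mathcal{A}$. Since $L=kI-A_i\in\mathcal{A}$, expanding $L$ in the primitive idempotents $E_0,\dots,E_n$ of the scheme as $L=\sum_{j}\theta_jE_j$ (with $E_0=|X|^{-1}J$ and $\theta_0=0$, while $\theta_j>0$ for $j\neq 0$ because $G_i$ is connected, so that $\ker L=\operatorname{im}(E_0)$) gives $L^{+}=\sum_{j:\,\theta_j\neq 0}\theta_j^{-1}E_j\in\mathcal{A}$. As $\mathcal{A}=\operatorname{span}\{A_0,\dots,A_n\}$, I may then write $L^{+}=\sum_{j=0}^{n}c_jA_j$ for suitable real scalars $c_0,\dots,c_n$.

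Reading off entries finishes the argument. Because $A_0=I$ and each $A_j$ with $j\geq 1$ has zero diagonal, $(L^{+})_{xx}=c_0$ for every vertex $x$; and if $x\neq y$ with $(x,y)\in R_j$, then $(L^{+})_{xy}=c_j$. An edge $xy$ of $G_i$ is, by the definition of a colour class, a pair with $(x,y)\in R_i$, so $\Omega_{G}(x,y)=2c_0-2c_i$, a quantity independent of the chosen edge; hence $G_i$ is equiarboreal.

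The only steps that need a little care are the justification that $L^{+}\in\mathcal{A}$ (immediate from commutativity and the fact that $\mathcal{A}$ is spanned by the idempotents $E_j$, together with positive semidefiniteness of $L$) and the bookkeeping that the diagonal coefficient $c_0$ is the same for all vertices while the coefficient $c_i$ is the same for all edges of $G_i$ — both are direct consequences of conditions (i)--(iv) in Definition \ref{def1}. I do not anticipate a genuine obstacle here; a purely combinatorial variant is also available, replacing $L^{+}$ by cofactors of $L$ through the all-minors matrix-tree theorem and using that the adjugate of a matrix in $\mathcal{A}$ remains in $\mathcal{A}$, but the spectral argument above is the most transparent.
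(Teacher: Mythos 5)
The paper does not prove this statement: it is imported verbatim from Godsil \cite{cdg}, so there is no in-paper argument to compare yours against. Your proof is correct and is essentially the classical one: $L=kI-A_i$ lies in the Bose--Mesner algebra and, by connectivity, is positive semidefinite with kernel exactly $\operatorname{im}(E_0)$, so $L^{+}=\sum_{j\neq 0}\theta_j^{-1}E_j$ again lies in the algebra; writing $L^{+}=\sum_j c_jA_j$ gives constant diagonal $c_0$ and off-diagonal entries depending only on the relation containing the pair, whence $\Omega_{G_i}(x,y)=2c_0-2c_i$ on every edge, and the equivalence between constant edge resistance and equiarboreality (recorded in the paper's introduction via $\Omega_G(u,v)=\tau(G_{uv})/\tau(G)$) finishes the argument. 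This is the same mechanism as Godsil's original proof, which works with a generalized inverse (cofactors) of the Laplacian inside the algebra rather than the spectral pseudoinverse. Two minor points worth tightening: the disconnected case is trivial rather than vacuous (the count of spanning trees through each edge is $0$, hence still edge-independent), and you should state explicitly that $\theta_j=k-\lambda_j^{(i)}>0$ for $j\neq 0$ because the eigenvalue $k$ of the connected $k$-regular graph $G_i$ is simple with eigenspace $\operatorname{im}(E_0)$ --- a fact your decomposition of $L^{+}$ silently relies on.
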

A nontrivial graph $G$ is \emph{$k$-edge-connected} if the removal of any $k-1$ edges does not disconnect it. The \emph{edge-connectivity} of $G$, denoted $\lambda(G)$, is the maximum value of $k$ for which $G$ is $k$-edge-connected.  Then Godsil gave a lower bound on the edge-connectivity of equiarboreal graphs and further proposed the following conjecture.
\begin{thm}\cite{cdg}\label{tm1.3}
Let $G$ be a connected equiarboreal graph on $n$ vertices with $m$ edges. Then
$$\lambda(G) \geq \dfrac{m}{n-1}.$$
\end{thm}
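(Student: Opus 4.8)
\textbf{Proof proposal for Theorem \ref{tm1.3}.}

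The plan is to combine a simple double-counting identity for spanning trees with the elementary fact that every edge cut meets every spanning tree.

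First I would fix notation: since $G$ is equiarboreal, let $t$ be the common number of spanning trees of $G$ that contain any prescribed edge. Counting the incidence pairs $(e,T)$ with $e\in E(T)$ and $T$ a spanning tree of $G$ in two ways — by edges (each of the $m$ edges lies in exactly $t$ trees) and by trees (each of the $\tau(G)$ spanning trees has exactly $n-1$ edges) — gives
\[
m\,t \;=\; (n-1)\,\tau(G), \qquad\text{so}\qquad t=\frac{(n-1)\tau(G)}{m}.
\]
Equivalently, in the resistance-distance language recalled in the introduction, every edge $uv\in E(G)$ satisfies $\Omega_G(u,v)=t/\tau(G)=\frac{n-1}{m}$.

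Next, let $F\subseteq E(G)$ be a minimum edge cut, so that $|F|=\lambda(G)$ and $G-F$ is disconnected. I claim that every spanning tree $T$ of $G$ contains at least one edge of $F$: indeed $T-F$ is a spanning subgraph of the disconnected graph $G-F$, hence is itself disconnected, so $T$ cannot be edge-disjoint from $F$. Counting incidences between spanning trees and the edges of $F$ therefore yields
\[
\tau(G)\;\le\;\sum_{e\in F}\#\{\,T : e\in E(T)\,\}\;=\;|F|\,t\;=\;\lambda(G)\cdot\frac{(n-1)\tau(G)}{m},
\]
and dividing by $\tau(G)>0$ gives $\lambda(G)\ge \dfrac{m}{n-1}$, as desired. (An equivalent electrical-network derivation: pick $uv=e\in F$, short all vertices on $u$'s side of the cut to a node $a$ and all vertices on $v$'s side to a node $b$; by Rayleigh's short-circuit monotonicity this only decreases effective resistance, and in the shorted network $a$ and $b$ are joined by exactly $|F|$ parallel unit resistors, so $\frac{1}{\lambda(G)}=\frac{1}{|F|}\le \Omega_G(u,v)=\frac{n-1}{m}$, the same bound.)

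I do not expect a genuine obstacle here: the only points needing care are the double-counting identity $mt=(n-1)\tau(G)$ and the observation that an edge cut intersects every spanning tree. The substantive work of the paper is the sharpening in the regular case (showing $\lambda(G)$ actually equals the vertex degree), not this general lower bound.
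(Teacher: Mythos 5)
Your proof is correct. The paper does not reprove this statement --- it is quoted from Godsil \cite{cdg} --- and your argument (the double count $mt=(n-1)\tau(G)$ together with the fact that every spanning tree meets every edge cut, so $\tau(G)\le \lambda(G)\,t$) is precisely the standard derivation of Godsil's bound; the electrical restatement you give via Foster's formula (Lemma \ref{lem2.2}) and Rayleigh monotonicity, $\frac{1}{\lambda(G)}\le\Omega_G(u,v)=\frac{n-1}{m}$, is the same inequality in the language the rest of the paper actually uses.
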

\begin{conj}\cite{cdg}\label{coj1}
Let $G$ be a connected graph which is a colour class in an association scheme. Then its edge-connectivity $\lambda(G)$ equals  its degree.
\end{conj}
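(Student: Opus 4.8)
The plan is to deduce Conjecture~\ref{coj1} from the stronger assertion that every connected regular equiarboreal graph has edge-connectivity equal to its degree; since a colour class in an association scheme is regular and, by Theorem~\ref{thm1.2}, equiarboreal, this reduction is immediate. So let $G$ be a connected $d$-regular equiarboreal graph on $n$ vertices with $m=nd/2$ edges. Isolating one vertex gives $\lambda(G)\le d$, so the entire problem is to prove $\lambda(G)\ge d$. The quantitative input is that $\Omega_G(u,v)$, being the probability that a uniform random spanning tree uses an edge $uv$, satisfies $\sum_{uv\in E(G)}\Omega_G(u,v)=n-1$ (every spanning tree has $n-1$ edges); combined with equiarboreality this forces the resistance across every edge to equal the explicit constant $c:=\frac{n-1}{m}=\frac{2(n-1)}{nd}$. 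Everything below is driven by this value of $c$ together with $d$-regularity.

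Assume for contradiction that $\lambda(G)\le d-1$ and fix a minimum edge cut $[S,\bar S]$ with $|S|\le n/2$. A standard argument shows $G[S]$ and $G[\bar S]$ are connected, since otherwise splitting the disconnected side yields a strictly smaller cut. Put $s=|S|$; counting the degrees of the vertices of $S$ gives $ds=2e(S)+\lambda(G)$, and since $G$ is simple $e(S)\le\binom{s}{2}$, whence
\[
\lambda(G)\ \ge\ ds-s(s-1)\ =\ s(d-s+1),
\]
which is at least $d$ for every $s$ with $1\le s\le d$. Hence $s\ge d+1$, so also $|\bar S|\ge s\ge d+1$ and $n\ge 2d+2$; moreover, since fewer than $s$ vertices of $S$ and fewer than $|\bar S|$ vertices of $\bar S$ are incident to the cut, each of $G[S]$ and $G[\bar S]$ contains a vertex all of whose neighbours lie on its own side. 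This regime --- a small cut between two genuinely large, internally connected sides --- is exactly what the degree count cannot handle.

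For it I would pass to the electrical-network model and the principle of substitution. Regard $G$ as a network of unit resistors, fix a cut edge $e=xy$ with $x\in S$ and $y\in\bar S$, and consider the unit current flow from $x$ to $y$, whose dissipated energy is $\Omega_G(x,y)=c$. The subnetwork carried by $\bar S$ together with the $\lambda(G)$ cut edges is attached to $G[S]$ only at the at most $\lambda(G)$ vertices of $S$ incident to the cut, so by the principle of substitution it may be replaced by an electrically equivalent network on those terminals without altering any resistance inside $G[S]$; the same applies with the roles of $S$ and $\bar S$ reversed. The goal is to exploit such substitutions --- either to transform $G$ into a strictly smaller connected $d$-regular equiarboreal graph and induct on $n$, or to compare $\Omega_G(x,y)$ directly with the resistance of a suitable reduced network --- and thereby to show that a cut of size $\lambda(G)<d$ separating two sides each of size at least $d+1$ forces $\Omega_G(x,y)>c$, contradicting equiarboreality.

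This last step is where I expect the genuine difficulty to lie. The reason is that every soft estimate available here --- Foster's identity applied to $G[S]$, Rayleigh monotonicity under edge deletion and vertex contraction, and Cauchy--Schwarz on the currents through the cut edges --- only reproduces Godsil's inequality $\lambda(G)\ge m/(n-1)$ from Theorem~\ref{tm1.3}, which is roughly $d/2$; none of them registers the internal geometry of $G[S]$ and $G[\bar S]$ needed to recover the missing factor of two. The substitution step must therefore be arranged so that the presence of at least $d+1$ vertices in $G[S]$ reachable from the rest of the graph through at most $d-1$ edges genuinely costs extra resistance. Granting the hard case, combining it with the degree-counting case yields $\lambda(G)=d$ for every connected regular equiarboreal graph, and in particular Conjecture~\ref{coj1}.
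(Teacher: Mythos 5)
Your reduction of the conjecture to the statement about regular equiarboreal graphs is exactly the paper's route: a colour class is regular, Theorem~\ref{thm1.2} makes it equiarboreal, and Lemma~\ref{lem2.2} pins the common edge resistance at $\frac{n-1}{m}<\frac{2}{d}$. Your degree count showing that a cut of size at most $d-1$ forces both sides to have at least $d+1$ vertices is also sound. But the proposal stops precisely where the theorem begins: you state that the remaining case ``is where I expect the genuine difficulty to lie'' and conclude only ``granting the hard case.'' That hard case is the entire content of Theorem~\ref{swink}, so as written this is a correct reduction plus an unproved assertion, not a proof.

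The concrete idea you are missing is that the paper never argues through the sizes of $S$ and $\bar S$ at all. Instead it analyses the bipartite graph $G[C]$ induced by the cut edges: for a cut edge $u_1v_1$ it collapses all other cut-incident vertices on each side to single vertices $u_2,v_2$ (legitimate by Lemma~\ref{lem3.1} and Rayleigh monotonicity), reduces the whole graph by the principle of substitution to a weighted $K_4$ whose unknown internal resistances are bounded below via Lemma~\ref{lem2.5}, and solves Kirchhoff's equations exactly to get the lower bound $\Omega(G)\ge\mathcal{F}(d_{G[C]}(u_1),d_{G[C]}(v_1))$ of Theorem~\ref{tm3.66}. Comparing $\mathcal{F}$ with $\frac{2}{k}$ yields structural prohibitions on $G[C]$ (no $K_2$ components, no degree-one vertices, the ``strongly $S_{x,y}$-free'' conditions), which for $k\ge 8$ force $|C|\ge 2\lfloor k-\sqrt{k}\rfloor-2\ge k$; the cases $3\le k\le 7$ need separate ad hoc network reductions (weighted $4$-cycles, the $K_{2,2}$ and $K_{2,3}$ transformations, and the parity lemma \ref{lem3.3}). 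It is exactly this exact computation on a collapsed four-terminal network, rather than any soft Foster/Rayleigh/Cauchy--Schwarz estimate, that recovers the factor of two you correctly identified as out of reach; without it the argument does not close.
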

Although Godsil's conjecture has been proposed for over forty years,  the progress on the conjecture has been slow. As early as 1972, Plesn\'{i}k \cite{jpl} showed that the edge-connectivity of a strongly regular graph is equal to its degree. In 1985, Brouwer and Mesner \cite{aeb4} proved that the vertex-connectivity of a strongly regular graph is equal to its degree and the only disconnecting sets of minimum order are the neighborhoods of its vertices. In 2005, Brouwer and Haemers \cite{aeb1} demonstrated that the edge-connectivity of a distance-regular graph equals its degree. Subsequently, Brouwer  and Koolen \cite{aeb2} proved the stronger result that the vertex-connectivity of a distance-regular graph equals its degree. These results indicate that Godsil's conjecture holds for distance-regular graphs. In 2018,  McGinnis \cite{mmc} further proved its validity  for the distance-$j$ graphs of the twisted Grassmann graphs.  However, Godsil's conjecture has remained open in the general case \cite{smc}. In this paper, we confirm this long-standing conjecture. In doing so, we study the edge-connectivity of connected regular equiarboreal graphs using combinatorial and electrical network approaches, and the main result of the paper is as follows.
\begin{thm}\label{swink}
Let $G$ be a connected $k$-regular equiarboreal graph with $k \geq 1$. Then $\lambda(G)=k$. In particular, if $k \geq 11$, the only edge cut of $k$ edges are the sets of edges incident with a single vertex.
\end{thm}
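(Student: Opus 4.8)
The plan is to dispose of the easy inequality first and then spend all the effort on the lower bound and the classification. Since $G$ is connected with $k\ge 1$, it has at least two vertices, so deleting the $k$ edges at any fixed vertex disconnects $G$; hence $\lambda(G)\le k$. For the reverse inequality I would start by extracting the two consequences of the hypotheses that I expect to drive everything. Because $G$ is $k$-regular it has $m=nk/2$ edges, and because $G$ is equiarboreal every edge lies in the same number $N$ of spanning trees; since a spanning tree has $n-1$ edges, $mN=(n-1)\tau(G)$, so every edge $xy$ has $\Omega_G(x,y)=N/\tau(G)=\tfrac{2(n-1)}{nk}=:\rho$, and in particular $\rho<2/k$. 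I would also record that a minimum edge cut $F$ is minimal, so $G-F$ has exactly two components $C_1,C_2$, each inducing a connected subgraph; counting edge-endpoints inside a side with $s=|C_1|$ vertices gives $|E(G[C_1])|=\tfrac{ks-\lambda}{2}$, and comparing with $\binom{s}{2}$ forces $s\ge k$ whenever $\lambda\le k$ (indeed $s\ge k+1$ when $\lambda<k$), with $G[C_1]=K_k$ in the boundary case $s=k,\ \lambda=k$. Thus any cut of size at most $k$ that is not the set of edges at a single vertex has both sides of size $\ge k$, each ``nearly $k$-regular'' in that its internal degree sum misses $ks$ by only $\lambda$.

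The combinatorial core I would try to run is a refinement of Godsil's counting in Theorem~\ref{tm1.3}. Classify spanning trees of $G$ by the number $j$ of cut edges they use, so $\tau(G)=\sum_{j\ge 1}T_j$; a tree using exactly one cut edge restricts to a spanning tree of $G[C_1]$, a spanning tree of $G[C_2]$, and a choice of cut edge, so $T_1=\lambda\,\tau(G[C_1])\,\tau(G[C_2])$, and more generally $T_j$ is a sum over $j$-component spanning forests of $G[C_1]$ and $G[C_2]$ of products of ``cut-edge'' weights. On the other hand the expected number of cut edges in a uniform random spanning tree is $\sum_{e\in F}\Pr[e\in T]=\lambda\rho$, which is strictly less than $2$ precisely when $\lambda<k$ (since $\lambda\rho<k\rho=2-2/n$); feeding $\mathbb{E}|T\cap F|\ge 1+\Pr[\,|T\cap F|\ge 2\,]=2-T_1/\tau(G)$ into this yields an upper bound of the shape $\tau(G)<\tfrac{n\lambda}{2}\,\tau(G[C_1])\,\tau(G[C_2])$, together with the trivial lower bound $\tau(G)\ge T_1=\lambda\,\tau(G[C_1])\,\tau(G[C_2])$. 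The remaining job is to produce, from the near-$k$-regularity of the two sides and the uniform-resistance condition, a strong enough lower estimate for $T_2,T_3,\dots$ (equivalently, for $\tau(G)/(\tau(G[C_1])\tau(G[C_2]))$) to contradict this upper bound when $\lambda<k$.

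For that final squeeze I would switch to the electrical picture, which is also where the constant $11$ should appear. Taking an edge $xy$ with both endpoints interior to $C_1$ and driving a unit current between them, the dissipated energy is $\Omega_G(x,y)=\rho$; splitting it as $\rho=\mathcal E_{C_1}+\mathcal E_F+\mathcal E_{C_2}$, Cauchy--Schwarz over the $\lambda$ cut edges controls $\mathcal E_F$ while the fact that the bulk of the current stays inside the almost-regular $G[C_1]$ bounds $\mathcal E_{C_1}$ from below; doing the bookkeeping carefully (and treating separately the degenerate case where one side has a single boundary vertex) should force $\lambda\ge k$, and in the remaining case $\lambda=k$ it should yield an inequality relating $k$ and $s$ that fails once $k\ge 11$ unless $F$ is a vertex neighbourhood. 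An alternative, cleaner way to organize the endgame is to invoke the principle of substitution, replacing $G[C_1]$ and $G[C_2]$ by their equivalent resistor networks on the boundary vertices: this collapses the whole configuration to a weighted network on at most $2\lambda$ terminals in which the $\le 2\lambda$ surviving genuine edges still have resistance exactly $\rho$, reducing matters to a bounded amount of case analysis.

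The step I expect to be the genuine obstacle is exactly this last quantitative improvement over Godsil's bound. Every soft tool in sight --- shorting a side to a point, Foster's identity, the Nash--Williams series-of-cuts inequality, and the bare inequality $\sum_{e\in F}N_e=\lambda N\ge\tau(G)$ --- is tight at $G=K_{k+1}$ and therefore only delivers $\lambda>k/2$; extracting the extra factor of roughly $2$ seems to require using the uniform-resistance condition and the near-regularity of the two sides \emph{simultaneously}, and carefully quantifying the energy (equivalently, the extra spanning trees with several cut edges) that cannot be avoided. Once $\lambda(G)=k$ is in hand, the perfect-matching statement is immediate from the classical fact that a $(k-1)$-edge-connected $k$-regular graph on an even number of vertices has a perfect matching.
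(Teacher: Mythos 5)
Your setup is sound and, up to the point where the real work begins, it matches the paper's: the trivial direction $\lambda(G)\le k$, the identity $\Omega_G(x,y)=\tfrac{2(n-1)}{nk}<\tfrac{2}{k}$ for every edge, the observation that Godsil's bound and every ``soft'' shorting argument saturate at $K_{k+1}$ and only give $\lambda>k/2$, and the idea of using the principle of substitution to collapse the two sides of a small cut onto their boundary terminals. But the proposal stops exactly where the proof has to start. You write that the quantitative improvement over Godsil's bound ``is the genuine obstacle'' and that the bookkeeping ``should force $\lambda\ge k$'' --- that obstacle is the theorem, and no version of the estimate is actually carried out. Concretely, three things are missing. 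First, the paper's central quantitative lemma (Theorem~\ref{tm3.66}): for an edge $u_1v_1$ of the cut with degrees $x=d_{G[C]}(u_1)$, $y=d_{G[C]}(v_1)$ in the cut graph, identifying the remaining boundary vertices on each side and bounding the internal two-terminal resistances by $\tfrac{1}{k-x}$ and $\tfrac{1}{k-y}$ reduces everything to an explicit weighted $K_4$, whose solution gives $\Omega(G)\ge\mathcal F(x,y)$ for an explicit rational function $\mathcal F$; comparing with $\Omega(G)<2/k$ is what turns the resistance condition into \emph{local degree constraints on $G[C]$}. Your sketch instead drives current between two vertices interior to $C_1$ and splits the energy across the cut; for such a pair the current need hardly cross the cut at all, so it is not clear any contradiction is available from that choice of terminals, and no inequality is derived. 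Second, the structural consequences that make the endgame work (no $K_2$ components, no degree-one vertices in $G[C]$, strong $S_{x,y}$-freeness for $x+y\le k-\sqrt k-2$) and the resulting count $|C|\ge 2\lfloor k-\sqrt k\rfloor-2$, which exceeds $k-1$ for $k\ge 8$ and exceeds $k$ for $k\ge 11$ --- this is where the constant $11$ actually comes from, whereas in your plan it is only promised. Third, the asymptotic argument fails for $k\le 7$, and the paper needs separate ad hoc analyses there (parity of $\lambda$ for even $k$, explicit enumeration of the bipartite cut graphs with at most $4$ edges, a $K_{2,3}$ configuration for $k=7$ handled via the complete-bipartite-to-double-star transformation); your ``bounded amount of case analysis'' does not engage with any of these cases. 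The spanning-tree expectation $\mathbb E|T\cap F|=\lambda\rho$ and the refinement via $T_1$ are correct as stated but, as you note, only reproduce $\lambda>k/2$ without a lower bound on $\Pr[|T\cap F|\ge 2]$, which you do not supply. In short: the framework is right, but the proof is not there.
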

Combining  Theorems \ref{thm1.2} and \ref{swink} , we confirm Conjecture \ref{coj1} in the general case.

\begin{thm}
Let $G$ be a connected graph which is a colour class in an association scheme. Then its edge-connectivity $\lambda(G)$ equals  its degree.
\end{thm}

The remainder of this paper is structured as follows. In Section \ref{section2}, we introduce some basic concepts of graph theory, and recall useful tools and properties from electrical network theory. In Section \ref{section3}, we present the main results.  Finally, we conclude the paper in Section \ref{section4}.
\section{Preliminaries}\label{section2}
In this section,  we first introduce some basic notation that will be used later. For a graph $G$, we denote its vertex set and edge set by $V(G)$ and $E(G)$, respectively. The order and size of a graph $G$ are defined as $|V (G)|$ and $|E(G)|$, respectively.  For vertex $u\in V(G)$, we use $d_G(u)$ to stand for the degree of $u$ and $N_G(u)$ to denote the set of neighbors of $u$ in $V(G)$, so that $d_G(u) = |N_G(u)|$. The closed neighborhood of $u$ in $G$, denoted $N_G[u]$, is defined as $N_G(u) \cup \{u\}$. A subgraph $H$ of $G$ is called a spanning subgraph if $V(H) = V(G)$. For a vertex set $U \subseteq V(G)$, we use $G[U]$ to denote the subgraph of $G$ induced by $U$. For integers $n,m \geq 1$, we use $K_{n}$, $K_{m,n}$, $C_{n}$ and $S_{n}$ to denote the complete graph, the complete bipartite graph, the cycle and the star of order $n$, respectively. In particular, we use $S_{m,n}$ to denote the double star, which is the graph obtained by connecting the central vertices of two stars $S_{m+1}$ and $S_{n+1}$.

We now present the background on the tools and properties from electrical network theory that are used in this paper. An electrical network can be regarded as a weighted graph in which the weights are the resistance
of the respective edges. Therefore, it is not necessary to distinguish between electrical networks and the corresponding graphs. For convenience,  we use the notation $R_{G}(u, v)$ for the resistance (or weight) on edge $uv \in E(G)$. If each edge in a weighted graph $G$ has weight 1, we simply refer to $G$ as a graph.  We begin with the following lemma, which is known as Foster's first formula in electrical network theory.
\begin{lem}\cite{rfo}\label{lem2.1}
Let $G$ be a connected graph. Then
$$\sum_{uv\in E(G)}\Omega_G(u,v)=|V(G)|-1.$$
\end{lem}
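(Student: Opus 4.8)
The plan is to derive Foster's formula directly from the probabilistic interpretation of resistance distance already recorded in the introduction, namely that for an edge $e = uv \in E(G)$ the quantity $\Omega_G(u,v) = \frac{\tau(G_{uv})}{\tau(G)}$ is exactly the probability that $e$ appears in a spanning tree chosen uniformly at random among all $\tau(G)$ spanning trees of $G$. Once this identification is in hand, the formula becomes a one-line consequence of linearity of expectation, and indeed this is the standard route to Foster's theorem.

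First I would fix a spanning tree $T$ sampled uniformly at random, and for each edge $e = uv \in E(G)$ introduce the indicator random variable $\mathbf{1}[e \in T]$. By the interpretation above, $\mathbb{E}\big[\mathbf{1}[e \in T]\big] = \Pr[e \in T] = \Omega_G(u,v)$. Summing over all edges and exchanging the finite sum with the expectation gives
$$\sum_{uv \in E(G)} \Omega_G(u,v) = \sum_{uv \in E(G)} \mathbb{E}\big[\mathbf{1}[uv \in T]\big] = \mathbb{E}\Big[\sum_{uv \in E(G)} \mathbf{1}[uv \in T]\Big] = \mathbb{E}\big[|E(T)|\big].$$
Since every spanning tree of a connected graph on $|V(G)|$ vertices has exactly $|V(G)| - 1$ edges, the random variable $|E(T)|$ is almost surely equal to $|V(G)| - 1$, so its expectation is $|V(G)| - 1$, which is the asserted identity.

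The argument has no genuine obstacle: the only step requiring care is the validity of $\Omega_G(u,v) = \frac{\tau(G_{uv})}{\tau(G)}$ for an edge $uv$, but this is precisely the formula cited in the introduction and attributed to earlier work, so it may be invoked directly. As an independent cross-check one could argue spectrally instead. Writing $L$ for the Laplacian of $G$, $L^{+}$ for its Moore--Penrose pseudoinverse, and $e_u$ for the standard basis vector at $u$, one has $\Omega_G(u,v) = (e_u - e_v)^{\top} L^{+} (e_u - e_v)$, and since $\sum_{uv \in E(G)} (e_u - e_v)(e_u - e_v)^{\top} = L$, summing and using the trace identity yields $\sum_{uv \in E(G)} \Omega_G(u,v) = \mathrm{tr}(L^{+} L)$. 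Because $G$ is connected, $L^{+} L$ is the orthogonal projection onto the $(|V(G)| - 1)$-dimensional orthogonal complement of the all-ones vector, so its trace equals $|V(G)| - 1$, recovering the same value and confirming the formula by a purely linear-algebraic computation.
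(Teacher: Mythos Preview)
Your argument is correct: the identification $\Omega_G(u,v)=\Pr[uv\in T]$ for an edge $uv$ together with linearity of expectation immediately gives $\sum_{uv\in E(G)}\Omega_G(u,v)=\mathbb{E}[|E(T)|]=|V(G)|-1$, and the spectral cross-check via $\mathrm{tr}(L^{+}L)$ is also valid.

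There is nothing to compare against, however, because the paper does not prove this lemma at all. It is simply stated with a citation to Foster's original 1949 paper and used as a black box (its only role is to deduce Lemma~\ref{lem2.2}, the formula $\Omega(G)=(|V(G)|-1)/|E(G)|$ for equiarboreal $G$). So your write-up supplies strictly more than the paper does here.
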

For a connected equiarboreal graph $G$, the resistance distance is a constant between the endpoints of each edge.  For simplicity, we denote this common value by $\Omega(G)$.  By Lemma \ref{lem2.1}, the following result can be readily obtained.
\begin{lem}\label{lem2.2}
Let $G$ be a connected equiarboreal graph. Then
$$\Omega(G)=\dfrac{|V(G)|-1}{|E(G)|}.$$
\end{lem}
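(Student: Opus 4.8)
The plan is to derive this identity directly from Foster's first formula (Lemma~\ref{lem2.1}) together with the single defining property of an equiarboreal graph. Recall that for a connected equiarboreal graph $G$ the resistance distance $\Omega_G(u,v)$ takes the same value, denoted $\Omega(G)$, across every edge $uv\in E(G)$; this constancy is the only structural fact the argument needs.

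First I would record Foster's first formula applied to $G$, namely
$$\sum_{uv\in E(G)}\Omega_G(u,v)=|V(G)|-1.$$
The crucial observation is that the left-hand side is a sum ranging over the edge set $E(G)$, so it consists of exactly $|E(G)|$ summands, one for each edge. Next I would invoke the equiarboreal hypothesis to replace each summand $\Omega_G(u,v)$ by the common value $\Omega(G)$. Because every term is now identical, the sum collapses to a single product, yielding
$$|E(G)|\cdot\Omega(G)=|V(G)|-1.$$
Dividing through by $|E(G)|$, which is nonzero since $G$ is connected and nontrivial, gives the asserted formula $\Omega(G)=\frac{|V(G)|-1}{|E(G)|}$, and the proof is complete.

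There is no substantive obstacle here: the result is an immediate specialization of Foster's formula to the equiarboreal case, where the constancy of the edge resistance distances permits the sum to be evaluated in closed form. The only point deserving a moment's care is the bookkeeping that the left-hand sum contains precisely $|E(G)|$ terms, each edge being counted exactly once, since this is what justifies factoring the constant $\Omega(G)$ out of the summation. Accordingly, I would present the lemma as a short corollary of Lemma~\ref{lem2.1} rather than as an independent argument.
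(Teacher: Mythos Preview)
Your proof is correct and matches the paper's approach exactly: the paper simply states that the lemma follows readily from Foster's first formula (Lemma~\ref{lem2.1}), and your argument spells out precisely this derivation.
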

Below are two basic properties of resistance distances.

\textbf{Series connection:} resistors that are connected in series can be replaced by a single resistor whose resistance is the sum of the resistances.

\textbf{Parallel connection:} resistors that are connected in parallel can be replaced by a single resistor whose conductance (the inverse of resistance) is the sum of the conductances.

In \cite{ete}, two weighted graphs (networks) $G$ and $H$ are defined to be electrically equivalent with respect to $S\subseteq V(G) \cap V(H)$ if they cannot be distinguished by applying voltages to $S$ and measuring the resulting currents on $S$. From the perspective of resistance distance, we can give the following equivalent definition.

\begin{defn}
($S$-equivalent network).
Let $G$ and $H$ be two electrical networks and let $S\subseteq V(G)\cap V(H)$. If for any pair of vertices $\{u,v\} \subseteq S$, $\Omega_{G}(u, v)= \Omega_{H}(u, v)$ holds, then $G$ and $H$ are called \textit{$S$-equivalent networks}.
\end{defn}
We proceed to give a useful principle in electrical network theory.

\textbf{Principle of substitution.} Let $G$ be an electrical network with a subnetwork $H$ (not necessarily induced). If $H$ and $H^*$ are $V(H)$-equivalent networks (clearly $V(H) \subseteq V(H^*))$, then the network $G'$ obtained  by replacing $H$ with $H^*$ in $G$ is $V(G)$-equivalent to $G$. That is, for any pair of vertices $\{u,v\} \subseteq V(G)$,  we have $\Omega_{G}(u, v)= \Omega_{G'}(u, v)$.

As is well known, the series and parallel connections are commonly used equivalent substitution tools. In \cite{svg}, Gervacio provided an equivalent substitution applicable to complete bipartite graphs. To this end, he introduced the concept of negative resistances, which proves to be useful.

\textbf{Complete bipartite graph-double star transformation \cite{svg}:} The complete bipartite graph $K_{m,n}$ with partite sets $\{u_1,u_2,...,u_m\}$ and $\{v_1,v_2,...,v_n\}$ can be converted to a weighted double star $S^{\omega}_{m,n}$, as shown in Fig. \ref{Fig.6}, and the weights on the edges satisfy that
$$R_{S^{\omega}_{m,n}}(u_0,u_i)=\dfrac{1}{n}, \quad R_{S^{\omega}_{m,n}}(v_0,v_j)=\dfrac{1}{m}, \quad R^{\omega}_{S_{m,n}}(u_0,v_0)=-\dfrac{1}{nm}.$$
where $1\leq i \leq m$,  $1\leq j\leq n.$ Then $K_{m,n}$ and $S^{\omega}_{m,n}$ are $V(K_{m,n})$-equivalent networks.
\begin{figure*}[ht]
  \setlength{\abovecaptionskip}{0cm} %
  \centering
 $$\includegraphics[width=2.2in]{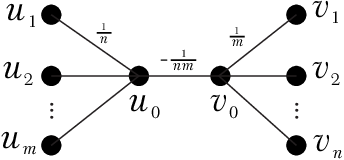}$$\\
 \caption{The equivalent double star $S^{\omega}_{m,n}$.}    \label{Fig.6}
\end{figure*}

\textbf{Rayleigh's monotonicity law}\cite{pgd}. In an electrical network, if the edge-resistance increases, then the resistance distance between any pair of vertices will not decrease.

Let $G$ be a weighted graph, and let $G^*$ be a weighted graph obtained from $G$ by identifying vertices $i$ and $j$ as a single vertex $i$ and deleting all possible loops.  Note that if $k$ is a common neighbor of $i$ and $j$ in $G$, then by parallel connection, the conductance of new edge $ki$ in $G^*$ is the sum of the conductances of edges $ik$ and $jk$ in $G$. From an electrical perspective, identifying vertices $i$ and $j$ is equivalent to short-circuiting them with a zero-resistance conductor, which corresponds in graph theory to connecting $i$ and $j$ with an edge of weight 0. Thus, by Rayleigh's monotonicity law, we have the following result.

\begin{lem}\label{lem2.4}
Let $G$ be a weighted graph and  $G^*$ be a weighted graph obtained from $G$ by identifying vertices $i$ and $j$ as a single vertex. Then for two vertices $u$, $v \in V(G^*)$, we have
$$\Omega_G(u,v) \geq \Omega_{G^*}(u,v)$$
\end{lem}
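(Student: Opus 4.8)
The plan is to interpret the passage from $G$ to $G^{*}$ as the insertion of a zero--resistance conductor between $i$ and $j$ and then apply Rayleigh's monotonicity law. Precisely: let $G'$ be the weighted graph obtained from $G$ by adjoining one new edge joining $i$ and $j$ whose resistance equals $0$ (a parallel edge, if $i$ and $j$ are already adjacent). Since a zero--resistance edge forces $i$ and $j$ to the same potential, $G'$ and $G^{*}$ are $V(G^{*})$--equivalent networks --- the edges from a common neighbour of $i$ and $j$ combine by the parallel law exactly as in the definition of $G^{*}$ --- so $\Omega_{G'}(u,v)=\Omega_{G^{*}}(u,v)$ for all $u,v\in V(G^{*})$. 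On the other hand $G$ is recovered from $G'$ by deleting that edge, i.e.\ by raising its resistance to $+\infty$, so by Rayleigh's monotonicity law no resistance distance decreases, whence $\Omega_{G'}(u,v)\le\Omega_{G}(u,v)$. Combining the two gives $\Omega_{G^{*}}(u,v)\le\Omega_{G}(u,v)$, which is the assertion; the case in which $u$ or $v$ is the merged vertex is included, since, taking $u=i$ say, $\Omega_{G}(i,v)=\Omega_{G'}(i,v)$ is unaffected by which name we give the node $i$.

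The only delicate point is the degenerate ``resistance-$0$ edge'' (equivalently ``conductance-$\infty$'') used above; if one wishes to avoid it there are two clean routes. (i) \emph{Monotone limit}: for $r>0$ put an $ij$--edge of resistance $r$, obtaining $G_{r}$; by Rayleigh's monotonicity law $r\mapsto\Omega_{G_{r}}(u,v)$ is non-decreasing and $\Omega_{G_{r}}(u,v)\le\Omega_{G}(u,v)$, and since effective resistance is a rational, hence continuous, function of the edge resistances, $\Omega_{G_{r}}(u,v)\to\Omega_{G^{*}}(u,v)$ as $r\to0^{+}$, giving the inequality in the limit. (ii) \emph{Thomson's principle}: a unit $u$--$v$ flow in $G$ restricts to a unit $u$--$v$ flow in $G^{*}$ (the flow carried along $ij$--edges is absorbed into the merged vertex and conservation still holds elsewhere) whose energy computed in $G^{*}$ does not exceed its energy in $G$; minimising over flows yields $\Omega_{G^{*}}(u,v)\le\Omega_{G}(u,v)$ directly. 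Either way the lemma follows, and I expect the rest of the paper to use this statement together with series/parallel reductions, the principle of substitution, and the $K_{m,n}$--double star transformation to compare resistance distances in $G$ with those in simplified networks obtained by contracting parts of $G$.
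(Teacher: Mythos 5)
Your argument is correct and is essentially the paper's own: the paper justifies this lemma in the paragraph preceding it by observing that identifying $i$ and $j$ amounts to short-circuiting them with a zero-resistance edge and then invoking Rayleigh's monotonicity law, exactly as in your main paragraph. Your two supplementary routes (the monotone limit in $r\to 0^{+}$ and Thomson's principle) are a welcome extra layer of rigor for the degenerate zero-resistance edge, but the core approach coincides with the paper's.
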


The following known degree-based lower bounds on the resistance distance between two vertices in a graph are derived by combining series and parallel connections with Lemma \ref{lem2.4}.
 \begin{lem}\label{lem2.6}\cite{dco}
Let $G$ be a graph. Then for two vertices $u$, $v \in V(G)$, we have
$$ \Omega_{G}(u,v)\geq \left\{
\begin{array}{ll}
 \dfrac{1}{d_G(u)}+ \dfrac{1}{d_G(v)}        & \mbox{if} \ uv  \notin E(G),  \\

 \dfrac{1}{d_G(u)+1}+ \dfrac{1}{d_G(v)+1}         & \mbox{if} \ uv \in E(G).
\end{array}
\right.$$
\end{lem}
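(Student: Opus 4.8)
The plan is to prove both cases by reducing $G$ to a small network whose resistance can be evaluated explicitly, using only short-circuiting (Lemma \ref{lem2.4}) together with the series and parallel rules. The governing idea is that identifying vertices never increases the resistance distance, so any network obtained from $G$ by merging vertices yields a lower bound on $\Omega_G(u,v)$.

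First I would treat the case $uv \notin E(G)$. Merge every vertex of $G$ other than $u$ and $v$ into a single vertex $w$, and write $G^{*}$ for the result. Since $u$ is not adjacent to $v$, all $d_G(u)$ edges at $u$ now join $u$ to $w$, so by the parallel rule the $u$--$w$ resistance is $1/d_G(u)$; likewise the $v$--$w$ resistance is $1/d_G(v)$, and there is no direct $u$--$v$ edge. Thus $G^{*}$ is a series path $u$--$w$--$v$ with $\Omega_{G^{*}}(u,v) = 1/d_G(u) + 1/d_G(v)$, and Lemma \ref{lem2.4} gives $\Omega_G(u,v) \ge \Omega_{G^{*}}(u,v)$, which is exactly the desired bound.

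For $uv \in E(G)$ I would reduce to the previous case. The edge $uv$, of resistance $1$, is connected in parallel with the rest of the network between $u$ and $v$, namely $G - uv$; hence the parallel rule gives $\Omega_G(u,v) = \Omega_{G-uv}(u,v)\big/\bigl(1 + \Omega_{G-uv}(u,v)\bigr)$, an increasing function of $\Omega_{G-uv}(u,v)$. In $G-uv$ the vertices $u,v$ are non-adjacent with degrees $d_G(u)-1$ and $d_G(v)-1$, so the already-proved case yields $\Omega_{G-uv}(u,v) \ge 1/(d_G(u)-1) + 1/(d_G(v)-1)$. Substituting and simplifying produces the sharper bound $\Omega_G(u,v) \ge (a+b-2)/(ab-1)$, where $a = d_G(u)$ and $b = d_G(v)$.

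It then remains to compare this with the stated bound. A short computation shows
$$\frac{a+b-2}{ab-1} - \left(\frac{1}{a+1}+\frac{1}{b+1}\right) = \frac{(a-b)^2}{(ab-1)(a+1)(b+1)} \ge 0,$$
so the stated inequality follows, with equality exactly when $d_G(u)=d_G(v)$. The main point to watch is this adjacent case: the natural short-circuiting argument produces the sharper quantity $(a+b-2)/(ab-1)$ rather than the stated symmetric form, and one must verify the elementary inequality above to descend to it. Degenerate configurations (a pendant vertex, or $u,v$ lying in different components so that some resistance is infinite) are easily checked to satisfy the bound directly, and I would dispose of them separately.
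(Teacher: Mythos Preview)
Your proof is correct and follows precisely the approach the paper indicates: it states only that the lemma is ``derived by combining series and parallel connections with Lemma~\ref{lem2.4}'' and cites \cite{dco} without giving details. Your observation that the short-circuiting argument in the adjacent case actually yields the sharper bound $(a+b-2)/(ab-1)$, which must then be compared to the stated symmetric form via the identity $(a-b)^2/[(ab-1)(a+1)(b+1)]$, is a nice refinement beyond what the paper records.
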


For a weighted graph $G$ and $u \in V(G)$,  we denote by $W_G(u)$ the sum of the inverse of the weights on the edges incident to $u$, that is
$$W_G(u)=\sum_{v \in N_G(u)}\dfrac{1}{R_G(u,v)}.$$
In particular, $W_G(u)=d_G(u)$ if $G$ is unweighted.
\begin{lem}\label{lem2.5}
Let $G$ be a weighted graph. Then for two vertices $u$, $v \in V(G)$, we have
$$\Omega_{G}(u,v)\geq \emph{max}\left\{\dfrac{1}{W_G(u)}, \dfrac{1}{W_G(v)}\right\}.$$
\end{lem}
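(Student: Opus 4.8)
The plan is to prove the one-sided estimate $\Omega_G(u,v)\ge 1/W_G(u)$; the symmetric estimate $\Omega_G(u,v)\ge 1/W_G(v)$ then follows by interchanging the roles of $u$ and $v$, and taking the larger of the two yields the claimed bound. If $u$ and $v$ lie in different components of $G$ then $\Omega_G(u,v)=+\infty$ and there is nothing to prove, so we may assume they lie in the same component.

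First I would collapse the graph down to two vertices by iterating Lemma \ref{lem2.4}. Starting from $G$, identify $v$ with some other vertex of $V(G)\setminus\{u\}$; then identify the resulting merged vertex with another vertex of $V(G)\setminus\{u\}$; continue until all of $V(G)\setminus\{u\}$ has been absorbed into a single vertex, which we keep calling $v$. At every stage the pair under consideration is $\{u,v\}$ with $v$ the current merged vertex (always distinct from $u$), so Lemma \ref{lem2.4} gives $\Omega_G(u,v)\ge \Omega_{G^{*}}(u,v)$, where $G^{*}$ is the final weighted multigraph on the two vertices $u$ and $v$. Next I would identify $G^{*}$ explicitly: each edge $uw$ of $G$ incident to $u$ survives as an edge between $u$ and $v$ carrying its original resistance $R_G(u,w)$, while each edge of $G$ not incident to $u$ has both of its endpoints merged into $v$ and hence becomes a loop, which we delete. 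Thus $G^{*}$ is a bundle of $d_G(u)$ parallel resistors between $u$ and $v$ with resistances $R_G(u,w)$ for $w\in N_G(u)$, so by the parallel connection rule $\Omega_{G^{*}}(u,v)=\bigl(\sum_{w\in N_G(u)}1/R_G(u,w)\bigr)^{-1}=1/W_G(u)$. Combining these two facts proves this side of the inequality.

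The argument is mostly bookkeeping, and the only point deserving care is the iterated identification: one must check that Lemma \ref{lem2.4} is indeed applied to the pair $\{u,v\}$ at each step, and that the parallel edges produced whenever a common neighbour of $u$ is merged are correctly combined (this is already built into the conductance-addition convention underlying Lemma \ref{lem2.4}). As an alternative I would remark that the bound also drops out of the Dirichlet variational principle: the effective conductance between $u$ and $v$ equals the infimum of the energy $\sum_{xy\in E(G)}R_G(x,y)^{-1}(f(x)-f(y))^2$ over functions $f$ with $f(u)=1$ and $f(v)=0$, and evaluating at the function that is $1$ at $u$ and $0$ elsewhere gives effective conductance at most $W_G(u)$, i.e. $\Omega_G(u,v)\ge 1/W_G(u)$. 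I would nonetheless present the identification proof as the primary one, since it uses only the tools already developed in this section.
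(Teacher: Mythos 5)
Your proposal is correct and follows essentially the same route as the paper: the paper likewise identifies all of $V(G)\setminus\{u\}$ into a single vertex (doing it in one step rather than iterating Lemma \ref{lem2.4}), obtains a two-vertex network whose single effective edge has resistance $1/W_G(u)$ by the parallel-connection rule, and concludes by symmetry in $u$ and $v$. Your extra care with the iteration and the Dirichlet-principle remark are fine but not needed beyond what the paper records.
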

\begin{proof}Let $G^*$ be the weighted graph obtained from $G$ by identifying all vertices in $V(G)\setminus \{u\}$ as a single vertex $w$, which results in a weighted complete graph $K^{\omega}_2$ with edge weight $\frac{1}{W_G(u)}$.  Then, from Lemma \ref{lem2.4}, it follows that
$$\Omega_G(u,v) \geq \Omega_{G^*}(u,v) = \dfrac{1}{W_G(u)}.$$
Similarly, we can obtain $\Omega_G(u,v) \geq \frac{1}{W_G(v)}$.
\end{proof}

\section{Main results}\label{section3}

Let $G$ be a nontrivial connected graph. An \emph{edge cut} in $G$ is a set of edges $C \subseteq E(G)$ of the form $C = \{ uv \in E(G) \mid u \in A,\ v \in B \}$, where $(A, B)$ is a partition of $V(G)$ with $A \neq \emptyset$ and $B \neq \emptyset$. The edge-connectivity of $G$ is the minimum cardinality over all such edge cuts. The edge cut $C$ is called \emph{non-trivial} if $|A| \geq 2$ and $|B| \geq 2$. Let $G_1 = G[A]$ and $G_2 = G[B]$ denote the subgraphs induced by $A$ and $B$, respectively.  Let $k = |C|$ be the size of the edge cut. We label the edges in $C$ as $e_i=u_iv_i$ ($1 \leq i \leq k$), where $u_i$ is a vertex in $A$ and $v_i$ is a vertex in $B$. Note that the vertices $u_i$ (or $v_i$) are not necessarily distinct, as different edges may share the same endpoint in $A$ (or $B$).  In the following discussion, we also denote by $G[C]$ the edge-induced subgraph of $G$ with edge set $C$. Clearly, in this viewpoint, $G[C]$ is a bipartite graph, and its bipartition is given by $A \cap V(G[C])$ and $B \cap V(G[C])$, which we denote by $A_1$ and $B_1$, respectively, so that $A_1 \subseteq A$ and $B_1 \subseteq B$.  We first give the following result.
\begin{lem}\label{lem3.1}
Let $G$ be a connected $k$-regular equiarboreal graph with $k \geq 3$.  Let $C$ be a non-trivial edge cut satisfying $|C| \leq k$. Then
$|A_1| \geq 2$ and $|B_1| \geq 2$.
\end{lem}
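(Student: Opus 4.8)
The plan is to argue by contradiction. Suppose, without loss of generality, that $|A_1| = 1$, say $A_1 = \{u\}$. Since $G$ is $k$-regular and every edge of $C$ is incident with $u$, we have $|C| = d_G(u) = k$, so $C$ is exactly the set of edges incident with $u$, and $B_1$ is precisely $N_G(u)$, with $|B_1| = k \geq 3$. Because $C$ is non-trivial we have $|A| \geq 2$, so $A \setminus \{u\}$ is non-empty; since $G$ is connected and no edge of $C$ joins $A \setminus \{u\}$ to $B$, the vertex $u$ must have a neighbour inside $A$ as well — but all $k$ edges at $u$ go to $B_1 \subseteq B$, a contradiction with $|A|\ge 2$ already (indeed $u$ would be isolated from $A\setminus\{u\}$, contradicting connectedness). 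Wait — this already finishes the case $|A_1|=1$ cheaply, so the real content is the symmetric-looking but genuinely different situation, and I should instead structure the proof so that the contradiction comes from a resistance computation rather than from connectedness alone.

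So the cleaner plan: assume $|A_1| = 1$ with $A_1 = \{u\}$. As above this forces $|C| = k$, $C = \{\text{edges at }u\}$, $B_1 = N_G(u)$. Now consider the equiarboreal value $\Omega(G)$, which by Lemma~\ref{lem2.2} equals $\frac{|V(G)|-1}{|E(G)|} = \frac{n-1}{nk/2} = \frac{2(n-1)}{nk}$; in particular $\Omega(G) < \frac{2}{k}$. Pick any edge $e = uv_1 \in C$. I want to show $\Omega_G(u,v_1) > \frac{2}{k}$, which contradicts $\Omega_G(u,v_1) = \Omega(G)$. To get a good lower bound on $\Omega_G(u,v_1)$, I would apply the Principle of Substitution / Rayleigh monotonicity: collapse the entire set $B$ to analyse the network between $u$ and $v_1$. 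Concretely, since removing $C$ disconnects $A$ from $B$ and all of $C$ meets $u$, every path from $u$ to $v_1$ other than the direct edge must leave $B$ through $u$ — i.e. $u$ is a cut vertex separating $v_1$ from $A \setminus\{u\}$. Hence in computing $\Omega_G(u,v_1)$ the subgraph $G[A]$ is irrelevant, and $\Omega_G(u,v_1) = \Omega_{G[B]\cup\{uv_1,\dots\}}(u,v_1)$ where $u$ is joined to $B_1$ by $k$ edges. So the question reduces to a lower bound for the resistance between $v_1$ and a vertex $u$ of degree $k$ all of whose neighbours lie in a $k$-regular-ish graph $G[B]$.

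The main obstacle, then, is getting the strict inequality $\Omega_G(u,v_1) > \frac{2}{k}$ from local information. Lemma~\ref{lem2.6} gives $\Omega_G(u,v_1) \geq \frac{1}{d_G(u)+1} + \frac{1}{d_G(v_1)+1} = \frac{2}{k+1}$, which is not quite enough. The fix I would pursue: use that $v_1$ has $k-1$ neighbours besides $u$, all in $B$, but after deleting the edge $uv_1$ the vertex $u$ still has $k-1$ edges into $B_1 \setminus \{v_1\}$ (or into $v_1$ if there are parallel collapsed edges — but here $G$ is simple so there are $k-1$ other neighbours of $u$ in $B$). Replace the edge $uv_1$ in parallel with the rest of the network: $\Omega_G(u,v_1) = \big(1 + \frac{1}{R'}\big)^{-1}$ where $R'$ is the resistance between $u$ and $v_1$ in $G - uv_1$. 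Bound $R'$ from below by noting every $u$–$v_1$ path in $G-uv_1$ uses one of the $k-1$ edges $uv_i$ ($i\neq 1$) and then travels through $G[B]$; an averaging/symmetrization over the $k$ choices of which neighbour of $u$ plays the role of $v_1$, combined with Foster's formula (Lemma~\ref{lem2.1}) applied inside $G[B]$ together with the $k$ pendant-type edges at $u$, should yield $R' \geq$ something forcing $\Omega_G(u,v_1) > \frac{2}{k}$. I expect the delicate point to be making this averaging rigorous — i.e. showing the $k$ resistances $\Omega_G(u,v_i)$ cannot all simultaneously be as small as $\frac{2(n-1)}{nk}$ — and this is exactly where equiarboreality (all of them equal) is used decisively, since it converts an averaged bound into a pointwise one. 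Once the contradiction is reached for $|A_1|=1$, the symmetric argument gives $|B_1|\ge 2$, completing the proof.
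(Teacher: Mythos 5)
There is a genuine gap, and it occurs in the very first step. From $A_1=\{u\}$ you deduce that $|C|=d_G(u)=k$. This does not follow: every edge of $C$ is incident with $u$, so $|C|\le k$, but $u$ may also have neighbours inside $A$, in which case $|C|=k-|C'|<k$ where $C'$ is the set of edges from $u$ into $A\setminus\{u\}$. The subcase you actually treat, namely $|C|=k$ (all edges at $u$ going to $B$), is already impossible for a non-trivial cut in a connected graph by the connectivity observation you yourself make ($A\setminus\{u\}\neq\emptyset$ would be cut off). So your argument disposes of a vacuous case and never addresses the real one, in which $1\le |C'|=k-|C|$. The entire second half of the proposal (collapsing $B$, treating $u$ as a cut vertex separating $v_1$ from $A\setminus\{u\}$, the averaging over the $k$ neighbours) is built on the same false premise that $C$ consists of all $k$ edges at $u$; once $u$ has neighbours in $A$, $u$ is not a cut vertex of the required kind and that reduction breaks down. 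Moreover, that half is only a sketch (``should yield'', ``I expect the delicate point to be\ldots'') and the key quantitative step $\Omega_G(u,v_1)>\tfrac{2}{k}$ is never established; indeed Lemma~\ref{lem2.6} only gives $\tfrac{2}{k+1}$, as you note, and no rigorous improvement is supplied.

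The missing idea is much more elementary and does not involve resistances at all. The paper applies Godsil's bound (Theorem~\ref{tm1.3}): $\lambda(G)\ge \tfrac{|E(G)|}{|V(G)|-1}=\tfrac{k|V(G)|}{2(|V(G)|-1)}>\tfrac{k}{2}$, hence $|C|>\tfrac{k}{2}$, and $|C|\le k-1$ (since $|C|=k$ is excluded by connectivity as above). But then $C'=\{u u'\in E(G)\mid u'\in A\}$ is itself an edge cut of $G$, separating $A\setminus\{u\}$ from $B\cup\{u\}$, of size $k-|C|<\tfrac{k}{2}$, contradicting $\lambda(G)>\tfrac{k}{2}$. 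You should redirect the proof along these lines: the whole content of the lemma is the case $|C|<k$, and it is handled by exhibiting the complementary small cut $C'$, not by a resistance estimate at the edge $uv_1$.
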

\begin{proof} Here we only prove that $|A_1| \geq 2$. The  case  $|B_1| \geq 2$ can be proved in the same way. Suppose, to the contrary, that $A_1 = \{u_1\}$.  According to Theorem \ref{thm1.2}, we have
$$\lambda(G) \geq \frac{|E(G)|}{|V(G)|-1} = \frac{k|V(G)|}{2(|V(G)|-1)}> \frac{k}{2}.$$
Since $C$ is a non-trivial edge cut, we have $\frac{k}{2} < |C| \leq k-1$. Define the edge set $C'=\{u_1u\in E(G)\mid  u\in A\}$, then $|C'|=k-|C|$. Therefore, we have $1 \leq |C'| < \frac{k}{2}$. On the other hand, it is not hard to see that $C'$ is an edge cut of $G$, which contradicts $\lambda(G) > \frac{k}{2}.$
\end{proof}

\begin{lem}\cite{bbo}\label{lem3.3}
Let $G$ be a connected $k$-regular graph. If $k$ is even, then $\lambda(G)$ is even.
\end{lem}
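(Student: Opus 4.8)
The plan is to deduce the lemma from a purely parity-based observation: in any graph in which every vertex has even degree, \emph{every} edge cut has even cardinality. Once this is established, the lemma is immediate, since $\lambda(G)$ is by definition the minimum of a nonempty set of positive integers each of which is even, and hence is itself even.

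First I would take an arbitrary edge cut $C$ determined by a partition $(A,B)$ of $V(G)$ with $A \neq \emptyset$ and $B \neq \emptyset$. Counting the contributions to $\sum_{v \in A} d_G(v)$, each edge of the induced subgraph $G[A]$ is counted twice (once at each of its two endpoints in $A$) while each edge of $C$ is counted exactly once (at its single endpoint in $A$), which yields the identity
$$|C| = \sum_{v\in A} d_G(v) - 2\,|E(G[A])|.$$
Since $G$ is $k$-regular with $k$ even, we have $\sum_{v\in A} d_G(v) = k|A| \equiv 0 \pmod 2$, and $2|E(G[A])|$ is manifestly even, so $|C|$ is even. Applying this to a minimum edge cut of $G$ shows that $\lambda(G)$ is even. (Equivalently, one could invoke the fact that a connected graph with all degrees even is Eulerian and observe that a closed walk must cross any edge cut an even number of times, but the degree count above is the most direct route and avoids the Eulerian machinery.)

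I do not anticipate a genuine obstacle here: the argument is a one-line parity computation. The only point requiring a little care is the combinatorial identity for $|C|$ — one must keep straight that edges lying inside $A$ contribute $2$ while edges of the cut contribute $1$ to $\sum_{v\in A} d_G(v)$ — after which the conclusion follows at once, and note that connectedness is used only to guarantee $\lambda(G)\ge 1$ so that the statement is non-vacuous.
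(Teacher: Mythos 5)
Your proof is correct: the identity $|C| = \sum_{v\in A} d_G(v) - 2|E(G[A])| = k|A| - 2|E(G[A])|$ shows every edge cut of an even-regular graph has even size, which is exactly the standard argument for this classical fact. The paper simply cites this lemma from Bollobás without proof, so there is nothing to compare against; your one-line parity computation is the expected route.
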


For a connected $k$-regular equiarboreal graph $G$, define a function $\mathcal{F}(x,y)$ for $x,y \geq 1$, as follows:
$$\mathcal{F}(x,y)=\dfrac{4(k-x)(k-y)-(k-x-y+1)^2}{2(k-x)(k-y)(k+1)-k(k-x-y+1)^{2}}.$$
Moreover, we give a lower bound for $\Omega(G)$ via this function by proving the following theorem.  For convenience, the weight of all unlabeled edges in the figures is set to 1.
\begin{thm}\label{tm3.66}
Let $G$ be a connected $k$-regular equiarboreal graph with $k \geq 3$. Let $C$ be a non-trivial edge cut satisfying $|C| \leq k$. Then
$$\Omega(G) \geq \emph{max} \Big\{\mathcal{F}(d_{G[C]}(u),d_{G[C]}(v))| uv \in C\Big\}.  $$
\end{thm}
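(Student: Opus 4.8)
The plan is to fix an edge $e = uv \in C$ with $u \in A_1$, $v \in B_1$, and to construct from $G$ a small $V(\{u,v\})$-equivalent network whose effective resistance between $u$ and $v$ is exactly $\mathcal{F}(d_{G[C]}(u), d_{G[C]}(v))$; since $\Omega(G) = \Omega_G(u,v)$ for every edge of an equiarboreal graph, and since (by the Principle of Substitution together with Rayleigh's monotonicity law) replacing parts of the network by electrically simpler pieces can only decrease $\Omega_G(u,v)$, this yields the stated lower bound for each $e \in C$, and hence for the maximum. Write $x = d_{G[C]}(u)$ and $y = d_{G[C]}(v)$; note $1 \le x \le k$ and $1 \le y \le k$, and by Lemma~\ref{lem3.1} the crossing edges are genuinely spread out. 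The idea is that $u$ has $x$ edges going into $B$ (the $C$-edges at $u$) and $k - x$ edges staying inside $A = V(G_1)$; symmetrically $v$ has $y$ edges into $A$ and $k-y$ inside $G_2$. There are also the remaining $k - x$ (resp. $k-y$) crossing edges of $C$ not incident to $u$ (resp. $v$), and the count $|C| = x + y - 1 + (\text{extra crossing edges})$ governs the term $(k - x - y + 1)$.

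First I would collapse each induced side to a single vertex. Identify all of $V(G_1) \setminus \{u\}$ into one node $a$ and all of $V(G_2)\setminus\{v\}$ into one node $b$; by Lemma~\ref{lem2.4} this only decreases $\Omega_G(u,v)$. After this identification the edges internal to $G_1$ become $k - x$ parallel edges between $u$ and $a$ (conductance $k-x$), the edges internal to $G_2$ become $k-y$ parallel edges between $v$ and $b$ (conductance $k-y$), the crossing edge $uv$ survives, the $x - 1$ other $C$-edges at $u$ become parallel edges $ub$, the $y-1$ other $C$-edges at $v$ become parallel edges $av$, and the remaining crossing edges of $C$ become parallel edges between $a$ and $b$. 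The number of $a$--$b$ edges is $|C| - 1 - (x-1) - (y-1) = |C| - x - y + 1$; here I would use Lemma~\ref{lem3.3} and the parity/size constraints from Lemma~\ref{lem3.1} (together with $|C| \le k$) to bound this quantity from above by $k - x - y + 1$ when that is nonnegative, so that by Rayleigh's law we may as well take exactly $k - x - y + 1$ parallel $a$--$b$ edges (fewer edges means larger resistance, so this is the conservative choice). We are left with a four-vertex network on $\{u, a, b, v\}$ with prescribed parallel-edge multiplicities; the remaining $ub$ edges (there are $x-1$ of them, times the original $uv$ edge) and $av$ edges can be folded in similarly, and one computes $\Omega_{\{u,v\}}$ of this explicit four-node network by a $\Delta$--$Y$ transformation or by a direct $3\times 3$ Laplacian inverse. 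That closed-form rational expression is exactly $\mathcal{F}(x,y)$ — this is the routine computation that produces the numerator $4(k-x)(k-y) - (k-x-y+1)^2$ and denominator $2(k-x)(k-y)(k+1) - k(k-x-y+1)^2$.

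The main obstacle is bookkeeping rather than any deep idea: one must show that the edge multiplicities after the two identifications are at least as large as the values plugged into $\mathcal{F}$, so that Rayleigh's law points the right way, and in particular one must handle the boundary cases $x + y - 1 > k$ (where $k - x - y + 1 < 0$ and the "parallel $a$--$b$ edges" picture degenerates — here one instead has to argue the crossing edges overlap, forcing identifications of $u$-neighbours with $v$-neighbours, and the formula still comes out right because $(k-x-y+1)^2$ only depends on the absolute value) and the cases $x = k$ or $y = k$ (where one side contributes no internal edges, so $u$ or $v$ gets merged with $a$ or $b$). I would organize the proof by first doing the generic case $1 \le x, y$ with $x + y \le k+1$ via the explicit four-node reduction above, then checking that the degenerate ranges either reduce to the generic formula or give a trivially valid (even larger) bound. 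One should also double-check that the denominator of $\mathcal{F}$ is positive throughout the relevant range, which follows from $2(k-x)(k-y)(k+1) \ge 2(k-x)(k-y) \cdot k \ge k(k-x-y+1)^2$ using $k \ge 3$ and $(k-x)(k-y) \ge$ the bound on $(k-x-y+1)^2/4$ that the same reduction supplies — this positivity is what makes "$\ge \mathcal{F}$" a meaningful (non-vacuous) inequality and will be used again when $\mathcal{F}$ is later compared against $\Omega(G) = (|V(G)|-1)/|E(G)|$ from Lemma~\ref{lem2.2} to derive a contradiction with $|C| \le k$.
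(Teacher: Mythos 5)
Your proposal is correct and is essentially the paper's own argument: the paper likewise reduces to the four-vertex weighted network with resistances $1/(k-x)$, $1/(k-y)$, $1/(x-1)$, $1/(y-1)$, $1/(k-x-y+1)$ and the unit edge $uv$, and evaluates it by Kirchhoff's laws to get $\mathcal{F}(x,y)$ (the only cosmetic difference is that the paper first identifies only the boundary vertices $A_1\setminus\{u_1\}$ and $B_1\setminus\{v_1\}$, replaces each side by a single equivalent edge via the principle of substitution, and then invokes Lemma~\ref{lem2.5} to lower-bound its weight by $1/(k-x)$, whereas you collapse each entire side at once and read off that weight directly from the parallel rule). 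The degenerate cases you flag do not actually arise: $|C|\ge x+y-1$ together with $|C|\le k$ forces $k-x-y+1\ge 0$, and Lemma~\ref{lem3.1} rules out $x=k$ or $y=k$.
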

\begin{proof}
Let $C = \{uv \in E(G)\mid u\in A, v\in B \}$ be a non-trivial edge cut of $G$ with $|C| \leq k$.  Without loss of generality, arbitrarily  choose an edge $e=(u_1,v_1) \in C$. Suppose that $d_{G[C]}(u_1)=x$ and $d_{G[C]}(v_1)=y$, where $1 \leq x,y \leq k-1.$  Our main goal is to prove the following inequality:
$$\Omega(G) \geq \mathcal{F}(x,y).$$
By Lemma \ref{lem3.1}, we have $|A_1| \geq 2$ and $|B_1| \geq 2$. As an example, the structure of $G[C]$ is shown in Fig. \ref{Fig.5}(a). Let $G^*$ be the weighted graph obtained from $G$ by identifying the vertex sets $A_1 \setminus \{u_1\}$ and $B_1 \setminus \{v_1\}$  as vertices $u_2$ and $v_2$, respectively. According to Lemma \ref{lem2.4}, we have
\begin{equation}\label{eq3.11}
\Omega(G) =\Omega_G(u_1,v_1) \geq \Omega_{G^*}(u_1,v_1).
\end{equation}
\begin{figure*}[ht]
  \setlength{\abovecaptionskip}{0cm} %
  \centering
 $$\includegraphics[width=4.6in]{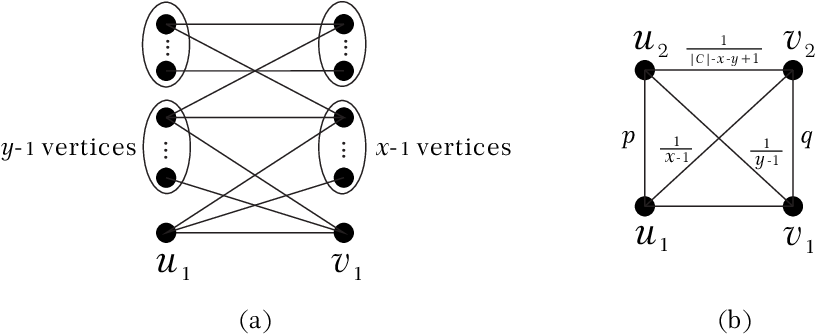}$$\\
 \caption{(a) The structure example of $G[C]$, (b) The weighted graph $K^{\omega}_4$ equivalent to $G^*$.}    \label{Fig.5}
\end{figure*}\\
Observe that $C^{*}=\{u_iv_j| 1 \leq i,j \leq 2\}$ is an edge cut of $G^*$. Assume that
$$G^*_1=G^*[(A\setminus A_1) \cup \{u_1,u_2\}], \quad G^*_2=G^*[(B\setminus B_1) \cup \{v_1,v_2\}].$$
Furthermore, by the principle of substitution, the graph $G^*$ can be transformed to a weighted complete graph $K^{\omega}_4$ as shown in Fig. \ref{Fig.5}(b), where $G$ and $K^{\omega}_4$ are $\{u_1,u_2,v_1,v_2\}$-equivalent networks, with edge weights satisfying:
$$R_{K^{\omega}_4}(u_1,v_1)=1, \quad R_{K^{\omega}_4}(u_1,u_2)= \Omega_{G^*_1}(u_1,u_2)=p, \quad   R_{K^{\omega}_4}(v_1,v_2)= \Omega_{G^*_2}(v_1,v_2)=q,$$
$$ \ R_{K^{\omega}_4}(u_1,v_2)=\frac{1}{x-1}, \quad R_{K^{\omega}_4}(u_2,v_1)= \frac{1}{y-1}, \quad R_{K^{\omega}_4}(u_2,v_2)= \frac{1}{|C|-x-y+1}, $$
where $1 \leq x,y \leq k-1$, and $2 \leq x+y \leq |C|+1 $.\\
Here the cases for $x=1$, $y=1$ or $x+y=|C|+1$ are allowed so that the edges $u_1v_2$, $u_2v_1$ or $u_2v_2$ always exist in $G[C]$ since the cases that $u_1$ and $v_2$, $u_2$ and $v_1$, or $u_2$ and $v_2$ are not adjacent in $G[C]$ can be viewed as they are connected by an edge of weight $+\infty$, respectively. Observe that $W_{G^*_1}(u_1)=k-x$ and $W_{G^*_2}(v_1)=k-y$, according to Lemma \ref{lem2.5}, we get
$$ \Omega_{G^*_1}(u_1,u_2) \geq \frac{1}{W_{G^*_1}(u_1)}=\frac{1}{k-x}, \quad   \Omega_{G^*_2}(v_1,v_2) \geq \dfrac{1}{W_{G^*_1}(v_1)}=\dfrac{1}{k-y}.     $$
Now we construct a corresponding weighted graph $N$ from $K^{\omega}_4$ by assigning the following weights to its edges:
$$R_{N}(u_1,v_1)=1, \quad  R_{N}(u_1,u_2)= \dfrac{1}{k-x}, \quad  R_{N}(v_1,v_2)= \dfrac{1}{k-y},$$
$$ \ R_{N}(u_1,v_2)=\frac{1}{x-1}, \quad R_{N}(u_2,v_1)= \frac{1}{y-1}, \quad R_{N}(u_2,v_2)= \frac{1}{k-x-y+1}, $$
By applying the principle of substitution and Rayleigh's monotonicity law, we obtain
\begin{equation}\label{eq3.20}
\Omega_{G^*}(u_1,v_1) = \Omega_{K^{\omega}_4}(u_1,v_1) \geq \Omega_{N}(u_1,v_1).
\end{equation}
Then we focus on calculating the resistance distance between $u_1$ and $v_1$ in $N$ by using Kirchhoff's current law. We impose a unit voltage between vertices $u_1$ and $v_1$,  and let $V_i$ be the voltage at vertex $i \in\{u_1,u_2,v_1,v_2\}$. Thus, $V_{u_1}=1$ and $V_{v_1}=0$.  From Ohm's law,  let the current entering vertex $u_1$ be $I$, and the current leaving vertex $v_1$ be $I$. Since Kirchhoff's current law states that the total current outflow any vertex is 0, we obtain the following equations for each vertex.  For vertex $u_1$, we have
$$\dfrac{1-V_{u_2}}{\dfrac{1}{k-x}}+\dfrac{1-V_{v_2}}{\dfrac{1}{x-1}}+1=I,$$
which yields that
\begin{equation}\label{eq3.12}
(k-x)V_{u_2}+(x-1)V_{v_2}=k-I.
\end{equation}
Similarly, for vertex $v_1$, we have
\begin{equation*}
(y-1)V_{u_2}+(k-y)V_{v_2}=I-1.
\end{equation*}
For vertex $u_2$, we have
$$\dfrac{V_{u_2}}{\dfrac{1}{y-1}}+\dfrac{V_{u_2}-V_{v_2}}{\dfrac{1}{k-x-y+1}} = \dfrac{1-V_{u_2}}{\dfrac{1}{k-x}}.$$
By simplifying, we obtain
\begin{equation}\label{eq3.13}
2(k-x)V_{u_2}-(k-x-y+1)V_{v_2}=k-x.
\end{equation}
Similarly, for vertex $v_2$, we have
\begin{equation}\label{eq3.14}
2(k-y)V_{v_2}-(k-x-y+1)V_{u_2}=x-1.
\end{equation}
For convenience, let $a=k-x$, $b=k-y$, and $c=k-x-y+1$. Then, we can establish a system of equations based on Eqs. (\ref{eq3.13}) and (\ref{eq3.14}), as follows.
$$ \begin{cases}
2aV_{u_2}-cV_{v_2}=a,\\
2bV_{v_2}-cV_{u_2}=x-1.
\end{cases}
$$
Solving the above system of equations, we obtain
\begin{equation}\label{eq3.15}
 V_{u_2}=\dfrac{2ab+c(x-1)}{4ab-c^2}, \  V_{v_2}=\dfrac{a(2x+c-2)}{4ab-c^2}.
 \end{equation}
Substituting Eq. (\ref{eq3.15}) into Eq. (\ref{eq3.12}), we have
\begin{equation}\label{eq3.88}
I=k-(aV_{u_2}+(x-1)V_{v_2}).
\end{equation}
By the simplification of Eq. (\ref{eq3.88}) in Appendix \ref{appb}, we obtain
\begin{equation}\label{eq3.99}
I=\dfrac{2ab(k+1)-kc^2}{4ab-c^2}.
\end{equation}
As consequence, we have
$$\Omega_N(u_1,v_1)=\dfrac{1}{I}=\dfrac{4ab-c^2}{2ab(k+1)-kc^2},  $$
which equivalent to
$$\Omega_N(u_1,v_1)=\dfrac{4(k-x)(k-y)-(k-x-y+1)^2 }{2(k-x)(k-y)(k+1)-k(k-x-y+1)^{2} }=\mathcal{F}(x,y).  $$
Finally, the desired result follows from Ineqs. (\ref{eq3.11}) and (\ref{eq3.20}). The proof is complete.
\end{proof}

Next, we study the edge-connectivity of connected $k$-regular equiarboreal graphs for $k \geq 1$. The cases $k = 1$ and $k = 2$ are trivial. We therefore focus on $k \geq 3$ in sequences.
\subsection{Cubic equiarboreal  graphs and $4$-regular equiarboreal graphs}
\begin{thm}\label{tm3.1}
Let $G$ be a connected cubic equiarboreal  graph. Then $\lambda(G)=3$.
\end{thm}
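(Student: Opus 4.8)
\textit{Proof proposal.} Since $G$ is cubic, the set of edges at any fixed vertex is an edge cut of size $3$, so $\lambda(G)\le 3$; the plan is to prove $\lambda(G)\ge 3$. First I would record the cheap lower bound $\lambda(G)\ge 2$: by Theorem~\ref{tm1.3} together with $|E(G)|=\tfrac32|V(G)|$ we get $\lambda(G)\ge \tfrac{3|V(G)|}{2(|V(G)|-1)}>\tfrac32$. Hence it remains only to exclude $\lambda(G)=2$.

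Suppose for contradiction that $G$ has an edge cut $C$ with $|C|=2$, arising from a partition $(A,B)$; we may take $C$ to be a minimum edge cut, so that $G[A]$ and $G[B]$ are both connected (the standard fact that a minimum edge cut leaves each side connected), and since $G$ is cubic $C$ is non-trivial. Lemma~\ref{lem3.1} then gives $|A_1|\ge 2$ and $|B_1|\ge 2$, which combined with $|C|=2$ forces $|A_1|=|B_1|=2$: the two cut edges form a perfect matching $u_1v_1,\;u_2v_2$ with $u_1\ne u_2$ and $v_1\ne v_2$, so $d_{G[C]}(u_i)=d_{G[C]}(v_i)=1$ and therefore $d_{G[A]}(u_1)=d_{G[A]}(u_2)=d_{G[B]}(v_1)=d_{G[B]}(v_2)=2$.

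Next I would carry out the network reduction exactly as in the proof of Theorem~\ref{tm3.66}, but retaining the true weights. Since the vertices of $A\setminus\{u_1,u_2\}$ have no neighbour in $B$, the principle of substitution lets me replace the subnetwork $G[A]$ by a single edge $u_1u_2$ of resistance $p:=\Omega_{G[A]}(u_1,u_2)$ and, likewise, $G[B]$ by an edge $v_1v_2$ of resistance $q:=\Omega_{G[B]}(v_1,v_2)$; the resulting network is a weighted $4$-cycle $u_1\,v_1\,v_2\,u_2\,u_1$ with resistances $1,q,1,p$ in this cyclic order, which is $\{u_1,v_1\}$-equivalent to $G$. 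Combining the series and parallel connections gives $\Omega(G)=\Omega_G(u_1,v_1)=\dfrac{p+q+1}{p+q+2}$. By Lemma~\ref{lem2.5}, $p\ge 1/W_{G[A]}(u_1)=\tfrac12$ and $q\ge\tfrac12$, so $p+q\ge 1$; since $t\mapsto \tfrac{t+1}{t+2}$ is increasing, $\Omega(G)\ge\tfrac23$. On the other hand, Lemma~\ref{lem2.2} gives $\Omega(G)=\dfrac{|V(G)|-1}{|E(G)|}=\dfrac{2(|V(G)|-1)}{3|V(G)|}<\tfrac23$, a contradiction. Hence $\lambda(G)\ne 2$, and therefore $\lambda(G)=3$.

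The one subtlety — and the only real obstacle — is that invoking Theorem~\ref{tm3.66} verbatim would yield only $\Omega(G)\ge\mathcal{F}(1,1)=\tfrac35$, which fails to contradict $\Omega(G)<\tfrac23$ because $\tfrac35<\tfrac23$. The extra margin comes precisely from keeping the genuine resistance $1$ on the cross edge $u_2v_2$ instead of the weakened value $\tfrac{1}{k-x-y+1}=\tfrac12$ used in that theorem's proof: this turns the reduced network into an honest $4$-cycle and sharpens the lower bound on $\Omega(G)$ to exactly $\tfrac23$, just enough to beat Foster's formula. The only auxiliary input beyond the lemmas already proved is the elementary observation that a minimum edge cut leaves both sides of the partition connected, which is what licenses the single-resistor substitutions.
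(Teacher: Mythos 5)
Your proposal is correct and follows essentially the same route as the paper: dismiss the shared-endpoint case via Lemma~\ref{lem3.1}, reduce to the weighted $4$-cycle by the principle of substitution, lower-bound the two substituted resistances, and contradict the Foster-based value $\Omega(G)<\tfrac23$ from Lemma~\ref{lem2.2}. The only difference is that you bound $p,q\ge\tfrac12$ via Lemma~\ref{lem2.5}, giving $\Omega(G)\ge\tfrac23$ (just enough, since the other inequality is strict), whereas the paper uses Lemma~\ref{lem2.6} to get $p,q\ge\tfrac23$ and hence $\Omega(G)\ge\tfrac{7}{10}$.
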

\begin{proof}
Let $G$ be a connected cubic equiarboreal graph. By Theorem \ref{tm1.3}, we have $\lambda(G) \geq 2.$
Hence we only need to prove that $\lambda(G) \neq 2$. By contradiction, suppose that there exists an edge cut $C=\{(u_1,v_1),(u_2,v_2)\}$ of $G$ with $|C|=2$. We consider the following two cases.

\textbf{Case 1.} $u_1=u_2$ or $v_1=v_2$.  This case does not hold directly by Lemma \ref{lem3.1}.

\textbf{Case 2.} $u_1 \neq u_2$ and $v_1 \neq v_2$. It can be seen that graph $G_1$ is $\{u_1,u_2\}$-equivalent to an edge $u_1u_2$ with weight $\Omega_{G_1}(u_1,u_2)$, and graph $G_2$ is $\{v_1,v_2\}$-equivalent to an edge $v_1v_2$ with weight $\Omega_{G_2}(v_1,v_2)$. By the principle of substitution,  the graph $G$ can be transformed into a weighted 4-cycle $C^{\omega}_4:u_1u_2v_2v_1$, with the weights satisfying:
$$R_{C^{\omega}_4}(u_1,v_1)=R_{C^{\omega}_4}(u_2,v_2)=1, \quad R_{C^{\omega}_4}(u_1,u_2)= \Omega_{G_1}(u_1,u_2), \quad R_{C^{\omega}_4}(v_1,v_2)= \Omega_{G_2}(v_1,v_2).$$
Then $G$ and $C^{\omega}_4$ are $\{u_1, u_2, v_1, v_2\}$-equivalent networks.\\
Note that the degrees of $u_1$ and $u_2$ are equal to 2 in $G_1$. By Lemma \ref{lem2.6}, we know that
$$ \Omega_{G_1}(u_1,u_2) \geq \frac{1}{d_{G_1}(u_1)+1}+\frac{1}{d_{G_1}(u_2)+1}=\frac{2}{3}.$$
Similarly, we have $ \Omega_{G_2}(v_1,v_2)\geq \frac{2}{3}.$ Due to the fact that $G$ is an equiarboreal graph, by Rayleigh's monotonicity law, we have
\begin{align}\label{eq3.1}
\Omega(G)&=\Omega_{G}(u_1,v_1)= \Omega_{C^{\omega}_4}(u_1,v_1)   \notag\\
&= \frac{R_{C^{\omega}_4}(u_1,u_2)+R_{C^{\omega}_4}(v_1,v_2)+1}{R_{C^{\omega}_4}(u_1,u_2)+R_{C^{\omega}_4}(v_1,v_2)+2}         \notag\\
&\geq   \frac{\frac{2}{3}+\frac{2}{3}+1}{\frac{2}{3}+\frac{2}{3}+2}           \notag\\
&=\frac{7}{10}.
\end{align}
On the other hand,  by Lemma \ref{lem2.2}, we have
$$\Omega(G)=\frac{|V(G)|-1}{|E(G)|}=\frac{2(|V(G)|-1)}{3|V(G)|} < \frac{2}{3}, $$
which yields a contradiction to Ineq. (\ref{eq3.1}).
Therefore, we conclude that $\lambda(G) \neq 2$ and complete the proof.
\end{proof}

For a cubic graph, the edge-connectivity is equal to its vertex-connectivity. Hence, Theorem \ref{tm3.1} leads to the following result.
\begin{cor}
Let $G$ be a connected cubic equiarboreal  graph. Then its vertex-connectivity equals to 3.
\end{cor}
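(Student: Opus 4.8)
The plan is to derive the corollary directly from Theorem~\ref{tm3.1} together with the classical fact that the vertex-connectivity $\kappa(G)$ and the edge-connectivity $\lambda(G)$ of a connected cubic graph coincide. Every cubic graph satisfies Whitney's inequality $\kappa(G)\le\lambda(G)\le 3$, and Theorem~\ref{tm3.1} gives $\lambda(G)=3$, so it suffices to show that $G$ admits no vertex cut of size at most $2$; this will force $\kappa(G)\ge 3$, hence $\kappa(G)=3$.

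To rule out small vertex cuts I would argue by edge counting. Suppose $S\subseteq V(G)$ is a vertex cut and let $H_1,\dots,H_m$ (with $m\ge 2$) be the components of $G-S$. Since $G$ is cubic, the number of edges joining $S$ to $V(G)\setminus S$ is at most $3$ when $|S|=1$, and at most $6$ when $|S|=2$ (and at most $4$ if the two vertices of $S$ are adjacent); moreover, since $G$ is connected, each $H_j$ is incident with at least one such edge. If $|S|=1$, the three edges at the cut vertex are distributed among $m\ge 2$ components, so some component is joined to the rest by a single edge, which is then a bridge, contradicting $\lambda(G)=3$. If $|S|=2$, distributing at most $6$ edges among the $m\ge 2$ components (each receiving at least one) shows that either some component is separated by at most $2$ edges — an edge cut of size $\le 2$, contradicting $\lambda(G)=3$ — or the only surviving possibility is $m=2$, the two vertices of $S$ non-adjacent, and the six edges split exactly $3{+}3$ between $H_1$ and $H_2$. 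In that case, if one vertex of $S$ sends all three of its edges to a single component, then the other vertex of $S$ is a cut vertex and we again obtain a bridge; otherwise each vertex of $S$ distributes its edges between the two components in the pattern $1{+}2$, and then grouping the vertex of $S$ that sends two edges to $H_1$ with $H_1$, and the other vertex of $S$ with $H_2$, exhibits a partition of $V(G)$ across exactly two edges, once more contradicting $\lambda(G)=3$. This exhausts all cases, so $\kappa(G)\ge 3$.

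The argument is mostly routine bookkeeping; the only step needing a little care is the boundary configuration in which the six edges leaving a $2$-element vertex cut split as $3{+}3$, where a crude count is inconclusive and one must relocate a vertex of the cut to one side to expose the small edge cut (or the hidden cut vertex). Alternatively, one may simply invoke the standard result that connected cubic graphs satisfy $\kappa=\lambda$, after which the corollary is an immediate consequence of Theorem~\ref{tm3.1}.
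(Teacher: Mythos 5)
Your proposal is correct and takes the same route as the paper: the paper's proof consists precisely of invoking the classical fact that $\kappa(G)=\lambda(G)$ for connected cubic graphs and combining it with Theorem~\ref{tm3.1}. Your edge-counting verification of that classical fact (including the $3{+}3$ boundary configuration) is sound but is simply a self-contained proof of the standard result the paper cites without proof.
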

\begin{thm}\label{tm3.55}
Let $G$ be a connected $4$-regular equiarboreal graph. Then $\lambda(G)=4$.
\end{thm}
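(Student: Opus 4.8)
The plan is to bypass the electrical-network machinery behind Theorem \ref{tm3.66} entirely and instead combine Godsil's counting bound with a parity obstruction; the latter is available here precisely because $k=4$ is even, in sharp contrast with the cubic case treated in Theorem \ref{tm3.1}. As a first step I would record the trivial upper bound $\lambda(G)\le\delta(G)=4$, witnessed by the edge cut consisting of all edges incident with a fixed vertex.

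Next I would produce the matching lower bound in two stages. Since $G$ is $4$-regular on $n:=|V(G)|$ vertices it has $m:=|E(G)|=2n$ edges, so applying Godsil's bound (Theorem \ref{tm1.3}) to $G$ itself, which is equiarboreal by hypothesis, gives
\[
\lambda(G)\ \ge\ \frac{m}{n-1}\ =\ \frac{2n}{n-1}\ =\ 2+\frac{2}{n-1}\ >\ 2,
\]
hence $\lambda(G)\ge 3$. Then I would invoke Lemma \ref{lem3.3}: as $G$ is $k$-regular with $k=4$ even, $\lambda(G)$ must be even. Combined with $\lambda(G)\ge 3$ this forces $\lambda(G)\ge 4$, and together with the upper bound above we conclude $\lambda(G)=4$.

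I do not expect any genuine obstacle in this case; the parity lemma does essentially all of the work. The only point requiring a little care is that Theorem \ref{tm1.3} has to be applied to the whole graph $G$ (so that $m=2n$ there and the resulting bound strictly exceeds $2$), rather than to an induced subgraph. It is worth noting that this short argument pins $\lambda(G)$ down exactly because $4$ is the \emph{unique} even integer in the interval $(2,4]$; for larger even values of $k$ the interval $(k/2,k]$ contains several even integers, so one cannot conclude $\lambda(G)=k$ from parity alone and must instead fall back on the sharper estimates supplied by Theorem \ref{tm3.66}.
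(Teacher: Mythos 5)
Your argument is correct and is essentially identical to the paper's own proof: both derive $\lambda(G)>2$ from Theorem \ref{tm1.3}, rule out $\lambda(G)=3$ via the parity constraint of Lemma \ref{lem3.3}, and conclude $\lambda(G)=4$ from the trivial upper bound. No further comment is needed.
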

\begin{proof}
By Theorem \ref{tm1.3}, we have $\lambda(G) > 2.$ Then $\lambda(G)=3$ is invalid by Lemma \ref{lem3.3}. Hence, we have $\lambda(G)=4$.
\end{proof}

\subsection{$5$-regular equiarboreal graphs}

\begin{figure*}[ht]
  \setlength{\abovecaptionskip}{0cm} %
  \centering
 $$\includegraphics[width=5.2in]{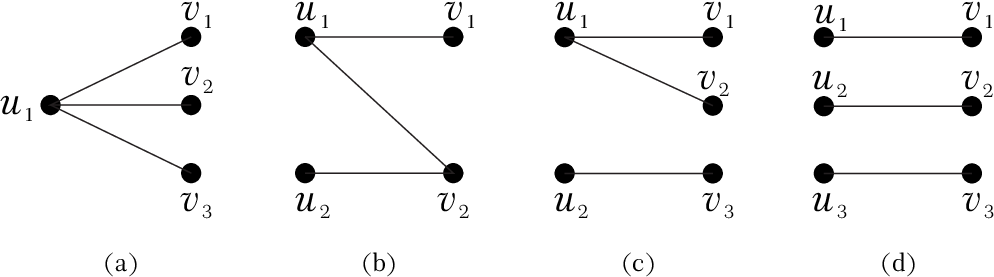}$$\\
 \caption{The corresponding graphs $G[C]$ for edge cuts $C$ with $|C|=3$.}    \label{Fig.1}
\end{figure*}

For a connected graph $G$, when the edge-connectivity $\lambda(G)=3$,  we can verify that there exist four types of edge cuts $C$ satisfying $|C|=3$, whose corresponding graphs $G[C]$ are shown in Fig. \ref{Fig.1}. When $\lambda(G)=4$, there exist ten types of edge cuts $C$ satisfying $|C|=4$, whose corresponding graphs $G[C]$ are shown in Fig. \ref{Fig.2}. In particular, $G[C]$ is called \emph{$K_2$ component-free} if it has no connected component isomorphic to $K_2$. We present the following result.

\begin{thm}\label{tm3.5}
Let $G$ be a connected $k$-regular equiarboreal graph with $k \geq 4$. Let $C$ be a non-trivial edge cut satisfying $|C| \leq k$. Then $G[C]$ is $K_2$ component-free.
\end{thm}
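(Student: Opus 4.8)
The plan is to argue by contradiction using the lower bound for $\Omega(G)$ supplied by Theorem \ref{tm3.66}. Suppose $G[C]$ has a connected component isomorphic to $K_2$. Then there is an edge $e = u_1v_1 \in C$ both of whose endpoints have degree $1$ in $G[C]$, i.e. $d_{G[C]}(u_1) = d_{G[C]}(v_1) = 1$. Since $k \geq 4 \geq 3$ and $C$ is a non-trivial edge cut with $|C| \leq k$, Theorem \ref{tm3.66} applies to the edge $e$ with $x = y = 1$, yielding $\Omega(G) \geq \mathcal{F}(1,1)$.

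Next I would simply evaluate $\mathcal{F}(1,1)$. With $x = y = 1$ we have $k - x - y + 1 = k - 1$, so the numerator of $\mathcal{F}(1,1)$ is $4(k-1)^2 - (k-1)^2 = 3(k-1)^2$ and the denominator is $2(k-1)^2(k+1) - k(k-1)^2 = (k-1)^2(k+2)$; hence $\mathcal{F}(1,1) = \frac{3}{k+2}$, and therefore $\Omega(G) \geq \frac{3}{k+2}$.

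On the other hand, since $G$ is $k$-regular, $|E(G)| = \frac{k|V(G)|}{2}$, so Lemma \ref{lem2.2} gives $\Omega(G) = \frac{|V(G)|-1}{|E(G)|} = \frac{2(|V(G)|-1)}{k|V(G)|} < \frac{2}{k}$, using $\frac{|V(G)|-1}{|V(G)|} < 1$. Combining the two estimates forces $\frac{3}{k+2} < \frac{2}{k}$, i.e. $3k < 2k+4$, i.e. $k < 4$, contradicting the hypothesis $k \geq 4$. (In the boundary case $k = 4$ the two bounds read $\frac12 = \mathcal{F}(1,1) \leq \Omega(G) < \frac12$, which is still a contradiction.) This contradiction shows that $G[C]$ cannot contain a $K_2$ component, which is the assertion.

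There is no real obstacle here once Theorem \ref{tm3.66} is available: the entire argument is the translation of ``$G[C]$ has a $K_2$ component'' into ``some edge of $C$ has $x = y = 1$'', followed by comparing the resulting value $\mathcal{F}(1,1) = \frac{3}{k+2}$ with the upper bound $\Omega(G) < \frac{2}{k}$ from Lemma \ref{lem2.2}. The only point deserving a line of care is that for $k = 4$ the comparison $\mathcal{F}(1,1) \le \tfrac{2}{k}$ is an equality, so the contradiction must be extracted from the strict inequality $\Omega(G) < \tfrac{2}{k}$ on the other side.
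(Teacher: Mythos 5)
Your proposal is correct and follows essentially the same route as the paper: translate the $K_2$ component into an edge with $d_{G[C]}(u_1)=d_{G[C]}(v_1)=1$, apply Theorem \ref{tm3.66} to get $\Omega(G)\geq \mathcal{F}(1,1)=\frac{3}{k+2}$, and contrast this with the strict upper bound $\Omega(G)<\frac{2}{k}$ from Lemma \ref{lem2.2}. Your explicit handling of the boundary case $k=4$ is a nice touch that the paper leaves implicit.
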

\begin{proof}
Suppose that $C = \{uv \in E(G) \mid u \in A, v \in B \}$ is a non-trivial edge cut with $|C| \leq k$. We argue by contradiction. Suppose that $G[C]$ is not $K_2$ component-free and let the edge $u_1v_1$ form a $K_2$ component. Then,  $d_{G[C]}(u_1)=d_{G[C]}(v_1)=1$. By Theorem \ref{tm3.66}, we have
\begin{equation*}\label{eq3.3}
\Omega(G) \geq \mathcal{F}(d_{G[C]}(u_1),d_{G[C]}(v_1)) = \mathcal{F}(1,1)= \frac{3}{k+2}.
\end{equation*}
Since $G$ is an equiarboreal graph, it follows by Lemma \ref{lem2.2} that
\begin{equation*}
\Omega(G)=\Omega_G(u_1,v_1) =\frac{2(|V(G)|-1)}{k|V(G)|}<\frac{2}{k},
\end{equation*}
achieving a desired contradiction due to $k \geq 4$, and complete the proof.
\end{proof}

\begin{figure*}[ht]
  \setlength{\abovecaptionskip}{0cm} %
  \centering
 $$\includegraphics[width=6.1in]{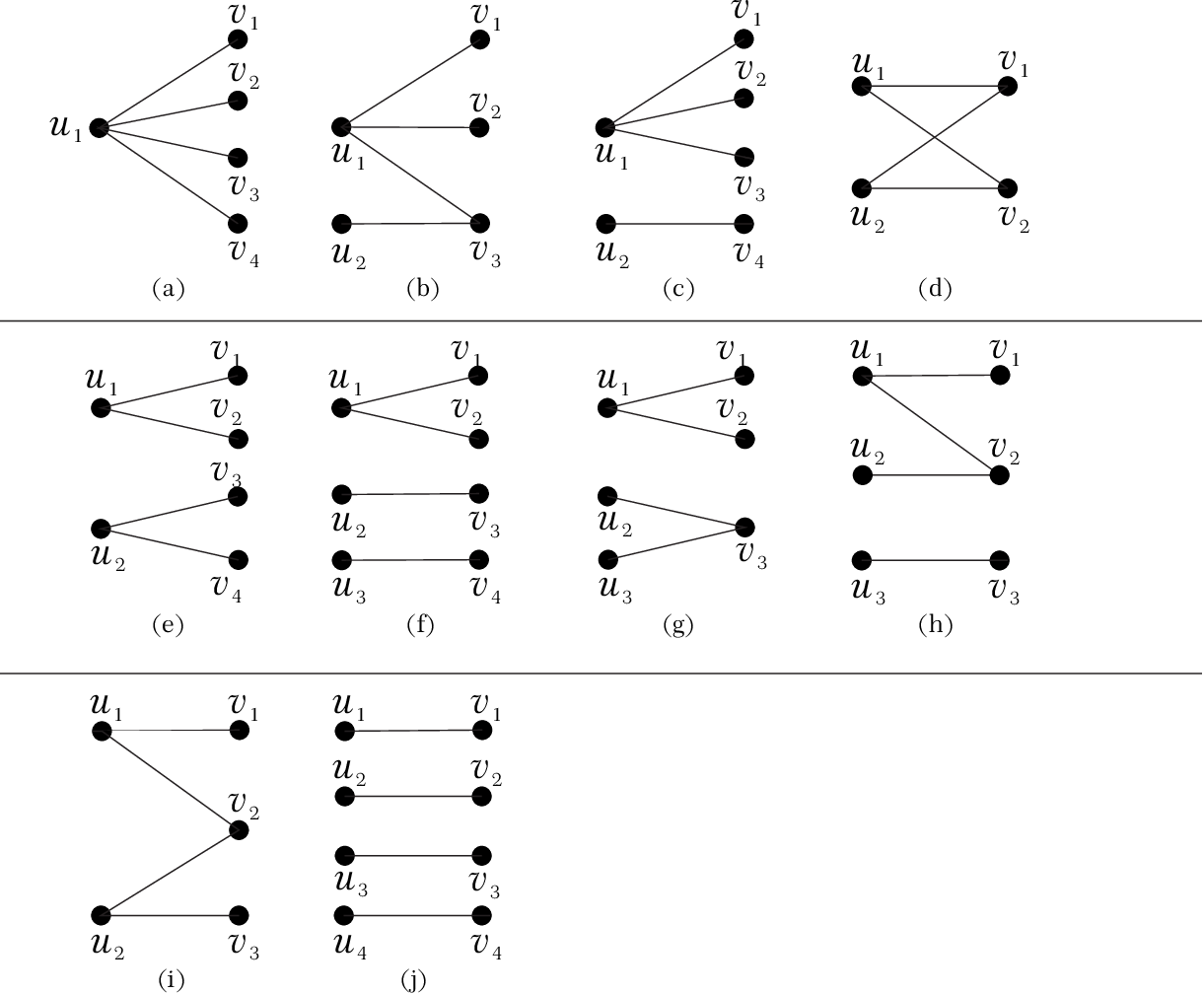}$$\\
 \caption{The corresponding graphs $G[C]$ for edge cuts $C$ with $|C|=4$.}    \label{Fig.2}
\end{figure*}

\begin{thm}\label{tm3.6}
Let $G$ be a connected $5$-regular equiarboreal graph. Then $\lambda(G)=5$.
\end{thm}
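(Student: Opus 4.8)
The plan is to pin $\lambda(G)$ between $3$ and $5$ and then eliminate the values $3$ and $4$. Since $\lambda(G)\le\delta(G)=5$, and since Theorem~\ref{tm1.3} gives $\lambda(G)\ge\frac{|E(G)|}{|V(G)|-1}=\frac{5|V(G)|}{2(|V(G)|-1)}>\frac{5}{2}$, hence $\lambda(G)\ge3$, it suffices to rule out $\lambda(G)=3$ and $\lambda(G)=4$. The uniform mechanism is the identity of Lemma~\ref{lem2.2}, namely $\Omega(G)=\frac{2(|V(G)|-1)}{5|V(G)|}<\frac{2}{5}$: for each hypothetical minimum edge cut $C$ with $|C|\in\{3,4\}$ I will produce a lower bound $\Omega(G)\ge\frac{2}{5}$, a contradiction. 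Note that in a $5$-regular graph a trivial edge cut has $5$ edges, so any cut with $|C|\le4$ is non-trivial; thus Lemma~\ref{lem3.1} and Theorem~\ref{tm3.5} apply, giving $|A_1|,|B_1|\ge2$ and that $G[C]$ is $K_2$ component-free.

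For $\lambda(G)=3$: running through the four types in Fig.~\ref{Fig.1}, Theorem~\ref{tm3.5} excludes $3K_2$ and $K_2\cup P_3$, and Lemma~\ref{lem3.1} excludes $K_{1,3}$ (one side has a single vertex), leaving only $G[C]\cong P_4$, say $abcd$ with cut-sides $\{a,c\}$ and $\{b,d\}$. I would test the central edge $e=bc$ (so $d_{G[C]}(b)=d_{G[C]}(c)=2$). As in the proof of Theorem~\ref{tm3.66} --- here with no vertex identifications and with \emph{no} edge joining the leftover terminals $a$ and $d$ --- the principle of substitution expresses $G$ on the four terminals as $K_4$ minus the edge $ad$; bounding the two internal resistances $\Omega_{G[A]}(c,a)$ and $\Omega_{G[B]}(b,d)$ below by $\frac{1}{3}$ via Lemma~\ref{lem2.5} (since $c$, resp.\ $b$, has degree $5-2=3$ inside its side), Rayleigh's monotonicity law gives $\Omega(G)\ge\Omega_N(b,c)$ for an explicit weighted $4$-vertex network $N$, and a one-line Kirchhoff computation yields $\Omega_N(b,c)=\frac{2}{5}$. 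Contradiction.

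For $\lambda(G)=4$: in Fig.~\ref{Fig.2}, the four types with a $K_2$ component are excluded by Theorem~\ref{tm3.5} and the star $K_{1,4}$ by Lemma~\ref{lem3.1}, leaving $G[C]\in\{C_4,\ P_5,\ S(1,1,2),\ 2P_3\}$, with $2P_3$ in either of its two bipartition-orientations (both centres on one side, or on opposite sides). For each, I would select a suitable test edge $e\in C$, carry out the reduction of Theorem~\ref{tm3.66} (identifying $A_1\setminus\{p_A\}$ and $B_1\setminus\{p_B\}$ to single vertices, but keeping the \emph{exact} multiplicities of the parallel cut-edges rather than upper-bounding them), and bound the two internal resistances below using Lemma~\ref{lem2.5}, or the sharper Lemma~\ref{lem2.6} when both endpoints on that side have small degree inside it; a Kirchhoff computation then gives $\Omega(G)\ge\Omega_N(p_A,p_B)$ with $\Omega_N(p_A,p_B)>\frac{2}{5}$. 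Concretely: for $C_4$, test any cut edge (both internal resistances $\ge\frac{1}{2}$ by Lemma~\ref{lem2.6}), obtaining $\Omega(G)\ge\frac{5}{12}$; for $P_5$, test a pendant edge, obtaining $\Omega(G)\ge\frac{3}{7}$; for $S(1,1,2)$, test the edge from its degree-$2$ vertex to the far leaf, obtaining $\Omega(G)\ge\frac{163}{328}$; for $2P_3$, test an edge from a leaf to its centre, obtaining $\Omega(G)\ge\frac{3}{7}$ (centres on one side) or $\Omega(G)\ge\frac{191}{459}$ (centres on opposite sides). All of these exceed $\frac{2}{5}$, contradicting Lemma~\ref{lem2.2}, so $\lambda(G)=5$. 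Combining with Theorem~\ref{tm3.55} and the remark that cubic graphs are $3$-edge-connected (Theorem~\ref{tm3.1}) would round out the small cases, but here only the $k=5$ statement is needed.

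The point where care is required is that Theorem~\ref{tm3.66} by itself does not close the argument: one checks that $\mathcal{F}(x,y)>\frac{2}{5}$ holds exactly when $(x,y)=(1,1)$, that is, only when the tested edge of $G[C]$ is a $K_2$ component --- precisely the configuration forbidden by Theorem~\ref{tm3.5}. The extra slack must therefore be squeezed out of the reduced network in each admissible shape, and the genuinely delicate step is the choice of test edge: for some cut edges (for instance the edge joining the degree-$3$ and degree-$2$ vertices of $S(1,1,2)$, or certain edges of $P_5$) the resulting bound drops below $\frac{2}{5}$ and the argument fails, whereas other edges of the same $G[C]$ succeed. Getting this edge selection right across the handful of cases in Figs.~\ref{Fig.1} and \ref{Fig.2} is the crux; the individual Kirchhoff computations are routine.
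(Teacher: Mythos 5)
Your proposal is correct and follows essentially the same route as the paper: the same reduction to the admissible cut shapes via Lemma \ref{lem3.1} and Theorem \ref{tm3.5}, and the same substitution-plus-Rayleigh lower bounds on $\Omega(G)$ played against the strict Foster bound $\Omega(G)<\tfrac{2}{5}$ from Lemma \ref{lem2.2}, with your numerical constants differing only because you sometimes invoke Lemma \ref{lem2.6} where the paper uses Lemma \ref{lem2.5}. The one genuine local divergence is the $P_4$ cut of size $3$, where the paper instead compares the resistances across two distinct cut edges and shows they cannot be equal, whereas you derive the exactly tight (but still contradictory, since the Foster inequality is strict) bound $\Omega(G)\ge\tfrac{2}{5}$ --- both arguments are valid.
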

\begin{proof}
Let $G$ be a connected 5-regular equiarboreal graph. By Theorem \ref{tm1.3}, we have $\lambda(G) \geq 3.$ In the following, we show that $\lambda(G) \neq 3$ and $\lambda(G) \neq 4$ in sequence. First, for a contradiction, suppose $\lambda(G)= 3$. Then there exist four types of edge cuts $C$ satisfying $|C|=3$, as shown in Fig. \ref{Fig.1}. According to Theorem \ref{tm3.5}, $G[C]$ does not contain a $K_2$ component. Therefore, only the two cases depicted in Fig. \ref{Fig.1}(a) and (b) need to be considered.

\textbf{Case 1.} $G$ contains an edge cut as shown in Fig. \ref{Fig.1}(a). By Lemma \ref{lem3.1}, this case can not hold.

\begin{figure*}[ht]
  \setlength{\abovecaptionskip}{0cm} %
  \centering
 $$\includegraphics[width=5in]{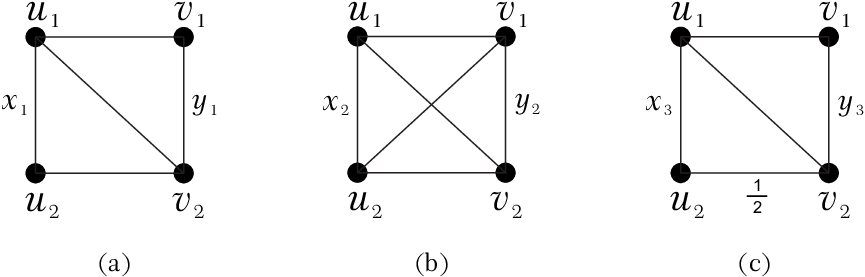}$$\\
 \caption{(a) The network $M_1$, (b) The network $M_2$, (c) The network $M_3$.}    \label{Fig.3}
\end{figure*}

\textbf{Case 2.} $G$ contains an edge cut as shown in Fig. \ref{Fig.1}(b). By the principle of substitution, $G$ can be transformed into an equivalent network $M_1$ as shown in Fig. \ref{Fig.3}(a), and the weights satisfy
$$R_{M_1}(u_1,v_1)=R_{M_1}(u_1,v_2)=R_{M_1}(u_2,v_2)=1,$$
$$R_{M_1}(u_1,u_2)=\Omega_{G_1}(u_1,u_2)=x_1, \quad R_{M_1}(v_1,v_2)=\Omega_{G_2}(v_1,v_2)=y_1.$$
 By the series and parallel connections, we have
\begin{align}
\Omega_{G}(u_1,v_1)- \Omega_{G}(u_1,v_2)&= \Omega_{M_1}(u_1,v_1)- \Omega_{M_1}(u_1,v_2)  \notag\\
&= \frac{(1+x_1)+(2+x_1)y_1}{x_1y_1+2x_1+2y_1+3} -\frac{(1+x_1)(1+y_1)}{x_1y_1+2x_1+2y_1+3}        \notag\\
&= \frac{y_1}{x_1y_1+2x_1+2y_1+3}          \notag\\
&>0, \ (\mbox{since} \ x_1,y_1 >0)\notag
\end{align}
which contradicts the property of $G$ as an equiarboreal graph.

Hence, we conclude that $\lambda(G) \neq 3$. Again by contradiction,  assume that $\lambda(G)= 4$. Then there exist ten types of edge cuts $C$ satisfying $|C|=4$, as shown in Fig \ref{Fig.2}.
By Theorem \ref{tm3.5}, it suffices to consider the six edge cut cases as shown in Fig. \ref{Fig.2}(a), (b), (d), (e), (g) and (i). On the other hand, according to Lemma \ref{lem2.2}, we have
\begin{equation}\label{eq3.4}
\Omega(G)=\frac{|V(G)|-1}{|E(G)|}=\frac{2(|V(G)|-1)}{5|V(G)|} < \frac{2}{5}.
\end{equation}
\textbf{Case 1.} $G$ contains an edge cut as shown in Fig. \ref{Fig.2}(a). This case does not hold directly from Lemma \ref{lem3.1}.

\textbf{Case 2.} $G$ contains an edge cut as shown in Fig. \ref{Fig.2}(b). Let $G_3$ be a subgraph induced by vertex set $B \cup  \{u_1\}$ of $G$. Since graph $G_1$ is $\{u_1,u_2\}$-equivalent to an edge $u_1u_2$ with weight $\Omega_{G_1}(u_1,u_2)$, and graph $G_3$ is $\{u_1,v_3\}$-equivalent to an edge $u_1v_3$ with weight $\Omega_{G_3}(u_1,v_3)$, it follows from the principle of substitution that $G$ can be transformed into a weighted 3-cycle $C^{\omega}_3: u_1u_2v_3$, and its edge weights satisfy
$$R_{C^{\omega}_3}(u_1,v_2)=\Omega_{G_1}(u_1,u_2), \quad R_{C^{\omega}_3}(u_2,v_3)= 1, \quad R_{C^{\omega}_3}(u_1,v_3)= \Omega_{G_3}(u_1,v_3).$$
Then $G$ and $C^{\omega}_3$ are $\{u_1,u_2,v_3\}$-equivalent networks.\\
Since $G$ is an equiarboreal graph, we have
$$ \Omega(G)=\Omega_G(u_2,v_3)=\Omega_{C^{\omega}_3}(u_2,v_3)=  \frac{\Omega_{G_1}(u_1,u_2)+\Omega_{G_3}(u_1,v_3)}{\Omega_{G_1}(u_1,u_2)+\Omega_{G_3}(u_1,v_3)+1}.$$
Observe that in $G_1$ the degrees of $u_1$ and $u_2$ are $2$ and $4$, respectively. By Lemma \ref{lem2.6}, we know that
$$ \Omega_{G_1}(u_1,u_2) \geq \frac{1}{d_{G_1}(u_1)+1}+\frac{1}{d_{G_1}(u_2)+1}=\frac{8}{15}.$$
Similarly,  $ \Omega_{G_3}(u_1,v_3)\geq \frac{9}{20}$ holds. By Rayleigh's monotonicity law, we have
$$ \Omega(G) \geq \frac{\frac{8}{15}+\frac{9}{20}}{\frac{8}{15}+\frac{9}{20}+1}= \frac{59}{119},$$
a contradiction to Ineq. (\ref{eq3.4}).

\textbf{Case 3.} $G$ contains an edge cut as shown in Fig. \ref{Fig.2}(d). By the principle of substitution, $G$ can be transformed into an equivalent weighted complete graph $M_2$ as shown in Fig. \ref{Fig.3}(b), and the weights satisfy
$$R_{M_2}(u_1,v_1)=R_{M_2}(u_1,v_2)=R_{M_2}(u_2,v_1)=R_{M_2}(u_2,v_2)=1,$$
$$R_{M_2}(u_1,u_2)=\Omega_{G_1}(u_1,u_2)=x_2, \quad R_{M_2}(v_1,v_2)=\Omega_{G_2}(v_1,v_2)=y_2.$$
Note that $M_2$ contains a subgraph $K_{2,2}$ with partite sets $\{u_1,u_2\}$ and $\{v_1,v_2\}$. By using the complete bipartite graph-double star transformation, it is not difficult to obtain
$$\Omega(G)=\Omega_G(u_1,v_1)=\Omega_{M_2}(u_1,v_1)=\frac{1+2x_2}{4+4x_2}+\frac{1+2y_2}{4+4y_2}-\frac{1}{4}.$$
Then, the degrees of $u_1$ and $u_2$ are equal to $3$ in $G_1$. According to Lemma \ref{lem2.6}, we get
$$ \Omega_{G_1}(u_1,u_2) \geq \frac{1}{d_{G_1}(u_1)+1}+\frac{1}{d_{G_1}(u_2)+1}=\frac{1}{2}.$$
Similarly, we can get $ \Omega_{G_2}(u_2,v_2)\geq \frac{1}{2}$. By Rayleigh's monotonicity law, we have
$$ \Omega(G) \geq \frac{1}{3}+\frac{1}{3}-\frac{1}{4} =\frac{5}{12},$$
a contradiction to Ineq. (\ref{eq3.4}).

\textbf{Case 4.} $G$ contains an edge cut as shown in Fig. \ref{Fig.2}(e). Let $G^*$ be the weighted graph obtained from $G$ by identifying vertices $\{v_2,v_3,v_4\}$  as vertex $v_2$. By Lemma \ref{lem2.4}, we have
\begin{equation}\label{eq3.5}
\Omega(G)=\Omega_{G}(u_1,v_1)\geq\Omega_{G^*}(u_1,v_1).
\end{equation}
It can be seen that $C^{*}=\{u_1v_1, u_1v_2,u_2v_2\}$ is an edge cut of graph $G^*$. Suppose that $G^*_1=G^*[A]$ and $G^*_2=G^*[B\setminus \{v_3,v_4\}]$. By the principle of substitution, the graph $G^*$ can be reduced to an equivalent network $M_3$ as shown in Fig. \ref{Fig.3}(c), and the weights satisfy
$$R_{M_3}(u_1,v_1)=R_{M_3}(u_1,v_2)=1, \quad R_{M_3}(u_2,v_2)=\frac{1}{2}.$$
$$R_{M_3}(u_1,u_2)= \Omega_{G^*_1}(u_1,u_2)=x_3, \quad R_{M_3}(v_1,v_2)= \Omega_{G^*_2}(v_1,v_2)=y_3.$$
According to Lemma \ref{lem2.5}, for graphs $G^*_1$ and $G^*_2$, we have
$$ \Omega_{G^*_1}(u_1,u_2) \geq \frac{1}{W_{G^*_1}(u_1)}=\frac{1}{3}, \quad    \Omega_{G^*_2}(v_1,v_2) \geq \frac{1}{W_{G^*_2}(v_1)}=\frac{1}{4}.     $$
Then by Rayleigh's monotonicity law, we show that
\begin{align}\label{eq3.6}
\Omega_{G^*}(u_1,v_1)&= \Omega_{M_3}(u_1,v_1)   \notag\\
&= \frac{2x_3+y_3(2x_3+3)+1}{2x_3+(2x_3+3)(y_3+1)+1}         \notag\\
&\geq  \dfrac{\frac{2}{3}+\frac{1}{4}(3+\frac{2}{3})+1}{\frac{2}{3}+\frac{5}{4}(3+\frac{2}{3})+1}\notag\\
&=\frac{31}{75}.
\end{align}
Comparing Ineqs. (\ref{eq3.5}) and (\ref{eq3.6}), we have
\begin{equation*}
\Omega(G) \geq \frac{31}{75},
\end{equation*}
a contradiction to Ineq. (\ref{eq3.4}).

\textbf{Case 5.} $G$ contains an edge cut as shown in Fig. \ref{Fig.2}(g).  For this case, we can identify vertices $\{u_2,u_3\}$ and $\{v_2,v_3\}$ of $G$  as vertices $u_2$ and $v_2$, respectively, then obtain the graph $G^*$.  Similar to the analysis in Case 4, $G^*$ can be simplified to the same network $M_3$ shown in Fig. \ref{Fig.3}(c), which leads to a contradiction of Ineq.~(\ref{eq3.4}).

\textbf{Case 6.} $G$ contains an edge cut as shown in Fig. \ref{Fig.2}(i).  For this case, we can identify vertices $\{v_2,v_3\}$  of $G$  as vertex $v_2$, and obtain the graph $G^*$.  Similarly, $G^*$ can be simplified to the same network $M_3$ shown in Fig. \ref{Fig.3}(c), leading to a contradiction.

Based on the above analysis, $\lambda(G) \neq 3$ and $\lambda(G) \neq 4$. Thus, we conclude that $\lambda(G) =5$, which completes the proof.
\end{proof}

In the following, we focus on the $k$-regular equiarboreal  graphs with $k \geq 6$, for which some results are presented. Let $C$ be an edge cut of $G$. For integers $x,y \geq 1$, $G[C]$ is called \emph{strongly $S_{x,y}$-free}, if there does not exist edge $(u,v) \in C$ such that $d_{G[C]}(u) = x+1$ and $d_{G[C]}(v) = y+1$. For example, the graph as shown in Fig. \ref{Fig.2}(b) is not strongly $S_{2,1}$-free.

\begin{thm}\label{tm3.13}
Let $G$ be a connected $k$-regular equiarboreal graph with $k \geq 7$. Let $C$ be a non-trivial edge cut satisfying $|C| \leq k$. Then for all integers $x,y \geq 1$ satisfying $ x+y \leq k- \sqrt{k}-2$, $G[C]$ is strongly $S_{x,y}$-free.
\end{thm}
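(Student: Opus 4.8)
The plan is to argue by contradiction, in the same spirit as Theorems \ref{tm3.5} and \ref{tm3.6}. Suppose $G[C]$ is not strongly $S_{x,y}$-free, so there is an edge $u_1v_1 \in C$ with $d_{G[C]}(u_1) = x+1$ and $d_{G[C]}(v_1) = y+1$. First I would check the side conditions needed to invoke Theorem \ref{tm3.66} with these two degrees: the edge $u_1v_1$ is counted in both $d_{G[C]}(u_1)$ and $d_{G[C]}(v_1)$, so the number of distinct edges of $C$ meeting $\{u_1,v_1\}$ is $(x+1)+(y+1)-1 = x+y+1 \le |C|$; and $1 \le x+1,\,y+1 \le k-1$ follows from $x,y \ge 1$ and $x+y \le k-\sqrt{k}-2$. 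Theorem \ref{tm3.66} then gives $\Omega(G) \ge \mathcal{F}(x+1,y+1)$, while Lemma \ref{lem2.2} gives $\Omega(G) = \frac{2(|V(G)|-1)}{k|V(G)|} < \frac{2}{k}$. So it suffices to prove that $\mathcal{F}(x+1,y+1) \ge \frac{2}{k}$ whenever $x,y \ge 1$ and $x+y \le k-\sqrt{k}-2$.

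The computational core is a change of variables. Write $a = k-x-1$, $b = k-y-1$ and $c = k-x-y-1$, so that $\mathcal{F}(x+1,y+1) = \dfrac{4ab-c^2}{2ab(k+1)-kc^2}$. The hypothesis $x+y \le k-\sqrt{k}-2$ is exactly $c \ge \sqrt{k}+1 > 0$. Two structural facts make the estimate transparent: $a+b = k+c-1$, and $c = a-y = b-x < \min\{a,b\}$ since $x,y \ge 1$. Combining $0 < c < \min\{a,b\}$ gives $c^2 < ab$, so the numerator satisfies $4ab-c^2 > 3ab > 0$ and the denominator $2ab(k+1)-kc^2 > 2ab(k+1)-kab = (k+2)ab > 0$. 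With both numerator and denominator positive, clearing denominators does not reverse the inequality, and $\mathcal{F}(x+1,y+1) \ge \frac{2}{k}$ simplifies, after cancellation, to the clean inequality $kc^2 \ge 4ab$.

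To finish, I would bound $4ab \le (a+b)^2 = (k+c-1)^2$ by AM--GM, so it is enough to establish $kc^2 \ge (k+c-1)^2$. Since both sides are squares of positive quantities, this is equivalent to $\sqrt{k}\,c \ge k+c-1$, i.e. $c(\sqrt{k}-1) \ge k-1 = (\sqrt{k}-1)(\sqrt{k}+1)$, i.e. $c \ge \sqrt{k}+1$ — which is precisely the hypothesis. Hence $\Omega(G) \ge \mathcal{F}(x+1,y+1) \ge \frac{2}{k} > \Omega(G)$, a contradiction, completing the proof.

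I do not anticipate a genuine obstacle here: once Theorem \ref{tm3.66} is available, the argument is a short chain of elementary inequalities. The only points demanding care are bookkeeping — verifying the hypotheses of Theorem \ref{tm3.66}, confirming positivity of the numerator and denominator of $\mathcal{F}(x+1,y+1)$ so that the reduction to $kc^2 \ge 4ab$ is valid, and observing that the threshold $x+y \le k-\sqrt{k}-2$ is exactly (and no more than) what is needed to push $c$ past $\sqrt{k}+1$. It is also worth noting in passing that this explains the restriction $k \ge 7$: for $k \le 6$ there is no integer pair with $x,y \ge 1$ and $x+y \le k-\sqrt{k}-2$, so the claim is vacuous.
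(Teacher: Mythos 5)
Your proposal is correct and follows essentially the same route as the paper: invoke Theorem \ref{tm3.66} together with Lemma \ref{lem2.2}, reduce $\mathcal{F}(x+1,y+1)\ge \tfrac{2}{k}$ to $kc^2\ge 4ab$ with $a=k-x-1$, $b=k-y-1$, $c=k-x-y-1$, bound $4ab\le (a+b)^2=(k+c-1)^2$ by AM--GM, and close with $c\ge\sqrt{k}+1$, which is exactly the paper's Claim \ref{claim4} in the variables $t=a+b$, $s=t-k+1=c$. The only (welcome) difference is that you verify positivity of the numerator and denominator directly from $0<c<\min\{a,b\}$, where the paper defers the denominator's positivity to Appendix \ref{appa}.
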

\begin{proof}
Let $C = \{uv\in E(G)\mid u\in A, v\in B \}$ be a non-trivial edge cut of $G$ with $|C| \leq k$. Suppose to the contrary that there exist two integers $x_1,y_1 \geq 1$ satisfying $ x_1+y_1 \leq k- \sqrt{k}-2$ such that $G[C]$ is not strongly $S_{x_1,y_1}$-free. Thus, there exists an edge $e=(u_1,v_1) \in C$ such that $d_{G[C]}(u_1)=x_1+1$ and $d_{G[C]}(v_1)=y_1+1$. By Theorem \ref{tm3.66}, we have
\begin{equation}\label{eq3.66}
\Omega(G) \geq \mathcal{F}(d_{G[C]}(u_1),d_{G[C]}(v_1)) = \mathcal{F}(x_1+1,y_1+1).
\end{equation}
We state the following claim.
\begin{cla}\label{claim4}
For integers $x,y \geq 1$ satisfying $ x+y \leq k- \sqrt{k}-2$, then $\mathcal{F}(x+1,y+1) \geq \frac{2}{k}$.
\end{cla}
\emph{Proof of Claim \ref{claim4}.} By computing, we have
\begin{equation}\label{eq3.16}
\mathcal{F}(x+1,y+1)-\frac{2}{k}=\dfrac{k(k-x-y-1)^2-4(k-x-1)(k-y-1)}{k[2(k-x-1)(k-y-1)(k+1)-k(k-x-y-1)^{2}]},
\end{equation}
where the denominator of Eq. (\ref{eq3.16}) is strictly  greater than 0 (see its justification in Appendix \ref{appa}). To prove Claim \ref{claim4}, it is sufficient to show that
\begin{equation}\label{eq3.17}
k(k-x-y-1)^2-4(k-x-1)(k-y-1)\geq 0,
\end{equation}
Let $a=k-x-1$, $b=k-y-1$. Then Ineq. (\ref{eq3.17}) can be written as
\begin{equation*}
k(a+b-k+1)^2\geq 4ab.
\end{equation*}
Let $t=a+b$.  We know that  the maximum value of $ab$ is $\frac{t^2}{4}$ (achieved when $a=b=\frac{t}{2}$). Thus, if we can show
\begin{equation}\label{eq3.177}
k(t-k+1)^2\geq t^2,
\end{equation}
then Ineq. (\ref{eq3.17}) naturally holds.
Since $x+y \leq k- \sqrt{k}-2$, we have $t \geq k+\sqrt{k}$. Set $s=t-k+1$, consequently $s \geq \sqrt{k}+1$. Then Ineq. (\ref{eq3.177}) can be expressed as
$$ks^2 \geq (s+k-1)^2,$$
which equivalent to
\begin{equation}\label{eq3.188}
s^2-2s-(k-1) \geq 0.
\end{equation}
In fact, Ineq. (\ref{eq3.188}) follows directly from the condition $s \geq \sqrt{k}+1$.  Hence, Ineq. (\ref{eq3.17}) holds. The Claim \ref{claim4} is proved.\\
On the other hand, Lemma \ref{lem2.2} and the assumption that $G$ is equiarboreal yield
\begin{equation}\label{eq3.21}
\Omega(G) =\frac{|V(G)|-1}{|E(G)|}=\frac{2(|V(G)|-1)}{k|V(G)|}<\frac{2}{k}.
\end{equation}
However, combining Claim \ref{claim4} with Ineq. (\ref{eq3.66}), we have
\begin{equation*}
 \Omega(G) \geq \frac{2}{k}.
\end{equation*}
which contradicts Ineq. (\ref{eq3.21}).  Therefore, we conclude that for $x,y \geq 1$ satisfying $ x+y \leq k-\sqrt{k}-2$, $G[C]$ is strongly $S_{x,y}$-free. The proof is complete.
\end{proof}

Let $C$ be an edge cut of $G$. For an integer $x \geq 3$, $G[C]$ is called \emph{strongly $S_x$-free}, if there does not exist vertex $u \in V(G[C])$ satisfying the following two conditions simultaneously: (i) $S_x$ is the subgraph of $G[C]$ induced by $N_{G[C]}[u]$; (ii) at least one leaf vertex of $S_x$ has degree 1 in $G[C]$. For example, the graph  as shown in Fig. \ref{Fig.2}(b) is not strongly $S_4$-free.

\begin{thm}\label{cor3.8}
Let $G$ be a connected $k$-regular equiarboreal graph.   Let $C$ be an edge cut of $G$.              Then \\
(i) If $k \geq 6$ and $|C| \leq k-1$, then for integer $x \geq 3$, $G[C]$ is strongly $S_{x}$-free.\\
(ii) If $k \geq 8$ and $C$ is a non-trivial edge cut with $|C| \leq k$, then for integer $x \geq 3$, $G[C]$ is strongly $S_{x}$-free.
\end{thm}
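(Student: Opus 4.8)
The plan is to convert the conclusion into a statement about degrees in $G[C]$ and then run a case analysis on one such degree. Since $G[C]$ is bipartite with parts $A_1\subseteq A$ and $B_1\subseteq B$, for every $u\in V(G[C])$ the subgraph of $G[C]$ induced by $N_{G[C]}[u]$ is automatically a star on $d_{G[C]}(u)+1$ vertices. Hence, for fixed $x\ge 3$, $G[C]$ fails to be strongly $S_x$-free exactly when some vertex $u$ with $d_{G[C]}(u)=x-1$ has a $G[C]$-neighbour of degree $1$; so the claim ``for all $x\ge3$'' is equivalent to the single statement that no vertex $u$ with $d_{G[C]}(u)\ge 2$ has a $G[C]$-neighbour of degree $1$. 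Suppose for contradiction that $uv\in C$ with $u\in A$, $v\in B$, $s:=d_{G[C]}(u)\ge 2$ and $d_{G[C]}(v)=1$. In case (i) the bound $|C|\le k-1$ automatically forces $C$ to be non-trivial, so in both cases Lemma \ref{lem3.1}, Theorem \ref{tm3.5} and Theorem \ref{tm3.66} are available. The contradiction will always come from exhibiting an edge $e\in C$ for which Theorem \ref{tm3.66} gives $\Omega(G)\ge\mathcal F(\cdot,\cdot)\ge\frac2k$, against $\Omega(G)=\frac{2(|V(G)|-1)}{k|V(G)|}<\frac2k$ from Lemma \ref{lem2.2}; recall from Claim \ref{claim4} that $\mathcal F(p,q)\ge\frac2k$ is equivalent to $k(k-p-q+1)^2\ge 4(k-p)(k-q)$.

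If $s\le k-4$, take $e=uv$: Theorem \ref{tm3.66} gives $\Omega(G)\ge\mathcal F(s,1)$, and $\mathcal F(s,1)\ge\frac2k$ reduces to $k(k-s)\ge 4(k-1)$, i.e. $k-s\ge 4-\frac4k$, which holds because $k-s\ge 4$ is an integer inequality; this is the contradiction. Now assume $s\in\{k-3,k-2,k-1\}$. Let $H$ be the subgraph of $G[C]$ formed by the cut edges not incident with $u$; it is bipartite between $A_1\setminus\{u\}$ and $B_1$ and has $|E(H)|=|C|-s$ edges, which is at most $3$ in case (ii) and at most $2$ in case (i) (and at most $1$ once $s\ge k-2$). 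Moreover $|E(H)|\ge 1$, since $|E(H)|=0$ would give $A_1=\{u\}$, contradicting Lemma \ref{lem3.1}. For each edge $u'w\in E(H)$ one has $d_{G[C]}(u')=\deg_H(u')\le 3$, while $d_{G[C]}(w)$ equals $\deg_H(w)$, augmented by $1$ when $w\in N_{G[C]}(u)$; since $H$ has at most $3$ edges and $G$ is simple (so a vertex of $H$-degree $3$ uses all of $E(H)$ and its $H$-neighbours therefore have $H$-degree $1$), one checks that $\bigl(d_{G[C]}(u'),d_{G[C]}(w)\bigr)$ must be one of $(1,2),(2,1),(1,3),(3,1),(2,2),(2,3),(3,2),(1,4)$ --- the pair $(1,1)$ being impossible, since then $u'w$ would be a $K_2$-component of $G[C]$, contrary to Theorem \ref{tm3.5}. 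Substituting each of these pairs into $k(k-p-q+1)^2\ge 4(k-p)(k-q)$ shows $\mathcal F(p,q)\ge\frac2k$ for all of them when $k\ge 8$, and for the shorter list $(1,2),(2,1),(2,2),(1,3)$ --- the only pairs that can occur when $|E(H)|\le 2$ --- already when $k\ge 7$; taking $e=u'w$ gives the contradiction. This settles case (ii) for all $k\ge 8$ and case (i) for all $k\ge 7$.

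It remains to treat $k=6$ in case (i), where the above leaves exactly the configurations in which every edge of $H$ is of type $(2,2)$ or $(1,3)$. Unravelling, these are $s=3$ with $H$ a path through two neighbours of $u$ (so $G[C]$ is a $K_{2,2}$ with one extra pendant edge) and $H$ a path through one neighbour of $u$ (so $G[C]$ is the double star $S_{2,2}$, both of whose centres have degree $3$). For these I would compute $\Omega(G)=\Omega_G(u,v)$ directly: use the principle of substitution to replace $G[A]$ and $G[B]$ by their $A_1$- and $B_1$-equivalent networks, bound each edge weight of these networks from below by applying Lemma \ref{lem2.6} inside $G[A]$ and $G[B]$ (the relevant vertices there have degrees $3$, $4$ and $5$, as prescribed by the cut), and then invoke Rayleigh's monotonicity law: $\Omega_G(u,v)$ is at least the resistance between $u$ and $v$ in the resulting small weighted network, which a short series/parallel reduction shows is strictly greater than $\frac13=\frac2k$ --- contradicting Lemma \ref{lem2.2}.

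I expect two things to be the real work. First, the bookkeeping in the boundary cases $s\in\{k-3,k-2,k-1\}$: one must enumerate the possible shapes of the $\le 3$-edge bipartite graph $H$, read off all degree pairs, and check that no configuration consists only of degree pairs on which $\mathcal F<\frac2k$ --- it is exactly the combinatorial constraints (simplicity of $G$, $K_2$-freeness, and $|E(H)|\le k-s$) that rule this out. Second, the two explicit $k=6$ network computations in case (i): the margin over $\frac13$ is thin, so one must feed in the sharper estimates of Lemma \ref{lem2.6} (rather than Lemma \ref{lem2.5}) and track which edges inside $G[A]$ and $G[B]$ are present, since this decides whether Lemma \ref{lem2.6} gives the ``adjacent'' or the ``non-adjacent'' bound.
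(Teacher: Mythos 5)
Your reformulation of ``strongly $S_x$-free for all $x\ge 3$'' as ``no vertex of $G[C]$-degree at least $2$ has a neighbour of degree $1$'' is correct (bipartiteness of $G[C]$ makes every closed neighbourhood induce a star), and your main case $s\le k-4$ is exactly the paper's Claim~\ref{claim1}, where $\mathcal G(x)=\mathcal F(x-1,1)$ is shown to stay above $\tfrac2k$ up to the threshold $x=k+\tfrac4k-3$. For the boundary cases $s\in\{k-3,k-2,k-1\}$ you take a genuinely different route: the paper argues that the $\le 3$ leftover edges force a violation of strong $S_3$-, $S_4$- or $S_{x,y}$-freeness and then cites Claim~\ref{claim1} and Theorem~\ref{tm3.13}, whereas you apply Theorem~\ref{tm3.66} directly to an edge of $H$ and check $k(k-p-q+1)^2\ge 4(k-p)(k-q)$ on the finitely many admissible degree pairs. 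Your enumeration of the pairs is complete, and the arithmetic is tight but correct at $k=7$ for $(1,3)$ and at $k=8$ for $(1,4)$ and $(2,3)$; this is arguably cleaner than the paper's case analysis. (Minor point: Claim~\ref{claim4} and Appendix~\ref{appa} only establish the equivalence of $\mathcal F(p,q)\ge\tfrac2k$ with that polynomial inequality for $p,q\ge2$; for the pairs with a coordinate equal to $1$ you should note that the denominator of $\mathcal F$ is still positive, which follows from positivity of the numerator and the fact that $\mathcal F$ is a resistance.)

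The genuine gap is the $k=6$ endgame of part (i). First, the computation you defer to is not only unverified but methodologically shaky: in both surviving configurations at least one side of the cut has three attachment vertices, so its equivalent network is a three-armed star whose individual arm weights are \emph{differences} of pairwise effective resistances (as in Eq.~(\ref{eq3.233})); Lemma~\ref{lem2.6} lower-bounds the pairwise resistances but gives no useful lower bound on the individual arms, so ``bound each edge weight of these networks from below'' is not a legitimate Rayleigh step as stated. The margin is also genuinely in doubt: for the double star $S_{2,2}$, Theorem~\ref{tm3.66} itself only yields $\mathcal F(3,3)=\tfrac{7}{24}<\tfrac13$, and a short-circuiting computation with the crude bounds lands at $\tfrac{26}{85}<\tfrac13$. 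Second, and decisively, no computation is needed: both of your surviving configurations ($K_{2,2}$ plus a pendant edge, and the double star $S_{2,2}$) force $|C|=5$, and an edge cut of a $6$-regular graph has even cardinality since $|C|=\sum_{a\in A}d_G(a)-2|E(G[A])|$ --- this parity observation (the paper's appeal to Lemma~\ref{lem3.3} in its Case~1, $k=6$) disposes of the case in one line. You should replace the proposed network computation by this parity argument; as written, part (i) for $k=6$ is not proved.
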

\begin{proof} We first prove the following claim.
\begin{cla}\label{claim1}
If $k \geq 6$ and $C$ is a non-trivial edge cut with $|C| \leq k$, then for $3 \leq x \leq k+\frac{4}{k}-3$, $G[C]$ is strongly $S_{x}$-free.
\end{cla}
\emph{Proof of Claim \ref{claim1}.} Let $C = \{uv\in E(G)\mid u\in A, v\in B \}$ be a non-trivial edge cut with $|C| \leq k$. Suppose, to the contrary, that there exists an integer $x_1$ with $3 \leq x_1 \leq k+\frac{4}{k}-3$ such that $G[C]$ is not strongly $S_{x_1}$-free. Without loss of generality, let $u_1$ be a vertex in $A$ such that $N_{G[C]}[u_1]$ induces a star $S_{x_1}$, and let $v_1$ be a vertex in $N_{G[C]}(u_1)$ with $d_{G[C]}(v_1) = 1$. For example, the structure of $G[C]$ is shown in Fig. \ref{Fig.4}(a).  By Theorem \ref{tm3.66}, we have
\begin{equation}\label{eq3.18}
\Omega(G) \geq \mathcal{F}(d_{G[C]}(u_1),d_{G[C]}(v_1)) = \mathcal{F}(x_1-1,1)=\frac{3k+x_1-5}{k^2+(x_1-1)k-2}.
\end{equation}
\begin{figure*}[ht]
  \setlength{\abovecaptionskip}{0cm} %
  \centering
 $$\includegraphics[width=1.4in]{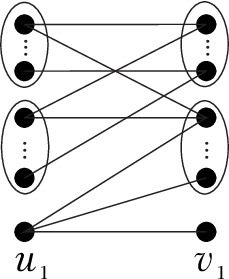}$$\\
 \caption{The structure example of $G[C]$ in the proof of Theorem \ref{cor3.8}.}    \label{Fig.4}
\end{figure*}
For convenience, define a function $\mathcal{G}(x)$ for $x \geq 3$ by
$$\mathcal{G}(x) = \dfrac{3k+x-5}{k^2+(x-1)k-2}.$$
By computing, we derive
$$\mathcal{G}'(x)=-\dfrac{2(k-1)^{2}}{(k^{2}+(x-1)k-2)^{2}}.$$
Consequently, $\mathcal{G}'(x) < 0$ for $x \geq 3$, which means $\mathcal{G}(x)$ is strictly decreasing for $x \geq 3$.\\
Setting $\mathcal{G}(x) = \frac{2}{k}$ leads to
\begin{equation} \label{eq3.9}
x=k+\frac{4}{k}-3.
\end{equation}
Combining Ineq. (\ref{eq3.18}) with Eq. (\ref{eq3.9}),  we conclude that
\begin{equation}\label{eq3.10}
\Omega(G) \geq \dfrac{2}{k}.
\end{equation}
Note that $G$ is an equiarboreal graph, by Lemma \ref{lem2.2}, we have
\begin{equation*}
\Omega(G) =\frac{2(|V(G)|-1)}{k|V(G)|}<\frac{2}{k}.
\end{equation*}
a contradiction to Ineq. (\ref{eq3.10}). Thus it follows, as desired, that for $3 \leq x \leq k+\frac{4}{k}-3$, $G[C]$ is strongly $S_{x}$-free. The Claim \ref{claim1} is holds.

Now we prove the statement (i). Let $C = \{uv\in E(G)\mid u\in A, v\in B \}$ be an edge cut with $|C| \leq k-1$. For the case where $x \geq k+1$, it is clear that $G[C]$ is strongly $S_{x}$-free. For the case where $3 \leq x \leq k-3$, $G[C]$ is strongly $S_{x}$-free by Claim \ref{claim1}. For the case where $ k-2 \leq x \leq k$, by contradiction, there exists an integer $x_1$ with $k-2 \leq x_1 \leq k$ such that $G[C]$ is not strongly $S_{x_1}$-free. Then $G[C]$ contains a star $S_{x_1}$ induced by vertex set $N_{G[C]}[u]$, where, without loss of generality, let $u \in A$. Let $m$ be the number of edges not belonging  to $S_{x_1}$ in $G[C]$. We have $m \leq 2$. If $m=0$, it follows from Lemma \ref{lem3.1} that it is invalid.  If $m=1$, let $u_1v_1$ be the edge not belonging to $S_{x_1}$ in $C$.  According to Theorem \ref{tm3.5}, we know that $G[C]$ is $K_2$ component-free. So vertex $v_1 \in N_{G[C]}(u)$, this implies that $G[C]$ contains a star $S_3$ induced by $N_{G[C]}[v_1]$, and $d_{G[C]}(u_1)=1$. Therefore, $G[C]$ is not strongly $S_{3}$-free, a contradiction to Claim \ref{claim1}. Hence, we only need to consider $m=2$, i.e., $x_1=k-2$.

\textbf{Case 1.} $k =6$. For this case, we have $|C|=|E(S_4)|+2=5$, which contradicts Lemma \ref{lem3.3}.

\textbf{Case 2.} $k \geq 7$. Let $e_1=u_1v_1$ and $e_2=u_2v_2$ be the edges not belonging  to $S_{k-2}$ in $G[C]$. We first consider $u_1\neq u_2$. If $|N_{G[C]}(u) \cap \{v_1,v_2\}|=0$, we have $v_1=v_2$ from Theorem \ref{tm3.5}. Then $G[C]$ contains a star $S_3$ induced by $N_{G[C]}[v_1]$, and $d_{G[C]}(u_1)=1$. This means that $G[C]$ is not strongly $S_{3}$-free, which contradicts Claim \ref{claim1}. If $|N_{G[C]}(u) \cap \{v_1,v_2\}| > 0$, without loss of generality, suppose that $v_1 \in N_{G[C]}(u)$.  Similarly, we can see that $C$ contains a star $S_3$ or $S_4$ induced by $N_{G[C]}[v_1]$, and $d_{G[C]}(u_1)=1$. Hence, $G[C]$ is not strongly $S_{3}$ or $S_{4}$-free, which contradicts Claim \ref{claim1}. We continue to consider $u_1=u_2$. If $|N_{G[C]}(u) \cap \{v_1,v_2\}| < 2$, then $G[C]$ contains a star $S_3$ induced by $N_{G[C]}[u_1]$, and at least one of the vertices $v_1$ and $v_2$ has a degree of 1 in $G[C]$. Hence, $G[C]$ is not strongly $S_{3}$-free, which contradicts Claim \ref{claim1}. If $|N_{G[C]}(u) \cap \{v_1,v_2\}| = 2$, we know that $u_1v_1 \in C$ and $d_{G[C]}(u_1)=d_{G[C]}(v_1)=2$, $G[C]$ is not strongly $S_{1,1}$-free, this contradicts Theorem \ref{tm3.13}. Therefore, the statement (i) holds.

Then we prove the statement (ii). Let $C = \{uv\in E(G)\mid u\in A, v\in B \}$ be a non-trivial edge cut with $|C| \leq k$. Similarly, for the case where $x \geq k+1$, obviously $G[C]$ is strongly $S_{x}$-free. For the case where $ 3 \leq x \leq k-3$,  by Claim \ref{claim1}, $G[C]$ is strongly $S_{x}$-free. For the case where $ k-2 \leq x \leq k$, assume by contradiction that $G[C]$ is not strongly $S_{x_1}$-free for an integer $x_1$ with $k-2 \leq x_1 \leq k$. Then $G[C]$ contains a star $S_{x_1}$ induced by the vertex set $N_{G[C]}[u]$, where we may assume $u \in A$. Let $m$ be the number of edges not belonging to $S_{x_1}$ in $G[C]$, we have $ m \leq 3$. The proof for the case of $ m \leq 2$ is similar to that of statement (i). When $m=3$ (i.e., $x_1=k-2$),  let $e_i=u_iv_i$ ($1 \leq i \leq 3$) denote an edge that does not belong to $S_{k-2}$ in $G[C]$.  We first consider $|\{u_1,u_2,u_3\}| > 1$. Without loss of generality, let $u_1 \neq u_2$ and $u_1 \neq u_3$. Then $d_{G[C]}(u_1)=1.$  By Theorem \ref{tm3.5}, we have $d_{G[C]}(v_1) \geq 2$. It is not hard to verify that $G[C]$ contains a star $S_t$ induced by $N_{G[C]}[v_1]$, where $3 \leq t \leq 5$, and since $d_{G[C]}(u_1)=1,$  $G[C]$ is not strongly $S_{t}$-free, where $3 \leq t \leq 5$. This  contradicts  Claim \ref{claim1}. We then consider $|\{u_1,u_2,u_3\}| = 1$.  If $|N_{G[C]}(u) \cap \{v_1,v_2,v_3\}| < 3$, then $G[C]$ contains a star $S_4$ induced by $N_{G[C]}[u_1]$, and at least one of the vertices $v_1$,  $v_2$ and $v_3$ has a degree of 1 in $G[C]$. Hence, $G[C]$ is not strongly $S_{4}$-free, which contradicts Claim \ref{claim1}.  If $|N_{G[C]}(u) \cap \{v_1,v_2, v_3\}| = 3$, we have $u_1v_1 \in C$, $d_{G[C]}(u_1)=3$ and $d_{G[C]}(v_1)=2$, $G[C]$ is not strongly $S_{2,1}$-free, which contradicts Theorem \ref{tm3.13}. Thus, we proof  the statement (ii).
\end{proof}

\begin{thm}\label{cor3.9}
Let $G$ be a connected $k$-regular equiarboreal graph with $k \geq 6$. Then for each edge cut $C$ of $G$ with $|C| \leq k-1$,  $G[C]$ does not contain  vertex of degree 1.
\end{thm}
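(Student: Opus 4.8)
The plan is to argue by contradiction and to reduce the statement directly to the strongly $S_x$-free property established in Theorem~\ref{cor3.8}(i). So I would suppose that $C$ is an edge cut of $G$ with $|C| \le k-1$ for which $G[C]$ contains a vertex of degree $1$. The first step is to record that $C$ must then be a non-trivial edge cut: if the defining partition $(A,B)$ had, say, $|A| = 1$ with $A = \{w\}$, then $C$ would consist precisely of the $k$ edges of $G$ incident with $w$, forcing $|C| = k$, contrary to $|C| \le k-1$. Hence $|A| \ge 2$ and $|B| \ge 2$, so both Theorem~\ref{tm3.5} and Theorem~\ref{cor3.8} apply to $C$.

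Next I would let $u$ be a vertex of degree $1$ in $G[C]$, say with $u \in A$, and let $v \in B$ be its unique neighbour in $G[C]$. Since $k \ge 6 \ge 4$ and $|C| \le k-1 \le k$, Theorem~\ref{tm3.5} says that $G[C]$ has no connected component isomorphic to $K_2$; in particular $uv$ is not such a component, so $d_{G[C]}(v) \ge 2$. Now $G[C]$ is bipartite with parts $A_1 \subseteq A$ and $B_1 \subseteq B$, and every neighbour of $v$ in $G[C]$ lies in $A_1$; therefore the only edges of $G[C]$ joining two vertices of $N_{G[C]}[v]$ are the edges from $v$ to $N_{G[C]}(v)$, so $N_{G[C]}[v]$ induces in $G[C]$ the star $S_x$ with $x = d_{G[C]}(v) + 1 \ge 3$. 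Moreover $u$ is a leaf of this star and $d_{G[C]}(u) = 1$. Consequently $G[C]$ fails to be strongly $S_x$-free for this integer $x \ge 3$, which contradicts Theorem~\ref{cor3.8}(i), and the proof is complete.

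I do not expect any real obstacle here: once Theorem~\ref{cor3.8}(i) is available, the result is essentially a formality, since that theorem already absorbs the delicate large-$x$ case analysis. The only work is the two bookkeeping points — that an edge cut of size at most $k-1$ in a $k$-regular graph is automatically non-trivial (so that Theorem~\ref{tm3.5} is applicable and gives $d_{G[C]}(v) \ge 2$), and that, by the bipartiteness of $G[C]$, the closed neighbourhood of $v$ induces exactly a star. The one spot that needs a little attention is ensuring that this star has order at least $3$, which is precisely why invoking Theorem~\ref{tm3.5} to exclude a $K_2$ component is needed.
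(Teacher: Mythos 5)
Your proof is correct and follows essentially the same route as the paper: rule out a $K_2$ component via Theorem~\ref{tm3.5} so that $d_{G[C]}(v)\ge 2$, then observe that $N_{G[C]}[v]$ induces a star with the degree-one vertex $u$ as a leaf, contradicting the strongly $S_x$-free property from Theorem~\ref{cor3.8}(i). Your explicit checks that $|C|\le k-1$ forces the cut to be non-trivial in a $k$-regular graph, and that bipartiteness of $G[C]$ makes the induced subgraph on $N_{G[C]}[v]$ exactly a star, are bookkeeping points the paper leaves implicit.
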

\begin{proof}
Let $C$ be an edge cut of $G$ with $|C| \leq k-1$.  By contradiction, suppose that $G[C]$ contains a vertex of degree 1, denoted as $u$, and let $v$ be the unique neighbor of $u$. If $d_{G[C]}(v)=1$, then the edge $uv$ forms a $K_2$ component in $G[C]$, which contradicts Theorem \ref{tm3.5}. If $d_{G[C]}(v) \geq 2$, then $G[C]$ contains a star $S_x$ induced by $N_{G[C]}[v]$, where $x \geq 3$, and $d_{G[C]}(u)=1$. Therefore, $G[C]$ is not strongly $S_{x}$-free, which contradicts the statement (i) of Theorem \ref{cor3.8}. The proof is complete.
\end{proof}
Similarly, the following result can be obtained.
\begin{thm}\label{cor3.10}
Let $G$ be a connected $k$-regular equiarboreal graph with $k \geq 8$. Then for each non-trivial edge cut $C$ of $G$ with $|C| \leq k$,  $G[C]$ does not contain  vertex of degree 1.
\end{thm}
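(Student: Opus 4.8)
The plan is to mirror the argument used for Theorem~\ref{cor3.9}, simply replacing the appeal to part~(i) of Theorem~\ref{cor3.8} by part~(ii), which is precisely the version tailored to non-trivial edge cuts with $|C| \leq k$ under the hypothesis $k \geq 8$. This is exactly what the word ``similarly'' before the statement is meant to signal.

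First I would argue by contradiction: suppose $C$ is a non-trivial edge cut of $G$ with $|C| \leq k$ such that $G[C]$ has a vertex $u$ with $d_{G[C]}(u) = 1$, and let $v$ be its unique neighbour in $G[C]$. Recall that $G[C]$ is bipartite with parts $A_1$ and $B_1$, so all neighbours of $v$ lie in the part opposite to $v$ and are pairwise non-adjacent; hence $N_{G[C]}[v]$ always induces a star centred at $v$.

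Next I would split according to $d_{G[C]}(v)$. If $d_{G[C]}(v) = 1$, then $u$, $v$ and the edge $uv$ form a connected component of $G[C]$ isomorphic to $K_2$; since $k \geq 8 \geq 4$ and $C$ is a non-trivial edge cut with $|C| \leq k$, this contradicts Theorem~\ref{tm3.5}, which says $G[C]$ is $K_2$ component-free. If $d_{G[C]}(v) \geq 2$, put $x = d_{G[C]}(v) + 1 \geq 3$; then $N_{G[C]}[v]$ induces $S_x$ and its leaf $u$ satisfies $d_{G[C]}(u) = 1$, so by definition $G[C]$ is not strongly $S_x$-free. As $k \geq 8$ and $C$ is a non-trivial edge cut with $|C| \leq k$, this contradicts part~(ii) of Theorem~\ref{cor3.8}. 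In both cases we reach a contradiction, which proves the claim.

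I do not anticipate a genuine obstacle: the substantive work has already been absorbed into Theorems~\ref{tm3.5} and~\ref{cor3.8}(ii), and the only point requiring care is checking that their hypotheses are in force here — namely that $C$ is non-trivial, that $|C| \leq k$, and that $k$ is large enough ($k \geq 4$ for the former, $k \geq 8$ for the latter) — all of which hold by assumption. The proof is therefore essentially a verbatim repetition of that of Theorem~\ref{cor3.9}, with the thresholds $k \geq 6$, $|C|\le k-1$ upgraded to $k \geq 8$, $|C|\le k$, $C$ non-trivial.
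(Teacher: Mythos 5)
Your proposal is correct and is exactly the argument the paper intends by ``Similarly, the following result can be obtained'': repeat the proof of Theorem~\ref{cor3.9}, handling the $d_{G[C]}(v)=1$ case via Theorem~\ref{tm3.5} and the $d_{G[C]}(v)\ge 2$ case via part~(ii) of Theorem~\ref{cor3.8} in place of part~(i). The hypothesis checks you list ($C$ non-trivial, $|C|\le k$, $k\ge 8\ge 4$) are precisely the points that need verifying, and they all hold.
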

\subsection{$6$-regular equiarboreal  graphs and $7$-regular equiarboreal graphs}
\begin{thm}\label{tm3.10}
Let $G$ be a connected $6$-regular equiarboreal graph. Then $\lambda(G)=6$.
\end{thm}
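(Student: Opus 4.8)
The plan is to follow the same strategy as the $5$-regular case (Theorem \ref{tm3.6}), but the situation is much simpler because only one bad value of $\lambda(G)$ needs to be excluded. Theorem \ref{tm1.3} gives
$$\lambda(G) \geq \frac{|E(G)|}{|V(G)|-1} = \frac{3|V(G)|}{|V(G)|-1} > 3,$$
and since $k=6$ is even, Lemma \ref{lem3.3} forces $\lambda(G)$ to be even, so $\lambda(G) \in \{4,6\}$; hence it suffices to show $\lambda(G) \neq 4$. Assume for contradiction that $G$ has an edge cut $C$ with $|C|=4$. This cut is non-trivial, since a trivial cut would consist of all $6$ edges incident to one vertex, contradicting $|C|=4$.

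The key step is to pin down the structure of $G[C]$. By Lemma \ref{lem3.1} we have $|A_1| \geq 2$ and $|B_1| \geq 2$, and by Theorem \ref{cor3.9} (applicable since $k=6$ and $|C| = 4 \leq k-1$) the graph $G[C]$ has no vertex of degree $1$. As $G[C]$ is bipartite with $4$ edges, the degree sum on each side equals $4$ while every degree is at least $2$; this forces $|A_1| = |B_1| = 2$ with all four vertices of degree $2$, so $G[C] \cong C_4 \cong K_{2,2}$ with $A_1 = \{u_1,u_2\}$, $B_1 = \{v_1,v_2\}$ and cut edges $u_1v_1, u_1v_2, u_2v_1, u_2v_2$. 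This is exactly the configuration of Fig. \ref{Fig.2}(d), treated in Case 3 of the proof of Theorem \ref{tm3.6}, and I would reuse that computation: writing $G_1 = G[A]$, $G_2 = G[B]$, the principle of substitution replaces $G$ by the weighted graph $M_2$ on $\{u_1,u_2,v_1,v_2\}$ with unit weights on the four cut edges, weight $x_2 = \Omega_{G_1}(u_1,u_2)$ on $u_1u_2$ and weight $y_2 = \Omega_{G_2}(v_1,v_2)$ on $v_1v_2$, and the complete bipartite graph--double star transformation gives
$$\Omega(G) = \Omega_{M_2}(u_1,v_1) = \frac{1+2x_2}{4+4x_2} + \frac{1+2y_2}{4+4y_2} - \frac14,$$
which is increasing in $x_2$ and $y_2$. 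The only place where $k=6$ rather than $k=5$ matters is the lower bound on $x_2,y_2$: now $d_{G_1}(u_1) = d_{G_1}(u_2) = k - d_{G[C]}(u_1) = 4$, so Lemma \ref{lem2.6} yields $x_2 = \Omega_{G_1}(u_1,u_2) \geq \frac15 + \frac15 = \frac25$, and likewise $y_2 \geq \frac25$. By Rayleigh's monotonicity law,
$$\Omega(G) \geq 2\cdot\frac{1+\frac{4}{5}}{4+\frac{8}{5}} - \frac14 = \frac{9}{28} + \frac{9}{28} - \frac14 = \frac{11}{28},$$
whereas Lemma \ref{lem2.2} gives $\Omega(G) = \frac{2(|V(G)|-1)}{6|V(G)|} < \frac13$. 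Since $\frac{11}{28} > \frac13$, this is a contradiction, so $\lambda(G) \neq 4$ and therefore $\lambda(G)=6$.

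The only real obstacle is the structural classification: everything downstream is the already-available computation from Theorem \ref{tm3.6}. The crux is that Theorem \ref{cor3.9} (no degree-$1$ vertices when $|C| \leq k-1$) collapses the ten potential $4$-edge-cut types of Fig. \ref{Fig.2} down to the single cycle $C_4$. One should also verify that a direct appeal to Theorem \ref{tm3.66} does not suffice here, since $\mathcal{F}(2,2) = \frac{55}{170} = \frac{11}{34} < \frac13$; it is precisely the sharper $K_{2,2}$-to-double-star reduction that buys the needed slack.
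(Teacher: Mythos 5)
Your proposal is correct and follows essentially the same route as the paper: rule out $\lambda(G)=4$ via Theorem \ref{tm1.3} and Lemma \ref{lem3.3}, use Theorem \ref{cor3.9} to reduce the $4$-edge cut to the $K_{2,2}$ configuration of Fig. \ref{Fig.2}(d), and then rerun the Case 3 computation of Theorem \ref{tm3.6} with degree $4$ in $G_1$, obtaining $\Omega(G)\geq \frac{11}{28}>\frac{1}{3}$, exactly the bound the paper uses. Your explicit degree-sum argument forcing $G[C]\cong K_{2,2}$ and the remark that $\mathcal{F}(2,2)=\frac{11}{34}<\frac{1}{3}$ (so Theorem \ref{tm3.66} alone would not suffice) are small but welcome additions of detail, not a different method.
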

\begin{proof}
Let $G$ be a connected 6-regular equiarboreal graph. By Theorem \ref{tm1.3}, we have $\lambda(G)  > 3.$ Meanwhile, $\lambda(G)\neq 5$ follows directly from Lemma \ref{lem3.3}. The only task is to consider the case where $\lambda(G)=4$. By Theorem \ref{cor3.9}, we know that for every non-trivial edge cut $C$ with $|C|=4$, $G[C]$ does not contain  vertex of degree 1. Hence, we only consider the edge cut case as shown in Fig. \ref{Fig.2}(d). For this case, similar to the proof of Case 3 in Theorem \ref{tm3.6}, we can show that $\Omega(G) \geq \frac{11}{28}.$ Whereas, according to Lemma \ref{lem2.2}, we have $\Omega(G) < \frac{1}{3}$, which is a contradiction. Consequently, we have $\lambda(G)=6$. The proof is complete.
\end{proof}

\begin{thm}\label{tm3.14}
Let $G$ be a connected $7$-regular equiarboreal graph. Then $\lambda(G)=7$.
\end{thm}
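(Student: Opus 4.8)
The plan is to prove $\lambda(G)=7$ by excluding $\lambda(G)\in\{4,5,6\}$. Theorem~\ref{tm1.3} gives $\lambda(G)\geq\frac{|E(G)|}{|V(G)|-1}=\frac{7|V(G)|}{2(|V(G)|-1)}>\frac72$, so $\lambda(G)\geq4$, while trivially $\lambda(G)\leq\delta(G)=7$; hence it suffices to rule out the three values $4,5,6$. For each of these I would fix a minimum edge cut $C$ (necessarily \emph{non-trivial}, since a trivial cut of $G$ has exactly $7$ edges) and analyse the bipartite graph $G[C]$, with parts $A_1,B_1$, using throughout that $\Omega(G)=\frac{2(|V(G)|-1)}{7|V(G)|}<\frac27$ by Lemma~\ref{lem2.2}, and that $\Omega_G(u,v)=\Omega(G)$ for every $uv\in C$.

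\textbf{The cases $\lambda(G)=5$ and $\lambda(G)=4$.} For $|C|=5$, Theorem~\ref{cor3.9} (applicable since $k=7\geq6$ and $|C|\leq k-1$) forces $\delta(G[C])\geq2$; but the degree sum on each side of the bipartition of $G[C]$ equals $5$, so $|A_1|,|B_1|\leq2$ and $|C|\leq|A_1||B_1|\leq4$, a contradiction. For $|C|=4$ the same degree constraint, together with Lemma~\ref{lem3.1}, leaves only $G[C]=K_{2,2}$; I would then repeat Case~3 of the proof of Theorem~\ref{tm3.6} verbatim, except that now $d_{G_1}(u_i)=d_{G_2}(v_j)=7-2=5$, so Lemma~\ref{lem2.6} gives $\Omega_{G_1}(u_1,u_2),\Omega_{G_2}(v_1,v_2)\geq\frac16+\frac16=\frac13$. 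The complete bipartite graph--double star transformation then yields $\Omega(G)\geq 2\cdot\frac{1+\frac23}{4+\frac43}-\frac14=\frac38>\frac27$, a contradiction.

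\textbf{The case $\lambda(G)=6$.} Here $|C|=6\leq k-1$, so Theorem~\ref{cor3.9} gives $\delta(G[C])\geq2$, and Theorem~\ref{tm3.13} applies, because $k=7\geq7$, $|C|\leq k$, and $k-\sqrt{k}-2=5-\sqrt{7}>2$, so that $(x,y)=(1,1)$ is admissible; it yields that $G[C]$ is strongly $S_{1,1}$-free, i.e.\ no edge of $C$ joins two degree-$2$ vertices. Combined with Lemma~\ref{lem3.1}, each of $|A_1|,|B_1|$ is $2$ or $3$; the $(3,3)$ case makes $G[C]$ $2$-regular, hence not strongly $S_{1,1}$-free, and the $(2,3)$ case forces $G[C]=K_{2,3}$. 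So $G[C]=K_{2,3}$: write $\{p_1,p_2\}$ for the degree-$3$ side and $\{q_1,q_2,q_3\}$ for the degree-$2$ side, and assume after relabelling that $q_i\in A$ and $p_j\in B$, so that $d_{G_1}(q_i)=5$ and $d_{G_2}(p_j)=4$ for $G_1=G[A]$ and $G_2=G[B]$. By the principle of substitution I would replace $G_1$ by an electrically equivalent weighted triangle on $\{q_1,q_2,q_3\}$ (each of whose edge resistances is $\geq\Omega_{G_1}(q_i,q_j)$) and $G_2$ by a single edge $p_1p_2$ of resistance $\Omega_{G_2}(p_1,p_2)$; Lemma~\ref{lem2.6} then bounds each triangle resistance below by $\frac16+\frac16=\frac13$ and the edge $p_1p_2$ below by $\frac15+\frac15=\frac25$. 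By Rayleigh's monotonicity law, $\Omega(G)=\Omega_G(q_1,p_1)\geq\Omega_N(q_1,p_1)$, where $N$ is the $5$-vertex network made of the six unit $K_{2,3}$-edges $q_ip_j$, a triangle of resistance $\frac13$ on $\{q_1,q_2,q_3\}$, and an edge of resistance $\frac25$ joining $p_1,p_2$. A short Kirchhoff computation — identifying $q_2$ with $q_3$, which are at equal potential when a unit voltage is applied across $q_1$ and $p_1$ — gives $\Omega_N(q_1,p_1)=\frac{51}{176}$; since $51\cdot7=357>352=2\cdot176$, this means $\Omega(G)\geq\frac{51}{176}>\frac27$, again contradicting Lemma~\ref{lem2.2}. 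Hence $\lambda(G)\neq6$, and therefore $\lambda(G)=7$.

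\textbf{Main obstacle.} The $\lambda(G)=6$ case is the delicate one. Feeding the $K_{2,3}$-cut straight into Theorem~\ref{tm3.66} only gives $\Omega(G)\geq\mathcal{F}(2,3)=\frac{71}{257}$, which is useless since $\frac{71}{257}<\frac27$: that theorem discards the triangle on the degree-$2$ side and uses the crude bound $\frac1{k-x}$ for the inner resistances. The gain comes from keeping all three triangle edges and sharpening the inner bounds to $\frac13$ and $\frac25$ via Lemma~\ref{lem2.6} (exploiting $d_{G_1}=5$ and $d_{G_2}=4$); verifying that the resulting $5$-vertex network has effective resistance strictly above the threshold $\frac27$ is the computational heart of the argument and the step I expect to require the most care.
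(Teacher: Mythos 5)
Your proof is correct, and it reaches the same bottleneck as the paper — showing that a $6$-edge cut with $G[C]=K_{2,3}$ forces $\Omega(G)>\tfrac27$ — but resolves that bottleneck by a genuinely different electrical reduction. The paper keeps the three degree-$2$ terminals attached to a star (Y) network with leg resistances $q_1,q_2,q_3$ determined by the pairwise resistances of $G_2$; since the pairwise bound $\Omega_{G_2}(v_i,v_j)\geq\tfrac13$ only controls the sums $q_i+q_j$, the paper must first prove a separate claim ($q_1=q_2=q_3$, derived from equiarboreality via the double-star transformation) before it can push each leg down to $\tfrac16$ and compute $\Omega=\tfrac5{16}$. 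Your delta representation sidesteps that symmetrization entirely: each triangle edge resistance dominates the corresponding pairwise effective resistance ($R_{ij}=R_i+R_j+R_iR_j/R_k\geq \Omega_{G_1}(q_i,q_j)\geq\tfrac13$), so Rayleigh applies edge by edge, at the cost of a slightly weaker but still sufficient bound $\tfrac{51}{176}>\tfrac27$ (I checked the Kirchhoff computation; it is right, though the margin $357>352$ is thin, so the exact arithmetic matters). On the structural side you are doing a little more work than necessary: the paper's double-star counting (degrees at the ends of any cut edge sum to at least $5$ by Theorem~\ref{tm3.13}, and no vertex of $G[C]$ has degree $1$ by Theorem~\ref{cor3.9}) rules out $|C|=4$ and $|C|=5$ outright and forces $G[C]=K_{2,3}$ in one stroke; in particular your $K_{2,2}$ subcase never occurs, since in $K_{2,2}$ every edge joins two degree-$2$ vertices, violating strong $S_{1,1}$-freeness — so your extra $\tfrac38$ computation, while valid, is redundant. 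Both routes are sound; yours trades the paper's structural economy and its equiarboreality claim for a more self-contained network estimate.
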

\begin{proof}
By contradiction, suppose that there exists an edge cut $C$ of $G$ with $|C| \leq 6$. Choose an edge $(u_1,v_1) \in C$. By Theorem \ref{cor3.9}, we have $d_{G[C]}(u_1) \geq 2$ and $d_{G[C]}(v_1) \geq 2$. Then $G[C]$ contains a double star $S_{x_1,y_1}$ centered at $u_1$ and $v_1$ as its subgraph (see Fig. \ref{Fig.8}), where $x_1= d_{G[C]}(u_1)-1$ and $y_1= d_{G[C]}(v_1)-1$.
\begin{figure*}[ht]
  \setlength{\abovecaptionskip}{0cm} %
  \centering
 $$\includegraphics[width=3.2in]{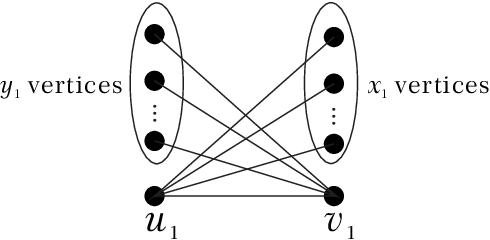}$$\\
 \caption{The double star $S_{x_1,y_1}$ in $G[C]$.}    \label{Fig.8}
\end{figure*}

\noindent On the other hand, according to Theorem \ref{tm3.13},  for all integers $x,y \geq 1$ satisfying $ x+y \leq 7- \sqrt{7}-2$, $G[C]$ is strongly $S_{x,y}$-free. Hence, we have
\begin{align}d_{G[C]}(u_1)+d_{G[C]}(v_1) &=x_1+1+y_1+1 \notag \\
& \geq \lfloor 7-\sqrt{7}-2 \rfloor +1+2 \notag \\
& =5. \notag
\end{align}
Consequently, we have $x_1+y_1 \geq 3$, then we show that $x_1+y_1=3$.  If $x_1+y_1 \geq 4$, we have
$$ \mbox{max} \{|N_{G[C]}(u_1)\setminus \{v_1\}|, |N_{G[C]}(v_1)\setminus \{u_1\}| \} \geq 2.$$
However, there is at most one edge in $G[C]$ that does not belong to $S_{x_1,y_1}$. This implies at least one vertex in $N_{G[C]}(u_1)$ or $N_{G[C]}(v_1)$ has  degree 1 in $G[C]$, a contradiction to Theorem \ref{cor3.9}. Without loss of generality, let $x_1=2$ and $y_1=1$. Observe that $G[C]$ does not contain vertex of degree 1, and there are at most two edges in $G[C]$ that do not belong to $S_{2,1}$.  It follows that $|C|=6$ and $G[C]$ is a complete bipartite graph $K_{2,3}$ with partite sets $\{u_1,u_2\}$ and $\{v_1,v_2,v_3\}$. \\
\begin{figure*}[ht]
  \setlength{\abovecaptionskip}{0cm} %
  \centering
 $$\includegraphics[width=4.8in]{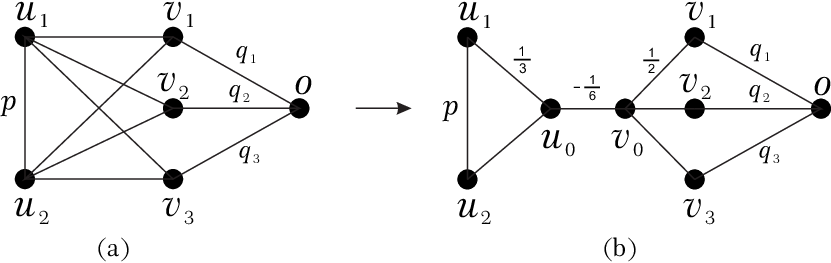}$$\\
 \caption{(a) The network $N$ equivalent to $G$, (b) The network $N_1$ equivalent to $N$.}    \label{Fig.7}
\end{figure*}
Recall that $G_1 = G[A]$ and $G_2 = G[B]$. Now we construct a network $N$ as shown in Fig. \ref{Fig.7}(a), with the weights satisfying:\\
(i) $p= R_{N}(u_1,u_2) =\Omega_{G_1}(u_1,u_2)$;\\
(ii) $R_{N}(u_i,v_j)=1$ for $1 \leq i  \leq 2$ and $1 \leq j \leq 3$;\\
(iii) $q_i = R_{N}(v_i, o)$ for $1 \le i \le 3$, and $q_i + q_j = \Omega_{G_2}(v_i, v_j)$ for $1 \le i \ne j \le 3$.\\
Through simple calculations, we have
\begin{equation}\label{eq3.233}
\left\{
\begin{aligned}
 q_1&=\dfrac{\Omega_{G_2}(v_1,v_2)+\Omega_{G_2}(v_1,v_3)-\Omega_{G_2}(v_2,v_3)}{2},  \\
 q_2&=\dfrac{\Omega_{G_2}(v_1,v_2)+\Omega_{G_2}(v_2,v_3)-\Omega_{G_2}(v_1,v_3)}{2},\\
 q_3&=\dfrac{\Omega_{G_2}(v_1,v_3)+\Omega_{G_2}(v_2,v_3)-\Omega_{G_2}(v_1,v_2)}{2}.
\end{aligned}
\right.
\end{equation}
From the triangle inequality property of resistance distances, then each $q_i$ ($1\leq i \leq 3$) has a unique non-negative solution. By definition, we know that $G_2$ and the star network induced by the vertex set $\{o,v_1,v_2,v_3\}$ in $N$ are $\{v_1,v_2,v_3\}$-equivalent networks.  Using the principle of substitution, we can see that $G$ and $N$ are $\{u_i,v_j | 1\leq i \leq 2, 1\leq j \leq 3\}$-equivalent networks. Then, applying the complete bipartite graph-double star transformation, we obtain the equivalent network $N_1$ as shown in Fig. \ref{Fig.7}(b), with the new edges satisfying:
$$R_{N_1}(u_0,u_i)=\frac{1}{3}, \quad R_{N_1}(v_0,v_j)=\frac{1}{2}, \quad R_{N_1}(u_0,v_0)=-\frac{1}{6}.$$
where $1\leq i \leq 2$,  $1\leq j\leq 3.$\\
We prove the following claim.
\begin{cla}\label{claim2} $q_1=q_2=q_3$.
\end{cla}
\textit{Proof of Claim \ref{claim2}.} For $1 \leq i \leq 3$, let $t_i=\frac{1}{2}+q_i$. It is sufficient to prove that $t_1=t_2=t_3$. Here we only prove that $t_1=t_2$, the case for $t_1=t_3$ can be proved in the same way.  Since $G$ is an equiarboreal graph, we have $\Omega_{N_1}(u_1,v_1)=\Omega_{N_1}(u_2,v_2)$. By the series and parallel connections, this is equivalent to
$$q_1+\dfrac{t_2t_3}{t_2+t_3}=q_2+\dfrac{t_1t_3}{t_1+t_3},$$
which yields that
\begin{align}
q_1&+\frac{t_2t_3}{t_2+t_3}-\left(q_2+\frac{t_1t_3}{t_1+t_3}\right)  \notag\\
&=  t_1+\frac{t_2t_3}{t_2+t_3}-\left(t_2+\frac{t_1t_3}{t_1+t_3}\right)     \notag\\
&= (t_1-t_2)+t_3\left(\frac{t_2}{t_2+t_3}-\frac{t_1}{t_1+t_3}\right)\notag\\
 &= (t_1-t_2)+t_3\left(\frac{t_3(t_2-t_1)}{(t_2+t_3)(t_1+t_3)}\right)        \notag\\
&=(t_1-t_2)\left(1-\frac{t_3^2}{(t_2+t_3)(t_1+t_3)}\right)\notag\\
&=(t_1-t_2)\left(\frac{t_1t_2+t_2t_3+t_3t_1}{(t_2+t_3)(t_1+t_3)}\right) \notag \\
&=0.\notag
\end{align}
Accordingly, we get $t_1=t_2$. The Claim \ref{claim2} is proved.\\
Since the degree of $v_i$ ($1 \leq i \leq 3$) in $G_2$ is $5$,  by Lemma \ref{lem2.6}, we get
$$ \Omega_{G_2}(v_i,v_j) \geq \dfrac{1}{d_{G_2}(v_i)+1} + \dfrac{1}{d_{G_2}(v_j)+1} =\frac{1}{3},   $$
where $1 \leq i \neq j \leq 3$.  Together with Eq. (\ref{eq3.233}) and Claim \ref{claim2}, we have
$$q_1=q_2=q_3=\frac{\Omega_{G_2}(v_1,v_2)+\Omega_{G_2}(v_1,v_3)+\Omega_{G_2}(v_2,v_3)}{6} \geq \frac{1}{6}.$$
Similarly, we have $\Omega_{G_1}(u_1,u_2) \geq \frac{2}{5}$.
We now construct a corresponding  network $N_2$ obtained from $N_1$ by changing the weight $p$ to $\frac{2}{5}$, and changing the weights $q_i$ ($1 \leq i \leq 3$) to $\frac{1}{6}$, respectively, and the weights of other edges remain unchanged.  Since both $u_0$ and $v_0$ are cut-vertices of $N_2$, it follows from the cut-vertex property of resistance distances that
\begin{align}\label{eq3.22}
\Omega_{N_2}(u_1,v_1) &=\Omega_{N_2}(u_1,u_0)+\Omega_{N_2}(u_0,v_0)+\Omega_{N_2}(v_0,v_1) \notag \\
&=\frac{11}{48}-\frac{1}{6}+\frac{1}{4}\notag \\
&=\frac{5}{16}.
\end{align}
Based on the principle of substitution and Rayleigh's monotonicity law, we have
\begin{equation}\label{eq3.24}
\Omega(G)=\Omega_G(u_1,v_1)=\Omega_{N_1}(u_1,v_1) \geq \Omega_{N_2}(u_1,v_1) = \frac{5}{16}.
\end{equation}
Furthermore, by Lemma \ref{lem2.2}, we have
\begin{equation*}
\Omega(G) =\frac{|V(G)|-1}{|E(G)|}=\frac{2(|V(G)|-1)}{7|V(G)|}<\frac{2}{7}.
\end{equation*}
which contradicts Ineq. (\ref{eq3.24}). We complete the proof.
\end{proof}

\subsection{$k$-regular equiarboreal graphs with $k \geq 8$}
\begin{thm}\label{tm3.15}
Let $G$ be a connected $k$-regular equiarboreal graph with $k \geq 8$. Then $\lambda(G)=k$. In particular, if $k \geq 11$, then each edge cut $C$ with $|C|=k$ is trivial.
\end{thm}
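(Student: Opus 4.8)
The plan is to argue by contradiction and reduce everything to elementary degree counting inside the bipartite graph $G[C]$, so that --- unlike the cases $k=6,7$ --- no network reductions are needed. Since $\lambda(G)\le\delta(G)=k$ always, it suffices to rule out $\lambda(G)\le k-1$. So suppose $\lambda(G)=\ell\le k-1$ and fix a minimum edge cut $C$ with $|C|=\ell$. A trivial edge cut of a $k$-regular graph has exactly $k$ edges, so $C$ is non-trivial; hence $|A_1|\ge 2$, $|B_1|\ge 2$ by Lemma~\ref{lem3.1}, and, as $k\ge 8\ge 6$ and $|C|\le k-1$, Theorem~\ref{cor3.9} gives $\delta(G[C])\ge 2$. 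I would then record three consequences. \emph{(i)} If $u\in A_1$ has maximum degree $D$ in $G[C]$, its $D$ neighbours all have degree $\ge 2$, so $2D\le\sum_{w\in B_1}d_{G[C]}(w)=|E(G[C])|=\ell\le k-1$; thus $\Delta(G[C])\le(k-1)/2$. \emph{(ii)} For every edge $uv\in C$, writing $x=d_{G[C]}(u)-1\ge 1$ and $y=d_{G[C]}(v)-1\ge 1$, the inequality $x+y\le k-\sqrt{k}-2$ would, by Theorem~\ref{tm3.13}, make $G[C]$ strongly $S_{x,y}$-free and so exclude the edge $uv$; hence $d_{G[C]}(u)+d_{G[C]}(v)>k-\sqrt{k}$. \emph{(iii)} Combining (i) and (ii), every vertex of $G[C]$ has degree exceeding $k-\sqrt{k}-(k-1)/2=(\sqrt{k}-1)^2/2$; since the $A_1$-side degrees sum to $\ell\le k-1$, this forces $|A_1|<2(\sqrt{k}+1)/(\sqrt{k}-1)$, and symmetrically for $|B_1|$.

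The contradiction is then purely numerical. For $k\ge 9$ one has $2(\sqrt{k}+1)/(\sqrt{k}-1)\le 4$, so $|A_1|,|B_1|\le 3$, and every edge of $G[C]$ has endpoint-degree-sum at most $|A_1|+|B_1|\le 6$; but $k-\sqrt{k}\ge 6$ for $k\ge 9$, contradicting (ii). The case $k=8$ is handled on its own: (i) gives $\Delta(G[C])\le 3$, while (ii) forces every endpoint-degree-sum to be $\ge 6$, so every edge joins two degree-$3$ vertices and $G[C]$ is $3$-regular; but a $3$-regular bipartite graph has $|A_1|\ge 3$, whence $\ell=|E(G[C])|=3|A_1|\ge 9>k-1$, a contradiction. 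This proves $\lambda(G)=k$. For the final clause, assume $k\ge 11$ and let $C$ be a non-trivial edge cut with $|C|=k$; Theorem~\ref{cor3.10} again gives $\delta(G[C])\ge 2$, and facts (i)--(ii) now read $|A_1|,|B_1|\le k/2$ and endpoint-degree-sum $>k-\sqrt{k}$. A degree-$2$ vertex would then have a neighbour of degree $>k-\sqrt{k}-2>k/2\ge|A_1|$ --- the inequality $k-\sqrt{k}-2>k/2$ being exactly what $k\ge 11$ provides --- which is impossible; hence $\delta(G[C])\ge 3$, so $|A_1|,|B_1|\le k/3$ and every endpoint-degree-sum is at most $2k/3\le k-\sqrt{k}$, contradicting (ii) once again.

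The only point that needs genuine care is the small-$k$ bookkeeping: verifying that $k=8$ really is the last value for which counting alone does not finish instantly --- the complete bipartite configurations $K_{2,2}$ and $K_{2,3}$ that survived at $k=6,7$ now violate (i) or (ii) --- and that $k\ge 11$ is precisely the threshold validating the degree-$2$ exclusion when $|C|=k$. Apart from this arithmetic I anticipate no new obstacle: Theorems~\ref{cor3.9}, \ref{cor3.10} and \ref{tm3.13} already package all the electrical-network input this proof requires.
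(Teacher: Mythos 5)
Your proof is correct, and it rests on exactly the same ingredients as the paper's argument: Lemma~\ref{lem3.1}, Theorems~\ref{cor3.9} and \ref{cor3.10} (no degree-$1$ vertices in $G[C]$), and Theorem~\ref{tm3.13} (adjacent endpoint-degree-sums exceed $k-\sqrt{k}$), combined with elementary degree counting in the bipartite graph $G[C]$. The paper merely packages the counting more compactly --- picking a minimum-degree neighbour $w$ of $v_1$ and writing $|C|\ge d_{G[C]}(w)\cdot d_{G[C]}(v_1)\ge 2\lfloor k-\sqrt{k}\rfloor-2$, which is $\ge k$ for $k\ge 8$ and $\ge k+1$ for $k\ge 11$, so no separate treatment of $k=8$ or of the $|C|=k$ case is required --- but your bounds on $\Delta(G[C])$, $|A_1|$ and $|B_1|$ all check out and reach the same conclusion.
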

\begin{proof}
 We have the following claim.
\begin{cla}\label{claim3}
Let $C$ be a non-trivial edge cut of $G$ with $|C| \leq k$. Then $|C| \geq 2 \lfloor k-\sqrt{k}\rfloor -2.$
\end{cla}
\emph{Proof of Claim \ref{claim3}.}
Choose an edge $(u_1,v_1) \in C$,  according to Theorem \ref{cor3.10},  we know that $d_{G[C]}(u_1) \geq 2$ and $d_{G[C]}(v_1) \geq 2$. Hence, $G[C]$ contains a double star $S_{x_1,y_1}$ centered at $u_1$ and $v_1$ as its subgraph (see Fig. \ref{Fig.8}), where $x_1 = d_{G[C]}(u_1)-1$ and $y_1 = d_{G[C]}(v_1)-1$.

Let $w$ be a vertex with the smallest degree in $N_{G[C]}(v_1)$, i.e.,
$$  w \in \left\{ u \in N_{G[C]}(v_1) \mid d_{G[C]}(u) = \min_{v \in N_{G[C]}(v_1)} d_{G[C]}(v) \right\}.   $$
Again using  Theorem  \ref{cor3.10}, we have $d_{G[C]}(w) \geq 2$.  Then by Theorem \ref{tm3.13}, for integers $x,y \geq 1$ satisfying $ x+y \leq k- \sqrt{k}-2$ , $G[C]$ is strongly $S_{x,y}$-free. Hence, we know that
\begin{equation}\label{eq3.23}
d_{G[C]}(w)+d_{G[C]}(v_1) \geq \lfloor k-\sqrt{k}-2 \rfloor +1+2=  \lfloor k-\sqrt{k} \rfloor +1.
\end{equation}
Since $d_{G[C]}(w) \geq 2$ and $d_{G[C]}(v_1) \geq 2$, together with Eq. (\ref{eq3.23}), we estimate
\begin{align}\label{Ineq3.25}
|C| =|E(G[C])| &\geq \sum_{u \in N_{G[C]}(v_1)}d_{G[C]}(u) \notag \\
&\geq \sum_{u \in N_{G[C]}(v_1)}d_{G[C]}(w) \notag \\
&=d_{G[C]}(w)\cdot d_{G[C]}(v_1) \notag \\
& \geq 2\cdot (\lfloor k-\sqrt{k} \rfloor -1) \notag \\
& = 2 \lfloor k-\sqrt{k}\rfloor -2.\notag
\end{align}
The Claim \ref{claim3} is holds.\\
Then we prove that $\lambda(G)=k$ for $k \geq 8$.  By contradiction, suppose there exists an edge cut $C$ with $|C| \leq k-1$. For $k \geq 8$, according to Claim \ref{claim3}, it is not hard to verify that
$$ |C| \geq 2 \lfloor k-\sqrt{k}\rfloor -2 \geq k,$$
achieving a contradiction with the assumption that $|C| \leq k-1$. Hence, $G$ does not contain edge cut $C$ with $|C| \leq k-1$, which means that $\lambda(G)=k$.

Finally, we show that if $k \geq 11$, then each edge cut $C$ with $|C|=k$ is a trivial edge cut. Again by contradiction, assume that there exists a non-trivial edge cut $C$ with $|C|=k$. Similarly, by Claim \ref{claim3}, we have
$$ |C| \geq 2 \lfloor k-\sqrt{k}\rfloor -2 \geq k+1,$$
achieving a contradiction with the assumption that $|C|=k$. Therefore, such edge cut $C$ does not exist. We conclude that $C$ is a trivial edge cut if $|C|=k$. The proof is complete.
\end{proof}

Combining Theorems \ref{tm3.1}, \ref{tm3.55}, \ref{tm3.6}, \ref{tm3.10}-\ref{tm3.15}, we give the main result.
\begin{thm}\label{tm3.17}
Let $G$ be a connected $k$-regular equiarboreal graph with $k \geq 1$. Then $\lambda(G)=k$. In particular, if $k \geq 11$, then each edge cut $C$ with $|C|=k$ is trivial.
\end{thm}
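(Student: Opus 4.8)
The plan is to prove Theorem~\ref{tm3.17} by a case analysis on the degree $k$, invoking for each range of $k$ the corresponding result already established in Section~\ref{section3}; the final theorem is purely an assembly of those pieces.

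First I would dispose of the two degenerate cases. If $k=1$, then a connected $1$-regular graph is $K_2$, so $\lambda(G)=1=k$. If $k=2$, then a connected $2$-regular graph is a cycle, which is edge-transitive (hence automatically equiarboreal) and has edge-connectivity $2=k$. So nothing needs to be checked for $k\le 2$.

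Next, for the non-trivial small degrees I would simply cite the theorems proved above, one value of $k$ at a time: $k=3$ is Theorem~\ref{tm3.1}, $k=4$ is Theorem~\ref{tm3.55}, $k=5$ is Theorem~\ref{tm3.6}, $k=6$ is Theorem~\ref{tm3.10}, and $k=7$ is Theorem~\ref{tm3.14}. Finally, for every $k\ge 8$, the equality $\lambda(G)=k$ together with the sharper claim that every edge cut of size exactly $k$ is trivial whenever $k\ge 11$ is precisely Theorem~\ref{tm3.15}. Concatenating these cases gives the full statement.

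Since the final theorem is just this assembly step, it carries no real difficulty of its own; the genuine work lies in the component results. The hardest parts are the small degrees $k\in\{5,6,7\}$, where one must eliminate each admissible shape of a minimum edge cut $G[C]$ (the lists in Figs.~\ref{Fig.1} and~\ref{Fig.2}) by contracting $G$ to a bounded-size weighted network via the principle of substitution and the complete bipartite graph-double star transformation, and then contradicting the exact value $\Omega(G)=\frac{|V(G)|-1}{|E(G)|}$ forced by Lemma~\ref{lem2.2}; and the uniform case $k\ge 8$, which instead argues structurally, using Theorems~\ref{tm3.13}, \ref{cor3.8} and~\ref{cor3.10} to force any small non-trivial edge cut to contain a dense bipartite substructure with too many edges, contradicting $|C|\le k$.
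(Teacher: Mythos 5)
Your proposal is correct and matches the paper exactly: the paper also dismisses $k\le 2$ as trivial and obtains Theorem~\ref{tm3.17} by concatenating Theorems~\ref{tm3.1}, \ref{tm3.55}, \ref{tm3.6}, \ref{tm3.10}, \ref{tm3.14} and~\ref{tm3.15}, with the trivial-cut refinement for $k\ge 11$ coming from Theorem~\ref{tm3.15}. Your assessment of where the real work lies is also accurate.
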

As a by-product of  Theorems \ref{thm1.2} and \ref{tm3.17}, we confirm the Conjecture \ref{coj1}.
\begin{thm}
Let $G$ be a connected graph which is a colour class in an association scheme. Then its edge-connectivity $\lambda(G)$ equals  its degree.
\end{thm}

It is known that a $k$-regular graph with edge-connectivity at least $k-1$ on an even number of vertices contains a perfect matching \cite{cbe}. By Theorem \ref{tm3.17}, we obtain the following result on the existence of perfect matchings in regular equiarboreal graphs, which generalizes the corresponding results for distance-regular graphs \cite{aeb1}.

\begin{cor}
Let $G$ be a connected $k$-regular equiarboreal graph on an even number of vertices. Then $G$ contains a perfect matching.
\end{cor}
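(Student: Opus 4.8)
The plan is to deduce this corollary immediately from Theorem~\ref{tm3.17}. That theorem tells us $\lambda(G)=k$ for every connected $k$-regular equiarboreal graph with $k\ge 1$, so in particular $\lambda(G)\ge k-1$; it then follows from the known fact \cite{cbe} that a $k$-regular graph with edge-connectivity at least $k-1$ and an even number of vertices has a perfect matching. Thus the entire content of the corollary is already packaged in Theorem~\ref{tm3.17}, and nothing further is required.

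For completeness I would also give the short self-contained derivation via Tutte's theorem, which uses only the equality $\lambda(G)=k$. Suppose $G$ has no perfect matching. By Tutte's theorem there is a set $S\subseteq V(G)$ with $o(G-S)>|S|$, where $o(\cdot)$ counts odd components; since $|V(G)|$ is even, $o(G-S)$ and $|S|$ have the same parity, hence $o(G-S)\ge |S|+2$, so $G-S$ has at least two components and $S\neq\emptyset$. For each odd component $C$ of $G-S$, every edge of $G$ with exactly one end in $V(C)$ has its other end in $S$, so the edges between $V(C)$ and $S$ form the edge cut of $G$ associated with the partition $(V(C),V(G)\setminus V(C))$; as $V(G)\setminus V(C)\neq\emptyset$, this cut has at least $\lambda(G)=k$ edges. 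Summing over the at least $|S|+2$ odd components, the number of edges joining $S$ to the odd components of $G-S$ is at least $k(|S|+2)$; on the other hand, every such edge is incident with a vertex of $S$ and $G$ is $k$-regular, so this number is at most $k|S|$. Hence $k(|S|+2)\le k|S|$, i.e.\ $2k\le 0$, contradicting $k\ge 1$.

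There is essentially no obstacle here beyond Theorem~\ref{tm3.17} itself; the only point to be careful about is the parity step $o(G-S)\ge|S|+2$, which is exactly where the even-order hypothesis enters and which guarantees that each odd component is genuinely separated from the rest of $G$ by an edge cut (so that $\lambda(G)$ applies). The hypothesis cannot be dropped: an odd cycle $C_{2\ell+1}$ is a connected $2$-regular equiarboreal graph (indeed a colour class in the cyclic association scheme) with no perfect matching, consistent with its odd order.
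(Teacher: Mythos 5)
Your first paragraph is exactly the paper's proof: Theorem~\ref{tm3.17} gives $\lambda(G)=k\ge k-1$, and the cited fact from \cite{cbe} about $k$-regular graphs of edge-connectivity at least $k-1$ on an even number of vertices then yields the perfect matching. Your supplementary self-contained derivation via Tutte's theorem is also correct (including the parity step and the observation that each odd component of $G-S$ determines an edge cut of size at least $\lambda(G)=k$), but the paper does not include it and simply stops at the citation.
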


\section{Conclusions}\label{section4}
In this paper, we study the edge-connectivity of regular equiarboreal graphs and further prove that the edge-connectivity of a connected regular equiarboreal graph equals its degree, which confirms the Godsil's conjecture. It is worth mentioning that Brouwer \cite{aeb0} made the stronger conjecture that for any connected graph which is a colour class in an association scheme, its vertex-connectivity equals its degree. Brouwer's conjecture has been proved by Brouwer and Koolen \cite{aeb2} for distance-regular graphs, but remains an open problem in the other cases.

\section{Declaration of competing interest}
The authors declare that they have no known competing financial interests or personal relationships that could have appeared to influence the work reported in this paper.
\section{Data availability}
No data was used for the research described in the article.
\section{Acknowledgments}
The second author is supported by the National Natural Science Foundation of China (through grant No. 12171414) and Taishan Scholars Special Project of Shandong Province (through grant No. tsqn202211113). The third author is support by the National Natural Science Foundation of China (through grant No. 12071194).

\begin{appendices}
\section{The elaborated simplification of Eq. (\ref{eq3.88}) in the proof of Theorem \ref{tm3.66}.}\label{appb}
\begin{proof}
According to Eq. (\ref{eq3.15}), we have
\begin{align}
a V_{u_{2}} + (x-1) V_{v_{2}}&=a \cdot \frac{2ab + c(x-1)}{4ab - c^{2}}+(x-1) \cdot \frac{a(2x + c - 2)}{4ab - c^{2}}\notag\\
& = \frac{ a(2ab + c(x-1)) + a(x-1)(2x + c - 2) }{4ab - c^{2}} \notag\\
&= \frac{ 2a \left[ ab + c(x-1) + (x-1)^2 \right] }{4ab - c^{2}}. \notag
\end{align}
Since
$$I = k - \left( a V_{u_{2}} + (x-1) V_{v_{2}} \right),$$
which yields that
\begin{equation}\label{eqA.1}
I = \frac{ k(4ab - c^{2}) - 2a \left[ ab + c(x-1) + (x-1)^2 \right] }{4ab - c^{2}}.
\end{equation}
By expanding the numerator of  Eq. (\ref{eqA.1}), we have
$$I = \frac{ 4k a b - k c^{2} - 2a^2 b - 2a c (x-1) - 2a (x-1)^2 }{4ab - c^{2}}.$$
In order to obtain Eq. (\ref{eq3.99}), it suffices to prove that
$$4k a b - k c^{2} - 2a^2 b - 2a c (x-1) - 2a (x-1)^2 = 2k a b + 2 a b - k c^{2}.$$
In fact, we can show
\begin{align}
4k a b - k c^{2} - 2a^2 b& - 2a c (x-1) - 2a (x-1)^2-(2k a b + 2 a b - k c^{2})\notag\\
&=2k a b - 2a^2 b - 2 a b - 2a c (x-1) - 2a (x-1)^2\notag\\
&=2a[k b - a b - b - c (x-1) - (x-1)^2]\notag\\
&=2a[b(k - a - 1) - c(x-1) - (x-1)^2] \notag \\
&=2a[(x-1)(b - c - (x-1))] \quad (\mbox {since} \ k-a-1=x-1) \notag \\
&=2a(x-1)(b - c - (x-1)) \notag \\
&=0.  \quad (\mbox {since} \ b - c = (k - y) - (k - x - y + 1) = x - 1)\notag
\end{align}
Hence, we obtain desired result. The proof is complete.
\end{proof}
\end{appendices}

\begin{appendices}
\section{Justification of Eq. (\ref{eq3.16}) in the proof of Theorem \ref{tm3.13}.}\label{appa}
\begin{proof}Recall that
$$\mathcal{F}(x+1,y+1)=\dfrac{4(k-x-1)(k-y-1)-(k-x-y-1)^2}{2(k-x-1)(k-y-1)(k+1)-k(k-x-y-1)^{2}}.$$
To obtain the desired result, we prove that the denominator of $\mathcal{F}(x+1,y+1)$ is strictly greater than 0, under the conditions $k \geq 7$, $x,y \geq 1$ and $ x+y \leq k-1$ (which includes the case $x+y \leq k-\sqrt{k}-2$).

According to the proof of Theorem \ref{tm3.66}, we know that $\mathcal{F}(x+1, y+1)$ is a function of the resistance distance between vertices $u_1$ and $v_1$ in network $N$. With the aid of the non-negative property of resistance distance, it is sufficient to prove that the numerator of $\mathcal{F}(x+1,y+1)$ is positive. That is,
\begin{equation*}\label{eqA}
4(k-x-1)(k-y-1)- (k-x-y-1)^2 > 0.
\end{equation*}
Let $a=k-x-1$, $b=k-y-1$ and
$$\mathcal{H}(a,b)=4ab-(a+b-(k-1))^{2}.$$
In addition, we have $a,b \in [1,k-2]$ and $a+b \in [k-1,2k-4]$. To prove that $\mathcal{H}(a,b) >0,$ it is equivalent to show that
$$ 4ab >  (a+b-(k-1))^{2}.   $$
Since $a+b \geq k-1$, taking square roots (positive roots), we have
$$2\sqrt{ab} > a+b-(k-1),$$
which yields that
\begin{equation}\label{eqB}
(\sqrt{a}-\sqrt{b})^2 < k-1.
\end{equation}
In the following, we show that  Ineq. (\ref{eqB}) holds. Since \( a, b \in [1, k-2] \), the maximum value of the difference \( |\sqrt{a} - \sqrt{b}| \) is attained at \( a = 1 \), \( b = k-2 \) or vice versa, we have
\[
\max |\sqrt{a} - \sqrt{b}| = \sqrt{k-2} - 1
\]
By computing, we have
\[
\max (\sqrt{a} - \sqrt{b})^2 = (\sqrt{k-2} - 1)^2 =k - 1 - 2\sqrt{k-2}.
\]
Therefore, we have
\[
(\sqrt{a} - \sqrt{b})^2 \leq k - 1 - 2\sqrt{k-2}< k-1,
\]
due to $k \geq 7$. The proof is complete.
\end{proof}
\end{appendices}
\end{CJK}

\end{document}